\newtheorem{theorem}{Theorem}
\newtheorem{lemma}[theorem]{Lemma}
\newtheorem{proposition}[theorem]{Proposition}
\newtheorem*{proposition*}{Proposition}
\newtheorem{nclaim}{Claim}[theorem]
\newtheorem*{question}{Question}
\newenvironment{subproof}[1][\proofname]{%
  \begin{proof}[#1]%
}{%
  \end{proof}%
}
\DeclareMathOperator{\Inc}{Inc}
\DeclareMathOperator{\Min}{Min}
\DeclareMathOperator{\Max}{Max}
\DeclareMathOperator{\Up}{U}
\DeclareMathOperator{\Down}{D}
\DeclareMathOperator{\cover}{cover}
\newcounter{subclaimi}[theorem]
\newcommand{\Kelly}[1]{K_{#1}}
\newcommand{\Rom}[1]{\MakeUppercase{\romannumeral #1}}
\title{Posets with \(k\)-outerplanar cover graphs have bounded dimension}
\author[M.~Gorsky]{Maximilian Gorsky}
\address[M.~Gorsky]{Technische Universit{\"a}t Berlin, Germany}
\email{m.gorsky@tu-berlin.de}
\author[M.T.~Seweryn]{Michał T.\ Seweryn}
\address[M.T.~Seweryn]{Theoretical Computer Science Department, 
Faculty of Mathematics and Computer Science, Jagiellonian University, Krak\'ow, Poland}
\email{michal.seweryn@tcs.uj.edu.pl}
\date{\today}
\thanks{
Micha\l{}.T.\ Seweryn is partially supported by a Polish National Science Center grant (BEETHOVEN; UMO-2018/31/G/ST1/03718).
Maximilian Gorsky’s research is supported by the European Research Council
(ERC) under the European Union’s Horizon 2020 research and innovation programme (ERC Consolidator Grant DISTRUCT, grant agreement No 648527).
}
\begin{document}

\begin{abstract}
    In 2015, Felsner, Trotter, and Wiechert showed that posets with outerplanar cover graphs have
    bounded dimension. We generalise this result to posets with \(k\)-outerplanar cover graphs.
    Namely,
    we show that posets with \(k\)-outerplanar cover graph have dimension
    \(\mathcal{O}(k^3)\). As a consequence, we show that every poset with a planar cover graph
    and height \(h\) has dimension \(\mathcal{O}(h^3)\). This improves the previously best known bound
    of \(\mathcal{O}(h^6)\) by Kozik, Micek and Trotter.
\end{abstract}

\maketitle

\section{Introduction}

The \emph{dimension} of a poset \(P\), denoted by
\(\dim(P)\), is the least cardinality of a set
of linear orders on the ground set of \(P\)
whose intersection is the partial order of \(P\).
Dimension can be seen as a measure of poset complexity,
where the simplest posets are linearly ordered sets which
have
dimension \(1\), and the complexity grows with the dimension.

The Hasse diagram of a poset
seen as a simple undirected graph is called its cover graph.
Formally, for two elements \(x\) and \(y\) of a poset
\(P\), we say
that \(y\) \emph{covers} \(x\) when \(x < y\) in \(P\) and
there is no element \(z\) such that \(x < z < y\) in \(P\). The \emph{cover graph}
of \(P\) is a graph
whose vertex set is the ground set of \(P\) in which
two elements are adjacent when one of them covers the other
in \(P\). The cover graph of \(P\) is denoted by \(\cover(P)\).

In this paper we study finite posets with planar cover graphs,
that is posets whose cover graphs admit crossing-free drawings
on the plane (such drawings are called planar).
Note that unlike in Hasse diagrams, we do not put the
restriction  that the curve representing an edge \(x y\) with \(x < y\)
in the poset has to go upward from \(x\) to \(y\).

Intuitively, if the cover graph of a poset has a very simple
structure, then the dimension should not be too large.
There is a number of results which capture this intuition.
For instance, Trotter, and Moore showed that every poset
whose cover graph is a forest, has dimension at most \(3\)%
~\cite{trotter1977dimension}, and Felsner, Trotter, and
Viechert showed that every poset whose cover graph
is outerplanar, has dimension at most \(4\)%
~\cite{felsner2015dimension}.
(A graph is \emph{outerplanar} if it admits a
planar drawing such that all vertices
lie on the exterior face).
On the other hand, a well-known construction by
Kelly gives posets with planar cover graphs of
arbitrarily large dimension~\cite{kelly1981dimension}.
These results raise the following question:
\begin{question}\label{que:bd}
  For which minor-closed classes of graphs \(\mathcal{C}\)
  is it true that posets with cover graphs in \(\mathcal{C}\)
  have dimension bounded by a constant?
\end{question}

The classes of forests and outerplanar graphs do
have the property described in
the question and the class of planar graphs does not.
Other examples of graph classes
enjoying that property
include
graphs of pathwidth at most \(2\)%
~\cite{biro2016posets,wiechert2017}, graphs of
treewidth at most \(2\)%
~\cite{joret2017dimension,seweryn2020improved},
and graphs which exclude a fixed
\(2 \times k\) grid as a minor~\cite{huynh2020excluding}.
It was conjectured in~\cite{huynh2020excluding} that
the answer to the question above are exactly
those classes which exclude
the cover graph of a poset in Kelly's construction.

We show that for a fixed positive integer \(k\),
posets with \(k\)-outerplanar cover graphs
have bounded dimension.
A planar drawing of a graph \( G \) is called \emph{\(1\)-outerplanar} 
(or simply \emph{outerplanar})
if all vertices of \( G \) lie on the exterior face, and for every 
\(k \ge 2\), we recursively define a \emph{\(k\)-outerplanar} drawing
of \(G\) as a planar drawing of \(G\)
such that removing all vertices on the exterior
face (and the edges incident to them) yields a
\((k-1)\)-outerplanar drawing of a subgraph of \(G\).
A \(k\)-outerplanar drawing is always \(\ell\)-outerplanar for \(\ell \ge k\). 
A graph admitting a \(k\)-outerplanar drawing is called 
\emph{\( k \)-outerplanar}. See Figure~\ref{fig:3-outerplanar}.
For every \(k\), the class of \(k\)-outerplanar graphs is minor-closed
(see Lemma~\ref{lem:k-outerplanarity-minor-closed}).
\begin{figure}
    \centering
    \includegraphics{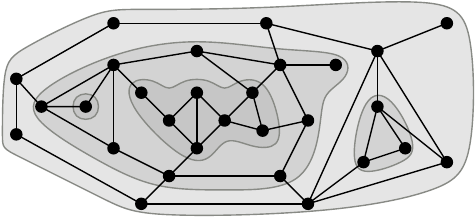}
    \caption{In this graph, after removing the vertices from the exterior face \(3\) times, we are left with no vertices. Thus the drawing and the graph are both \(3\)-outerplanar.}
    \label{fig:3-outerplanar}
\end{figure}

The main result of this paper is the following.

\begin{theorem}\label{thm:dimension-k-outerplanar-cover}
  There exists a function \(f(k) \in \mathcal{O}(k^3)\) such that
  every poset with a \(k\)-outerplanar cover graph,
  has dimension at most \(f(k)\).
\end{theorem}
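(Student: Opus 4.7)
My plan is to proceed by induction on $k$. The base case $k=1$ follows from the theorem of Felsner, Trotter, and Wiechert, giving dimension at most $4$. For the inductive step, let $P$ be a poset whose cover graph $G$ admits a fixed $k$-outerplanar drawing, and let $V_1$ denote the vertex set of the outer face, so that $G - V_1$ is $(k-1)$-outerplanar. The cleanest strategy would be to reduce $\dim(P)$ to $\dim(P')$ for a suitable sub-poset $P'$ whose cover graph is $(k-1)$-outerplanar, plus $\mathcal{O}(k^2)$ extra linear extensions to handle incomparable pairs that interact with $V_1$. This would yield the recursion $f(k) \le f(k-1) + \mathcal{O}(k^2)$, and hence $f(k) \in \mathcal{O}(k^3)$.

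Two obstacles must be addressed. The first is that the cover graph of the induced subposet $P - V_1$ is in general \emph{not} equal to $G - V_1$: new cover relations between inner vertices may be created after removing $V_1$ if those relations were previously mediated by short chains through $V_1$. To handle this I would work with a carefully chosen ``closure'' of $G - V_1$ that represents the shortcuts through $V_1$, and argue that the enriched cover graph still admits a $(k-1)$-outerplanar drawing, leveraging the fact that the new edges connect vertices that were close to the outer boundary of $G - V_1$ in the old drawing. Alternatively, I would work with a richer induction parameter, for instance the lexicographic pair $(k, |V_1|)$, and peel off one vertex of $V_1$ at a time, each time paying a bounded number of linear extensions.

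The second obstacle is to control the incomparable pairs of $P$ that involve at least one vertex of $V_1$, using only $\mathcal{O}(k^2)$ additional linear extensions. Exploiting the cyclic order that $V_1$ inherits from the drawing, an outerplanar-style argument following Felsner--Trotter--Wiechert should reverse all pairs with both endpoints on the outer face using a constant number of extensions. The real difficulty is the mixed pairs, with one endpoint in $V_1$ and the other in some deeper layer $V_j$. For these I would classify each mixed pair $(x,y)$ by a ``signature'' recording how a canonical witnessing path meets the layers $V_1, V_2, \ldots, V_k$ and where it enters and leaves the outer face. A count on the number of realisable signatures, polynomial in $k$, together with a construction showing that all pairs sharing a signature can be reversed by a single well-chosen linear extension, would furnish the desired $\mathcal{O}(k^2)$ bound.

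I expect the main obstacle to be obstacle~(i): turning the intuitive peeling of the outer face into a clean inductive reduction that keeps the $(k-1)$-outerplanar structure intact. If the naive subposet $P - V_1$ is too coarse, a natural refinement is to pass to an auxiliary poset obtained by identifying each vertex of $V_1$ with a pair of ``entry'' and ``exit'' markers attached to its neighbours in the second layer $V_2$, thereby replacing $V_1$ by local ``decorations'' at the boundary of the $(k-1)$-outerplanar drawing without destroying planarity or outerplanarity. Granting this, the inductive hypothesis bounds the dimension of the reduced poset by $f(k-1)$, and the boundary markers are absorbed by the $\mathcal{O}(k^2)$ boundary extensions, closing the induction.
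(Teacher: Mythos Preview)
Your layer-peeling induction is not the paper's approach, and both of your acknowledged obstacles are genuine gaps that you do not close. For obstacle~(1), the cover graph of the induced subposet $P - V_1$ need not be $(k{-}1)$-outerplanar, or even planar: new cover edges do all lie between vertices of $V_2$ (any chain through $V_1$ starts and ends at neighbours of $V_1$), but nothing forces this collection of edges to be embeddable without crossings in the outer region of $G - V_1$. Your ``closure'' and ``entry/exit marker'' fixes are too vague to assess; in particular, replacing outer vertices by local decorations does not obviously preserve both the order-theoretic information needed for dimension and the $(k{-}1)$-outerplanarity of the drawing. For obstacle~(2), the $\mathcal{O}(k^2)$ bound on mixed pairs is unsupported. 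Felsner--Trotter--Wiechert applies to posets whose \emph{cover graph} is outerplanar, not to subposets whose elements happen to lie on the outer face of a larger planar drawing, so even the ``both endpoints in $V_1$'' case is not handled. The signature-counting idea is pure speculation: you give no construction showing that a signature class is reversible, and nothing in the literature suggests such a naive classification works.

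The paper does not induct on $k$. It first reduces, at the cost of a factor $4k$, to bounding $\dim_{P'}(I)$ for a \emph{doubly exposed} set $I$ of min-max pairs in a poset $P'$ with $(k{+}1)$-outerplanar cover graph; this reduction uses min-max reduction, unfolding, and the layering of the drawing to arrange that both $\Min$ and $\Max$ are ``seen'' from the outer face. It then quotes Kozik--Micek--Trotter: $\dim_{P'}(I) \le \rho_{P'}(I)^2$, where $\rho_{P'}(I)$ is the size of the largest standard example in $I$. The new ingredient is a structural lemma showing that, in the doubly exposed setting, a standard example of size $\Omega(n)$ forces a subposet isomorphic to the Kelly poset $K_n$; a short separate argument then shows that $K_{4k+3}$ cannot occur as a subposet when the cover graph is $k$-outerplanar (its cover graph would be a minor containing $k{+}1$ nested cycles). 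Composing gives $\dim(P) \le 4k \cdot (\mathcal{O}(k))^2 = \mathcal{O}(k^3)$ directly, with no recursion on $k$.
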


Recall that a chain in a poset is a linearly ordered subset of elements of that poset and
the height of a poset is the size of a largest chain in that poset.
The following is a consequence of Theorem~\ref{thm:dimension-k-outerplanar-cover}.

\begin{theorem}\label{thm:dimension-cubic-in-height}
  There exists a function \(f(h) \in \mathcal{O}(h^3)\) such that
  every height-\(h\) poset with a planar cover graph,
  has dimension at most \(f(h)\).
\end{theorem}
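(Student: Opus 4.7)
The plan is to deduce Theorem~\ref{thm:dimension-cubic-in-height} directly from Theorem~\ref{thm:dimension-k-outerplanar-cover}. The bridge I would aim to establish is the following structural claim: the cover graph of a poset of height \(h\) that is planar admits a planar drawing which is \(\mathcal{O}(h)\)-outerplanar. Granted this claim, Theorem~\ref{thm:dimension-cubic-in-height} is immediate by plugging \(k = \mathcal{O}(h)\) into Theorem~\ref{thm:dimension-k-outerplanar-cover}, giving \(\dim(P) = \mathcal{O}(h^3)\).

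I would first reduce to the case of connected \(P\), and then fix a planar drawing of \(\cover(P)\) and study the layers \(L_1, L_2, \dots, L_k\) obtained by iteratively peeling vertices from the outer face, where \(k\) is the outerplanar depth of the drawing. To bound \(k\) in terms of \(h\), I would pick a vertex \(v \in L_k\) and consider a path from \(v\) to the outer face that witnesses the depth of the nesting. Since each edge of the path is a cover relation, one tries to extract from such a path a chain of length \(\Omega(k)\) in \(P\), which by the height bound forces \(k = \mathcal{O}(h)\).

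The main obstacle, which is where I expect the bulk of the technical work to lie, is precisely this path-to-chain translation. A shortest path in the cover graph is not itself a chain; it may zigzag freely between minimal and maximal elements, so its length in \(\cover(P)\) is not directly comparable to any chain length in \(P\). Indeed, the naive form of the claim already fails for height \(2\): a large bipartite planar graph such as a grid is the cover graph of a height-\(2\) poset and yet has unbounded outerplanarity. The reduction must therefore use a genuinely topological argument --- for instance, by replacing shortest paths with cleverly chosen radial paths through nested separating cycles in the drawing, or by augmenting \(\cover(P)\) with auxiliary vertical edges along chains before extracting the layering --- so that the topological nesting forced by deep outerplanar layers can be charged against the combinatorial chain length in \(P\). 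Designing this planarity-respecting transformation and showing that each outerplanar layer costs at least one unit of chain length is the crux of the argument; once it is in place, Theorem~\ref{thm:dimension-k-outerplanar-cover} finishes the job mechanically.
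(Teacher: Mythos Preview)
Your high-level strategy is exactly the one the paper uses: reduce to Theorem~\ref{thm:dimension-k-outerplanar-cover} by showing that, after a suitable preprocessing, one lands in a poset whose cover graph is \((2h-1)\)-outerplanar. You also correctly diagnose the obstacle --- the naive claim that \(\cover(P)\) itself is \(\mathcal{O}(h)\)-outerplanar is false, as your grid example shows --- and you correctly locate the missing piece as the ``crux of the argument''. But you have not supplied that crux, and your tentative suggestions (radial paths through nested cycles, auxiliary vertical edges) do not point toward what actually works.

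The paper fills the gap with two standard poset-dimension tools that you have not invoked: the \emph{min-max reduction} (Lemma~\ref{lem:min-max-reduction}) and \emph{unfolding} (Lemma~\ref{lem:unfolding-planar}). First one replaces \(P\) by a poset \(P'\) of the same height with \(\dim(P) \le \dim_{P'}(\Min(P'),\Max(P'))\). Then, via unfolding from a vertex on the exterior face, one passes to a convex subposet \(Q\) with \(\dim_{P'}(\Min(P'),\Max(P')) \le 2\cdot \dim(Q)\) and the crucial extra property that every element of \(\Max(Q)\) (or dually \(\Min(Q)\)) is comparable to some vertex on the exterior face of \(\cover(Q)\). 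It is only \emph{after} this reduction that the height bound bites: any maximal element is now within \(h-1\) cover-edges of layer \(V_1\), hence lies in \(V_1\cup\cdots\cup V_h\), and then every element (being below some maximal) lies in \(V_1\cup\cdots\cup V_{2h-1}\). So \(\cover(Q)\) is \((2h-1)\)-outerplanar and Theorem~\ref{thm:dimension-k-outerplanar-cover} applies. The point is that one does not try to make the \emph{original} cover graph \(\mathcal{O}(h)\)-outerplanar; one first extracts a subposet in which the exterior face ``sees'' all maxima, and only then does the layering argument go through.
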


Let us provide some context for this theorem.
The aforementioned construction of posets by Kelly shows that posets with planar cover graph
can have arbitrarily large dimension. However, Streib, and Trotter~\cite{streib2014dimension}
showed that the dimension of a poset with a planar cover graph is bounded in terms of its height.
That result was later generalised by Joret, Micek, and Wiechert~\cite{joret2018sparsity}, who showed that
actually for every graph class \(\mathcal{C}\) with bounded expansion, there exists a function
\(f_{\mathcal{C}}(h)\) such that \(\dim(P) \le f_{\mathcal{C}}(h)\) for any height-\(h\) poset whose
cover graph belongs to \(\mathcal{C}\). The class of planar graphs does have bounded
expansion, and their proof implied that the dimension of a height-\(h\) poset with a planar cover graph
is \(2^{\mathcal{O}(h^3)}\).
Recently, this bound was improved to a polynomial bound \(\mathcal{O}(h^6)\) by Kozik, Micek, and Trotter~\cite{kozik2019dimension}. 

Theorem~\ref{thm:dimension-k-outerplanar-cover} almost immediately improves the bound to \(\mathcal{O}(h^3)\).
Using two standard techniques called min-max reduction and
unfolding, we can transform a
poset of height \(h\) with a planar cover graph to a poset
with a \((2h-1)\)-outerplanar cover graph whose dimension
is at most two times smaller than the dimension of the original poset.
This combined with the proof of Theorem~\ref{thm:dimension-k-outerplanar-cover} implies that
the dimension of a height-\(h\) poset with a planar cover graph is
\(\mathcal{O}(h^3)\). A detailed proof is presented in Section~\ref{sec:roadmap}.

There is a lot of evidence, that the optimal bound should be \(\mathcal{O}(h)\).
All known constructions of posets with planar cover graphs do have dimension at most linear in
height.
Furthermore, in the special case of posets with planar Hasse diagrams, the dimension is
known to be \(\mathcal{O}(h)\)~\cite{joret2017planar}.
However, proving a linear bound in the general case of all posets with planar cover graphs
seems to be a challenging problem requiring a new approach.

The structure of this paper is as follows. Section~\ref{sec:prelimiaries} provides all necessary
preliminaries concerning posets and \(k\)-outerplanar graphs. In Section~\ref{sec:roadmap}, we formulate
four lemmas which combined give the proof of Theorems~\ref{thm:dimension-k-outerplanar-cover} and~\ref{thm:dimension-cubic-in-height} and we show
how these lemmas imply Theorems~\ref{thm:dimension-k-outerplanar-cover} and~\ref{thm:dimension-cubic-in-height}.
One of these four lemmas is a result from~\cite{kozik2019dimension} and the remaining three
are our contribution. In each of the Sections~\ref{sec:reduction-to-doubly-exposed},~\ref{sec:kelly-subposets}
and~\ref{sec:kelly-and-k-outerplanarity} we prove one of these three lemmas.

\section{Preliminaries}\label{sec:prelimiaries}

We use the notation \([n]\) as a shorthand for
\(\{1, \ldots, n\}\).
All graphs and posets in this paper are finite.

\subsection{Dimension theory}
A linear order \(L\) on the ground set of a poset \(P\) is
a \emph{linear extension} of \(P\) if \(x \le y\) in \(L\)
whenever \(x \le y\) in \(P\).
We denote by \(\Inc(P)\) the set of all ordered pairs
\((a, b)\) of incomparable elements in \(P\).
Hence the dimension of \(P\) is the least cardinality
of a nonempty family \(\mathcal{R}\)
of linear extensions of \(P\), such that
for every \((a, b) \in \Inc(P)\), the family
\(\mathcal{R}\) contains an order with \(b < a\)
(and an order with \(a < b\)).
A linear extension \(L\) of \(P\) \emph{reverses} a pair
\((a, b) \in \Inc(P)\) if \(b < a\) in \(L\) and
a subset \(I \subseteq \Inc(P)\) is \emph{reversible} in
\(P\)
if there exists a linear extension \(L\) of \(P\)
which reverses all pairs in \(I\).
The following is a useful rephrasing of the 
definition of poset dimension.

\begin{proposition}\label{prop:dim}
  If a nonempty poset \(P\) is lineary ordered, then
  \(\dim(P) = 1\). Otherwise, the
  dimension of \(P\) is the least integer
  \(d\) such that\ \(\Inc(P)\) can be partitioned into
  \(d\) sets which are reversible in \(P\).
\end{proposition}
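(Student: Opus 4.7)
The plan is to unpack the definition of dimension given earlier in this subsection and show that it directly translates into the partition formulation. The linearly-ordered case is immediate: if $P$ is linearly ordered, then $\Inc(P) = \emptyset$, and the singleton family consisting of $P$ itself (viewed as a linear order on its ground set) is a nonempty family of linear extensions satisfying the reversal condition vacuously, hence $\dim(P) = 1$.

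For the main case where $P$ is not linearly ordered, I would prove the equality via two inequalities. For $\dim(P) \le d$, starting from a partition $\Inc(P) = I_1 \cup \cdots \cup I_d$ together with linear extensions $L_1, \ldots, L_d$ witnessing the reversibility of each $I_i$, the family $\{L_1, \ldots, L_d\}$ collectively reverses every pair in $\Inc(P)$, so $\dim(P) \le d$ straight from the definition. Note also that this family has at least two extensions, since for any incomparable $a, b$ both $(a,b)$ and $(b,a)$ lie in $\Inc(P)$ and cannot be reversed by the same linear extension; this confirms that $d$ is a well-defined positive integer.

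For the reverse inequality, I would start with a witnessing family $\mathcal{R} = \{L_1, \ldots, L_k\}$ with $k = \dim(P)$. For each $(a,b) \in \Inc(P)$, some $L_i$ reverses $(a,b)$ by the definition of dimension; assigning $(a,b)$ to $I_i$ for the smallest such $i$ yields a partition of $\Inc(P)$ into at most $k$ reversible sets (discarding any empty parts), since each $I_i$ is contained in the set of pairs reversed by $L_i$. This gives $d \le k = \dim(P)$. No step here poses any real obstacle, as the proposition is essentially a reformulation of the definition in combinatorial language; the only incidental observation needed is that any subset of a reversible set is again reversible, which follows immediately from the definition of reversibility.
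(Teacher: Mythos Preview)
Your proof is correct. The paper does not actually supply a proof of this proposition; it is stated as ``a useful rephrasing of the definition of poset dimension'' and left without argument, so your write-up simply spells out what the paper takes as evident.
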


For a poset \(P\)
and a subset  \(I \subseteq \Inc(P)\), we denote by
\(\dim_P(I)\) the least integer \(d\) such that
\(I\) can be partitioned into \(d\) sets which are
reversible in \(P\). In particular,
Proposition~\ref{prop:dim} states that
\[
\dim(P) = \dim_P(\Inc(P))\quad\textrm{if }\dim(P) \ge 2.
\]
Clearly, for \(I_1 \subseteq I_2 \subseteq \Inc(P)\),
we have \(\dim_P(I_1) \le \dim_P(I_2)\).

For a poset \( P \) and a subset \(S\) of its ground set, we define
the \emph{upset} \(\Up_P(S)\) of \(S\)
as the set of all elements \(y\) of \(P\) such that \(x \le y\) in \(P\) for some \(x \in S\) and
we define the \emph{downset} \(\Down_P(S)\) of \( S \) as 
the set of all elements \(x\) of \(P\) such that \(x \le y\) in \(P\) for some \(y \in S\).
For an element \(x \in P\), we write \(\Down_P(x)\) and \(\Up_P(x)\) as shorthands
for \(\Down_P(\{x\})\) and \(\Up_P(\{x\})\), respectively.

When \(P\) and \(Q\) are two posets such that the ground set \(X\) of
\(Q\) is a subset of the ground set of \(P\) and the orders
of the posets \(P\) and \(Q\) agree on \(X\),
we write \(Q \subseteq P\) and
we say that \(Q\) is a \emph{subposet} of \(P\) (\emph{induced by} \(X\))
and \(P\) is a \emph{superposet} of \(Q\).
When \(Q \subseteq P\), then
the restriction of any linear extension of \(P\) to \(Q\) is
a linear extension of \(Q\), so \(\dim(Q) \le \dim(P)\).
Furthermore, for every linear extension \(L\) of \(Q\) there exists a 
linear extension of \(P\) which agrees with \(L\) on the ground set of \(Q\),
and because of that, for any \(I \subseteq \Inc(Q)\) we have \(\dim_Q(I) = \dim_P(I)\).

Given two subsets \(A\) and \(B\) of the ground set of a poset
\(P\), let \(\Inc_P(A, B) = \Inc(P) \cap (A \times B)\)
and let \(\dim_P(A, B) = \dim_P(\Inc_P(A, B))\).
Let \(\Min(P)\) and \(\Max(P)\) denote respectively
the sets of minimal
and maximal elements of \(P\).
We refer to the pairs in \(\Inc_P(\Min(P), \Max(P))\) as the \emph{min-max pairs} of \(P\),
and we call \(\dim_P(\Min(P), \Max(P))\) the \emph{min-max dimension} of \(P\).

The following standard observation allows us to focus on bounding the
min-max dimension.

\begin{lemma}\label{lem:min-max-reduction}
  For every poset \(P\) with \(\dim(P) \ge 2\), there exists a poset \(P'\) such that
  \begin{enumerate}
  \item\label{itm:mm-ii}
  \(\cover(P')\) can be obtained from
  \(\cover(P)\) by adding some degree-\(1\) vertices,
  \item\label{itm:mm-i}
  the height of \(P'\) is equal to the height of
  \(P\), and
  \item\label{itm:mm-iv}
  \(\dim(P) \le \dim_{P'}(\Min(P'), \Max(P'))\).
  \end{enumerate}
\end{lemma}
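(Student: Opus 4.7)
The plan is to construct \(P'\) from \(P\) by grafting a degree-\(1\) element onto each non-extremal element. Formally, for each \(x \in P \setminus \Min(P)\) I introduce a new element \(x^-\), and for each \(x \in P \setminus \Max(P)\) a new element \(x^+\); the poset \(P'\) has \(P\) as a subposet, and additionally \(x\) covers \(x^-\) and \(x^+\) covers \(x\). Every \(x^-\) is then minimal in \(P'\) and every \(x^+\) is maximal, and each such new vertex is adjacent in \(\cover(P')\) only to its parent element in \(P\), so condition~(\ref{itm:mm-ii}) follows.

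For condition~(\ref{itm:mm-i}), observe that in any chain of \(P'\) at most one element can have the form \(x^-\) (namely the minimum of the chain), and at most one can have the form \(x^+\) (the maximum). If the bottom of such a chain is some \(x^-\), then by construction \(x\) is non-minimal in \(P\), so I may replace \(x^-\) by any element of \(P\) strictly below \(x\) to obtain a chain in \(P'\) of the same length; applying the analogous substitution at the top yields a chain of the same length entirely contained in \(P\). Hence the heights of \(P\) and \(P'\) coincide.

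For condition~(\ref{itm:mm-iv}), define a map \(\phi \colon \Inc(P) \to \Inc_{P'}(\Min(P'), \Max(P'))\) by \(\phi(a, b) = (a', b')\), where \(a' = a^-\) if \(a \notin \Min(P)\) and \(a' = a\) otherwise, and symmetrically for \(b'\). Because \(\Up_{P'}(x^-) = \{x^-\} \cup \Up_P(x)\) and \(\Down_{P'}(x^+) = \{x^+\} \cup \Down_P(x)\), incomparability of \(a\) and \(b\) in \(P\) transfers to incomparability of \(a'\) and \(b'\) in \(P'\), so \(\phi\) is well defined. Let \(d = \dim_{P'}(\Min(P'), \Max(P'))\) and fix a partition of \(\Inc_{P'}(\Min(P'), \Max(P'))\) into reversible sets \(I'_1, \dots, I'_d\); pulling back through \(\phi\) gives a partition \(I_1, \dots, I_d\) of \(\Inc(P)\). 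It remains to verify that each \(I_i\) is reversible in \(P\): if \(L'\) is a linear extension of \(P'\) reversing every pair in \(I'_i\) and \(L\) is its restriction to the ground set of \(P\), then for every \((a, b) \in I_i\) the relations \(a' \le a\) and \(b \le b'\) in \(P'\) give \(b \le_{L'} b' <_{L'} a' \le_{L'} a\), hence \(b <_L a\). Applying Proposition~\ref{prop:dim}, which is available since \(\dim(P) \ge 2\), then yields \(\dim(P) \le d\).

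There is no real obstacle in the argument; the only design choice worth flagging is to restrict the grafting to non-extremal vertices, since adding pendants below \(\Min(P)\) or above \(\Max(P)\) would inflate the height and break condition~(\ref{itm:mm-i}).
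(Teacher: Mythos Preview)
Your construction and argument are correct and coincide with the paper's approach: the paper derives Lemma~\ref{lem:min-max-reduction} from Lemma~\ref{lem:min-max-reduction-plus} applied with \(A = B\) equal to the ground set of \(P\), which amounts to exactly the grafting of \(x^-\) below each non-minimal \(x\) and \(x^+\) above each non-maximal \(x\), followed by the same restriction-of-linear-extensions argument you give. One cosmetic remark: the identity \(\Up_{P'}(x^-) = \{x^-\} \cup \Up_P(x)\) is not literally true (it omits the elements \(y^+\) with \(y \ge x\)), but the conclusion you draw from it---that \(a' \le b'\) in \(P'\) forces \(a \le b\) in \(P\)---is correct, so the argument stands.
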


We actually need a slightly generalised version of
Lemma~\ref{lem:min-max-reduction}.
The proof of Lemma~\ref{lem:min-max-reduction}
is a consequence of applying
the following lemma with \(A\) and \(B\) equal to the ground set of \(P\).

\begin{lemma}\label{lem:min-max-reduction-plus}
  Let \(P\) be a poset and let \(A\) and \(B\) be sets of
  elements in \(P\).
  Then there exist a superposet \(P'\) of \(P\) and subsets
  \(A' \subseteq \Min(P')\) and \(B' \subseteq \Max(P')\)
  such that
  \begin{enumerate}
  \item\label{itm:mmp-i}
  \(\cover(P')\) can be obtained from
  \(\cover(P)\) by adding some degree-\(1\) vertices,
  \item\label{itm:mmp-ii}
  the height of \(P'\) is equal to the height of
  \(P\),
  \item\label{itm:mmp-iii}
  \(B' \subseteq \Up_{P'}(B)\) and \(A' \subseteq \Down_{P'}(A)\), and
  \item\label{itm:mmp-iv}
  \(\dim_P(A, B) \le \dim_{P'}(A', B')\).
  \end{enumerate}
\end{lemma}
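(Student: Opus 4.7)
The plan is to construct $P'$ from $P$ by attaching at most one degree-$1$ vertex to each element of $A \cup B$. Concretely, for each $a \in A \setminus \Min(P)$ I add a fresh vertex $\tilde a$ adjacent only to $a$ in the cover graph, and declare in $P'$ that $a$ covers $\tilde a$, so that $\tilde a$ becomes a new minimal element of $P'$. Dually, for each $b \in B \setminus \Max(P)$ I attach a fresh vertex $\tilde b$ adjacent only to $b$, with $\tilde b$ covering $b$ in $P'$. I set $A'$ to consist of the vertices $\tilde a$ for $a \in A \setminus \Min(P)$ together with the elements of $A \cap \Min(P)$ themselves, and define $B'$ symmetrically.

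Conditions~(\ref{itm:mmp-i}) and~(\ref{itm:mmp-iii}) are immediate from the construction. For~(\ref{itm:mmp-ii}), the key observation is that if $a$ is non-minimal in $P$, then every chain of $P$ whose minimum element is $a$ has at most $h - 1$ elements, where $h$ is the height of $P$, because any such chain can be extended by one further element below $a$; prepending $\tilde a$ therefore yields a chain in $P'$ with at most $h$ elements, and the symmetric bound handles chains ending at some $\tilde b$. The only remaining case is a chain of $P'$ passing through both a $\tilde a$ and a $\tilde b$, which contains a subchain from $a$ to $b$ in $P$ with $a$ non-minimal and $b$ non-maximal; such a subchain has at most $h - 2$ elements, because it can be extended by one element below $a$ and one element above $b$ in $P$, so adding $\tilde a$ at the bottom and $\tilde b$ at the top gives a chain of at most $h$ elements.

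For~(\ref{itm:mmp-iv}), I define a map $\phi \colon \Inc_P(A, B) \to \Inc_{P'}(A', B')$ by $\phi(a, b) = (a', b')$, where $a'$ and $b'$ are the representatives of $a$ and $b$ in $A'$ and $B'$. The image lies in $\Inc(P')$ because adding degree-$1$ leaves introduces no new comparabilities between existing elements, and the only comparabilities involving $\tilde a$ in $P'$ are with elements above $a$ and the only ones involving $\tilde b$ are with elements below $b$; any comparability between $a'$ and $b'$ in $P'$ would therefore imply a comparability between $a$ and $b$ in $P$, contradicting $(a, b) \in \Inc_P$. Given any partition $\Inc_{P'}(A', B') = I'_1 \sqcup \ldots \sqcup I'_d$ with $d = \dim_{P'}(A', B')$ into reversible sets, the preimages $I_j := \phi^{-1}(I'_j)$ partition $\Inc_P(A, B)$; each $I_j$ is reversible in $P$, because the restriction to the ground set of $P$ of any linear extension $L'$ of $P'$ that reverses $I'_j$ reverses $I_j$: since $\tilde a < a$ and $b < \tilde b$ in $L'$, the relation $b' <_{L'} a'$ forces $b <_{L'} a$.

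The main obstacle I anticipate is the height bound~(\ref{itm:mmp-ii}). It is not a priori obvious that many simultaneous leaf-additions preserve the height, and the argument relies critically on the fact that each $\tilde a$ sits only below a non-minimal element of $P$ and each $\tilde b$ only above a non-maximal element of $P$, so that the slack already present in the chains of $P$ exactly absorbs the new leaves.
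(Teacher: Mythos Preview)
Your proof is correct and follows exactly the same construction as the paper: attach a new minimal element below each non-minimal $a\in A$ and a new maximal element above each non-maximal $b\in B$, then take $A'$ and $B'$ to be the obvious representatives. Your argument for~\eqref{itm:mmp-iv} via the map $\phi$ and preimages of a reversible partition is the same as the paper's restriction-of-linear-extensions argument, just phrased in the language of partitions rather than directly in terms of the witnessing linear extensions; and you supply a more careful justification of the height bound~\eqref{itm:mmp-ii} than the paper, which simply asserts it.
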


\begin{proof}
Construct a poset \(P'\) from \(P\) by adding
the following new vertices: 
for every \(a \in A \setminus \Min(P)\),
introduce a new minimal element \(a^-\) covered 
only by \(a\), and
for every \(b \in P \setminus \Max(P)\),
introduce a new maximal element \(b^+\) covering
only \(b\).
Furthermore, for each \(a \in A \cap \Min(P)\), let us
denote by \(a^-\) the element \(a\) itself, and
similarly,
for each \(b \in B \cap \Max(x)\), we let \(b^+\) denote
the element \(b\).
Let \(A' = \{a^- : a \in A\}\) and \(B' = \{b^+ : b \in B\}\).
Clearly, the poset \(P'\) satisfies~\eqref{itm:mmp-i} and~\eqref{itm:mmp-ii},
and the sets \(A'\) and \(B'\) satisfy~\eqref{itm:mmp-iii}.

It remains to show that \eqref{itm:mmp-iv} is satisfied.
Let \(d = \dim_{P'}(A', B')\) and
let \(L_1'\), \ldots, \(L_d'\) be linear extensions of \(P'\)
reversing all pairs from \(\Inc_{P'}(A', B')\).
For every pair \((a, b) \in \Inc_P(A, B)\), we have
\((a^-, b^+) \in \Inc_{P'}(A', B')\),
and if \(L_i'\) reverses \((a^-, b^+)\),
then we have \(b \le b^+ < a^- \le a\) in \(L_i'\),
so \(L_i'\) reverses  \((a, b)\) as well.
Therefore, the restrictions of
\(L_1'\), \ldots, \(L_d'\) to the ground set of \(P\)
reverse all pairs in \(\Inc_P(A, B)\).
Hence \( \dim_P(A, B) \le d\),
so the requirement of~\eqref{itm:mmp-iv} is fulfilled.
\end{proof}

Given two linear orders \( L_1 \) and \( L_2 \) on disjoint
ground sets \( S_1 \) and \( S_2 \), we write \( [ L_1 < L_2 ] \) for the linear order on \( S_1 \cup S_2 \) in which \( x < y \) is true if \( x < y \) holds in \( L_i \), for an \( i \in \{ 1,2 \} \), or if \( x \in S_1 \) and \( y \in S_2 \).
This notation extends in the intuitive way to a larger number of linear orders:
we denote by \([L_1 < \cdots < L_k]\) the order \([L_1 < [L_2 < \cdots < [L_{k-1} < L_k] \cdots]]\).

A poset is \emph{connected} if its cover graph is connected.
A \emph{component} of a poset \(P\) is a subposet of \(P\)
induced by the vertex set of a graph-theoretic component of
\(\cover(P)\).

\begin{lemma}
\label{lem:connected}
Every poset \(P\) with \(\dim(\Min(P), \Max(P)) \ge 3\) has a component \(Q\) such that
\[
  \dim_P(\Min(P), \Max(P)) = \dim_Q(\Min(Q), \Max(Q)).
\]
\end{lemma}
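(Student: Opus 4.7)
The plan is to take $Q$ to be a component of $P$ maximising $\dim_Q(\Min(Q), \Max(Q))$ and show equality with $\dim_P(\Min(P), \Max(P))$. Write $Q_1, \ldots, Q_m$ for the components of $P$, and set $d = \dim_P(\Min(P), \Max(P))$ and $d_i = \dim_{Q_i}(\Min(Q_i), \Max(Q_i))$. Since elements of distinct components of $P$ are automatically incomparable, one has $\Min(Q_i) = \Min(P) \cap Q_i$, $\Max(Q_i) = \Max(P) \cap Q_i$, and $\Inc_{Q_i}(\Min(Q_i), \Max(Q_i)) \subseteq \Inc_P(\Min(P), \Max(P))$. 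Combining the identity $\dim_{Q_i}(I) = \dim_P(I)$ from the preliminaries with monotonicity of $\dim_P$ then immediately yields $d_i \le d$; the real work is in the reverse inequality $d \le \max_i d_i$.

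To prove the reverse inequality, I will exploit that elements of distinct components are comparability-free in $P$: concatenating linear extensions of the components in any order produces a linear extension of $P$, and a cross-component min-max pair $(a, b)$ with $a \in Q_i$, $b \in Q_j$, $i \neq j$, is reversed by such a concatenation precisely when $Q_j$ comes before $Q_i$ in the ordering. If $m = 1$ the lemma is trivial, so assume $m \ge 2$ and set $d^* = \max_i d_i$. For each $i$, fix a partition $\Inc_{Q_i}(\Min(Q_i), \Max(Q_i)) = I_{i,1} \cup \cdots \cup I_{i,d^*}$ into reversible subsets (padded with empty pieces if $d_i < d^*$) together with witness linear extensions $L_{i,j}$ of $Q_i$. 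Now choose permutations $\pi_1, \ldots, \pi_{d^*}$ of $[m]$ such that for every ordered pair $(i, j)$ with $i \neq j$, some $\pi_k$ places $j$ before $i$; for $d^* \ge 2$ this is achieved by taking $\pi_1$ to be the identity and $\pi_2$ the reverse, with the remaining $\pi_k$ arbitrary. Setting $\mathcal{L}_j = [L_{\pi_j(1), j} < \cdots < L_{\pi_j(m), j}]$ gives $d^*$ linear extensions of $P$ whose combined reversed pairs include every within-component pair and every cross-component min-max pair, so $d \le \max(d^*, 2)$.

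The hypothesis $d \ge 3$ then rules out $d^* \le 1$, so $d^* \ge 2$ and hence $d = d^*$; any component attaining $d^*$ serves as the desired $Q$. The main conceptual step is realising that $d^*$ linear extensions of $P$ suffice even though cross-component pairs seem to demand extra ones: the key is that the partitions witnessing $d^*$ within each component already provide enough slots to interleave two different component orderings. The hypothesis $d \ge 3$ enters precisely here, since without it one could have $d^* \le 1$ and the cross pairs alone would force a second linear extension, preventing the bound $d \le \max(d^*, 2)$ from collapsing to $d = d^*$.
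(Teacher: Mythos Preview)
Your proof is correct and follows the same approach as the paper's: concatenate per-component linear extensions using two opposite orderings of the components to reverse all cross-component min--max pairs, the paper framing this by contradiction (assume every $d_i \le d-1$ and build $d-1$ extensions) while you argue directly via $d \le \max(d^*,2)$. One small circularity to clean up: your $d^*$-extension construction only makes sense once $d^* \ge 2$, so you should dispose of the case $d^* \le 1$ first (two opposite concatenations then give $d \le 2 < 3$) rather than invoking the as-yet-unproven bound $d \le \max(d^*,2)$ to rule it out.
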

\begin{proof}
Let \( d = \dim_P(\Min(P), \Max(P))\) and let \( Q_1 ,\ldots , Q_k \) be the components of \( P \).
Clearly, \(\dim_{Q_i}(\Min(Q_i), \Max(Q_i)) \leq d \), for all \( i \in [k] \).
Therefore, 
it remains to show that for some \(i \in [k]\) we have \(\dim_{Q_i}(\Min(Q_i), \Max(Q_i)) \ge d\).
For the sake of contradiction, suppose that it is not the case.
Then for each \(i \in [k]\) there exist linear extensions \( L_i^1 , \ldots , L_i^{d-1}\) of \(Q_i\) reversing all pairs from \( \Inc_{Q_i}(\Min(Q_i), \Max(Q_i))\).
Note that for any \( i \), we do not require the linear extensions \( L_i^1 , \ldots , L_i^{d-1} \) to be pairwise distinct.
We define \( L^1 = [ L_k^1 < L_{k-1}^1 < \cdots < L_1^1 ] \) and for each \( j \in \{ 2, \ldots , d -1\} \), we set \( L^j = [ L_1^j < L_2^j < \cdots < L_k^j ] \).

Every pair \((a, b) \in \Inc_P(\Min(P), \Max(P))\) with \(a\) and \(b\) lying in distinct components, is reversed in \(L^1\) or \(L^2\), since \( L^1 \) and \( L^2 \) order the components of \( P \) in opposing fashion.
Furthermore, a pair \((a, b) \in \Inc_{Q_i}(\Min(Q_i), \Max(Q_i))\) which is reversed in \(L_i^j\), is reversed in \(L^j\).
Thus, the linear extensions \(L^1\), \ldots, \(L^{d-1}\) reverse all pairs from
\(\Inc_P(\Min(P), \Max(P))\), so  \( \dim_P(\Min(P), \Max(P)) \le d - 1\),
a contradiction.
\end{proof}

Let \(P\) be a connected poset and
let \(x_0 \in \Min(P)\).
Let \(A_0 = \{x_0\}\), \(B_1 = \Up_P(x_0) \cap \Max(P)\),
and for every positive integer \(i\), define inductively
\begin{align*}
  A_i = (\Down_P(B_i) \cap \Min(P)) \setminus A_{i-1},
  \quad\text{and}\quad
  B_{i+1} = (\Up_P(A_{i}) \cap \Max(P)) \setminus B_{i}.
\end{align*}
\begin{figure}[b]
    \centering
    \includegraphics{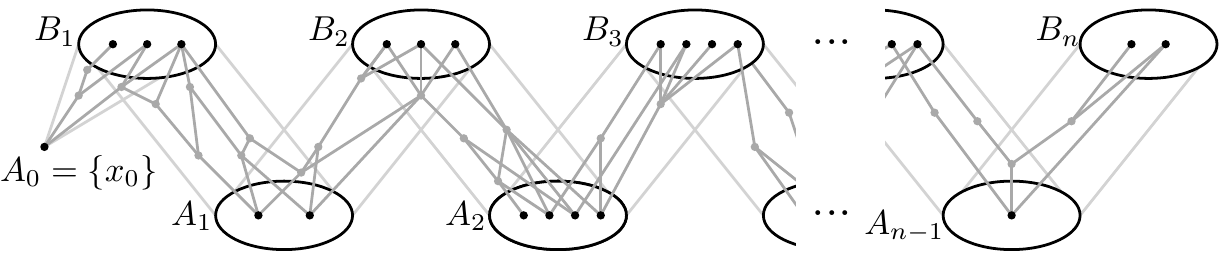}
    \caption{An example of an unfolding, with \( n \) being the greatest index such that \( A_n \cup B_n \neq \emptyset \).}
    \label{fig:unfolding}
\end{figure}
The alternating sequence of
sets \((A_0, B_1, A_1, B_2, \ldots)\) is called
the \emph{unfolding} of \(P\) from \(x_0\).
Since \(P\) is connected, the sets \(A_0\), \(A_1\), \ldots\
partition the set \(\Min(P)\), and the sets
\(B_1\), \(B_2\), \ldots\ partition \(\Max(P)\).
Note that since \( P \) is finite there are only finitely
many nonempty sets in any unfolding.
The following lemma is well-known.
\begin{lemma}\label{lem:unfolding}
  Let \(P\) be a connected poset with \(\dim(\Min(P), \Max(P)) \ge 2\)
  and let \((A_0, B_1, A_1, B_2, \ldots)\) be an unfolding
  of \(P\) from a vertex \(x_0 \in \Min(P)\).
  Then there exists a positive integer \(i\) such that
  \begin{align*}
  \dim_P(\Min(P), \Max(P)) &\le 2\cdot\dim_P(A_i, B_i) \textrm{ or}\\
  \dim_P(\Min(P), \Max(P)) &\le 2\cdot\dim_P(A_i, B_{i+1}).
  \end{align*}
\end{lemma}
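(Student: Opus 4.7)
The plan is to partition $\Inc_P(\Min(P), \Max(P))$ into two classes according to the unfolding indices, and to bound $\dim_P$ of each class by a single quantity appearing in the statement. The structural fact I would establish first is: if $a \in A_i$, $b \in B_j$, and $a \le b$ in $P$, then $j \in \{i, i+1\}$. This follows from the recursion $B_{i+1} = (\Up_P(A_i) \cap \Max(P)) \setminus B_i$, which forces $\Up_P(A_i) \cap \Max(P) \subseteq B_i \cup B_{i+1}$, combined with the partition property of the $B_j$'s.

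Using this, define
\[
I^{\downarrow} = \{(a,b) \in \Inc_P(\Min(P), \Max(P)) : a \in A_i, b \in B_j, j \le i\},
\]
and $I^{\uparrow} = \Inc_P(\Min(P), \Max(P)) \setminus I^{\downarrow}$. The lemma reduces to showing
\[
\dim_P(I^{\downarrow}) \le \max_i \dim_P(A_i, B_i) \quad\text{and}\quad \dim_P(I^{\uparrow}) \le \max_i \dim_P(A_i, B_{i+1}),
\]
since adding these gives a bound of $2\max_i \max(\dim_P(A_i, B_i), \dim_P(A_i, B_{i+1}))$ on $\dim_P(\Min(P), \Max(P))$, and the index realizing the outer max provides one of the two alternatives in the conclusion.

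For the first inequality (the second is symmetric), set $d = \max_i \dim_P(A_i, B_i)$, and for each $i$ decompose $\Inc_P(A_i, B_i) = R_{i,1} \sqcup \cdots \sqcup R_{i,d}$ with each $R_{i,k}$ reversible in $P$. Set $R_k' = \bigcup_i R_{i,k}$ for $k \ge 2$, and $R_1' = \bigcup_i R_{i,1} \cup \{(a,b) \in I^{\downarrow} : a \in A_i, b \in B_j, j < i\}$. These partition $I^{\downarrow}$, so it suffices to show each $R_k'$ is reversible. I would use the standard alternating cycle characterization of reversibility. Suppose $((a_1, b_1), \ldots, (a_n, b_n))$ is an alternating cycle in $R_k'$ with $a_l \in A_{i_l}$ and $b_l \in B_{j_l}$. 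The cycle condition $a_l \le b_{l+1}$ combined with the structural fact gives $j_{l+1} \in \{i_l, i_l+1\}$. A \emph{same-index} pair at position $l+1$ (with $j_{l+1} = i_{l+1}$) then contributes $i_{l+1} - i_l \ge 0$, while a \emph{decreasing-index} pair (with $j_{l+1} < i_{l+1}$) contributes $i_{l+1} - i_l \ge 1$. Summing telescopically around the cycle gives $0$, forcing all increments to vanish and decreasing-index pairs to be absent. All the $i_l$ therefore coincide at some $i^*$, placing the cycle inside $R_{i^*, k}$ and contradicting its reversibility.

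The main obstacle is this alternating-cycle counting argument, but once the structural observation on comparable pairs is in hand it closes cleanly. The case of $I^{\uparrow}$ proceeds identically with sign-flipped increments: for pairs in $\Inc_P(A_i, B_{i+1})$ one gets $i_{l+1} - i_l \le 0$, and for pairs with $j > i+1$ one gets $i_{l+1} - i_l \le -1$, yielding the same contradiction and landing the cycle inside a single $\Inc_P(A_{i^*}, B_{i^*+1})$ piece.
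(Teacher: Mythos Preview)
Your argument is correct and complete: the structural fact $j\in\{i,i+1\}$ for comparable pairs is right, and the telescoping alternating-cycle count does force any hypothetical cycle in $R_k'$ into a single $R_{i^*,k}$, giving the desired contradiction. The symmetric argument for $I^{\uparrow}$ works the same way.

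Your route, however, differs from the paper's. The paper never invokes alternating cycles (indeed, that characterisation is deliberately omitted from the preliminaries). Instead it builds the required linear extensions \emph{constructively}: it partitions the ground set of $P$ into blocks $X_i=\Up_P(A_i)\setminus\Up_P(A_{i+1})$, takes $d$ linear extensions of each $X_i$ reversing $\Inc_P(A_i,B_i)$, and concatenates them as $[L^0<L_j^1<\cdots<L_j^n]$; the block ordering automatically reverses all cross-block pairs in $I^{\downarrow}$, while the within-block extensions handle the rest. A dual partition into $Y_i=\Down_P(B_i)\setminus\Down_P(B_{i+1})$, concatenated in the opposite order, takes care of $I^{\uparrow}$. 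What the paper gains is self-containment (no appeal to the alternating-cycle criterion) and an explicit description of the witnessing realiser. What your approach gains is brevity and a cleaner reason \emph{why} the bound holds: the telescoping sum makes the obstruction to mixing indices transparent, and it avoids having to verify that the block concatenations are genuine linear extensions of $P$.
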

\begin{proof}
Let \(n\) be the greatest integer such that \(A_n \cup B_n \neq \emptyset\).
Let \(d\) denote the maximum of the values \(\dim_P(A_i, B_i)\) and
\(\dim_P(A_i, B_{i+1})\) over all positive integers \(i\).
We need to show that \(\dim(P) \le 2d\).

For each \(i \in \{0, \ldots, n\}\), let \(X_i\) denote the subposet of \(P\) induced by
\(\Up_P(A_{i}) \setminus \Up_P(A_{i+1})\).
The ground sets of \(X_0\), \ldots, \(X_n\) partition the ground set of \(P\), and by definition of \(d\) we have
\(\dim_{X_i}(A_i, B_i) = \dim_P(A_i, B_i) \le d\) for each \(i \in \{1, \ldots, n\}\).
Let \(L^0\) be any linear extension of \(X_0\), 
and for each \( i \in \{1, \ldots, n\} \), consider \(d\) linear extensions \(L_1^i, \ldots , L_d^i\) of \( X_i \), which together reverse all pairs of \(\Inc_P(A_i, B_i)\).
For \( j \in [d] \), let \( L_j = [ L^0 < L_j^1 < \cdots L_j^n] \).
The orders \( L_1, \ldots , L_d \) are linear extensions of \(P\) which together reverse all pairs \( (a,b) \in \Inc_P(\Min(P), \Max(P))\) with \( a \in A_i \) and \( b \in B_j \) for \( j \le i \).

To reverse the remaining pairs, for each \(i \in \{1, \ldots, n\}\), let \( Y_i \) denote the subposet of \(P\) induced by \( \Down_P(B_i) \setminus \Down_P(B_{i+1}) \) in \( P \).
The ground sets of \(Y_1\), \ldots, \(Y_n\) partition the ground set of \(P\) and by definition of \(d\)
we have \(\dim_{Y_{i+1}}(A_{i}, B_{i+1}) = \dim_P(A_{i}, B_{i+1}) \le d\).
Let \(M^1\) be any linear extension of \(Y^1\) and for each
\(i \in \{2, \ldots, n\}\), consider \(d\) linear extensions \(M_1^i, \ldots , M_d^i\) of \( Y_i \), which together reverse all pairs of \(\Inc_P(A_{i-1},B_{i})\).
For \( j \in [d] \), let \( M_j = [ M_j^{k-1} < \cdots < M_j^2 < M^1] \).
The orders \( M_1, \ldots , M_d \) are linear extensions of \(P\) which together reverse all pairs \( (a, b) \in \Inc_P(\Min(P), \Max(P))\) with \( a \in A_i \) and \( b \in B_j \) for \( j \geq i + 1 \).
As a consequence, the linear extensions \( L_1, \ldots , L_d, M_1, \ldots , M_d \) of \(P\)
together reverse all pairs in
\(\Inc(\Min(P), \Max(P))\) and thus \(\dim_P(\Min(P), \Max(P)) \leq 2d\).
\end{proof}

A subposet \(Q\) of a poset \(P\) is \emph{convex}
if, whenever \(x < y < z\) in \(P\) and both \(x\) and \(z\) belong to \(Q\),
\(y\) belongs to \(Q\) too. If \(Q\) is a convex subposet of \(P\),
then the cover graph of \(Q\) is an induced subgraph of
\(P\). 

The notion of unfolding allows us to reduce a connected poset \(P\) to a convex subposet of the form
\(\Up_P(A_i) \cap \Down_P(B_i)\) or \(\Up_P(A_i) \cap \Down_P(B_{i+1})\) whose min-max dimension
is at most \(2\) times smaller. The key properties of such a subposet \(Q\) are captured by the following lemma.
\begin{lemma}\label{lem:unfolding-SQ}
  Let \(P\) be a connected poset with \(\dim_P(\Min(P), \Max(P)) \ge 2\) and let \(x_0 \in \Min(P)\). Then there exist convex subposets
  \(S\) and \(Q\) of \(P\) with \(\Min(Q) \subseteq \Min(P)\) and \(\Max(Q) \subseteq \Max(P)\) such that
  \begin{enumerate}
      \item\label{itm:S-comp} \(S\) is a component of \(P - Q\) containing \(x_0\),
      \item\label{itm:dim-P-Q} \(\dim_P(\Min(P), \Max(P)) \le 2 \cdot \dim_Q(\Min(Q), \Max(Q)))\), and
      \item\label{itm:either} either \(\Max(Q) \subseteq \Up_P(S)\) and \(\Down_P(S) \cap Q = \emptyset\), or
        \(\Min(Q) \subseteq \Down_P(S)\) and \(\Up_P(S) \cap Q = \emptyset\).
  \end{enumerate}
\end{lemma}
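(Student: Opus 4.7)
The plan is to apply Lemma~\ref{lem:unfolding} to $P$ with the unfolding $(A_0, B_1, A_1, B_2, \ldots)$ from $x_0$, obtaining an index $i \ge 1$ for which
\[
\dim_P(\Min(P), \Max(P)) \le 2\dim_P(A_i, B_i)
\quad\text{or}\quad
\dim_P(\Min(P), \Max(P)) \le 2\dim_P(A_i, B_{i+1})
\]
holds. The two possibilities are up-down symmetric and yield, respectively, the first and second alternatives of condition~\eqref{itm:either}; I will describe the first.

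In the first case, set $\tilde B_i := B_i \cap \Up_P(A_i)$ and define $Q$ to be the subposet of $P$ induced by $\Up_P(A_i) \cap \Down_P(\tilde B_i)$. Being the intersection of an upset with a downset, this set is convex in $P$; and using that $A_i \subseteq \Min(P)$ and $\tilde B_i \subseteq \Max(P)$, one checks directly that $\Min(Q) = A_i$ and $\Max(Q) = \tilde B_i$, yielding both required containments. For the dimension bound I would prove the equality $\dim_P(A_i, B_i) = \dim_P(A_i, \tilde B_i)$ by the following modification: starting from a family of linear extensions of $P$ reversing $\Inc_P(A_i, \tilde B_i)$, replace one of them by the concatenation of a linear extension of the downset $\Down_P(B_i \setminus \tilde B_i)$ (placed first) with the restriction of the original to the complementary part of $P$; this is a valid linear extension of $P$ because $\Down_P(B_i \setminus \tilde B_i)$ is disjoint from $\Up_P(A_i)$ by the very definition of $\tilde B_i$, so $A_i$ and $\tilde B_i$ keep their original order, and it additionally reverses every pair in $A_i \times (B_i \setminus \tilde B_i)$.

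Let $S$ be the induced subposet of $P$ on the connected component of $x_0$ in $\cover(P) - V(Q)$; that $x_0 \notin V(Q)$ follows from the disjointness $A_0 \cap A_i = \emptyset$ for $i \ge 1$. For the condition $\Max(Q) \subseteq \Up_P(S)$, each $b \in \tilde B_i$ lies above some $a' \in A_{i-1}$ by the unfolding definition, and one exhibits a cover-graph path from $x_0$ to $a'$ through representatives of $B_1, A_1, \ldots, B_{i-1}, A_{i-1}$ avoiding $V(Q)$: by induction on the unfolding layer, any intermediate vertex $c$ on this path satisfies $c \le b_k$ for some $b_k \in B_k$ with $k \le i - 1$, and if $c$ were in $\Up_P(A_i)$ then a witness $a \in A_i$ would satisfy $a \in \Down_P(B_k) \cap \Min(P) \subseteq A_0 \cup \cdots \cup A_k$, contradicting $A_i \cap (A_0 \cup \cdots \cup A_k) = \emptyset$. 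The remaining condition $\Down_P(S) \cap Q = \emptyset$ is the main obstacle: a careful analysis of a saturated cover chain $q = p_0 \lessdot p_1 \lessdot \cdots \lessdot p_m = s$ with $q \in V(Q)$ and $s \in S$ would show that the first $p_k$ outside $V(Q)$ must lie in $\Up_P(A_i) \setminus \Down_P(\tilde B_i)$, and a subsequent structural argument (tracing how such an exit vertex connects back to $x_0$ in $\cover(P) - V(Q)$) yields the required contradiction. The symmetric case with $\dim_P(A_i, B_{i+1})$ is handled via the dual construction (with $Q$ built from $B_{i+1}$ and $\tilde A_i := A_i \cap \Down_P(B_{i+1})$) and produces the second alternative of~\eqref{itm:either}.
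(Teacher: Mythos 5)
Your overall route is the paper's: invoke Lemma~\ref{lem:unfolding}, take \(Q=\Up_P(A_i)\cap\Down_P(B_i)\) (your \(\Up_P(A_i)\cap\Down_P(\tilde B_i)\) is the same set), and the dimension bound (essentially re-proving Lemma~\ref{lem:dimAB}) together with the argument that \(\Max(Q)\subseteq\Up_P(S)\) via a path through representatives of \(B_1,A_1,\ldots,A_{i-1}\) avoiding \(Q\) are correct. The genuine gap is exactly the point you yourself flag as ``the main obstacle'': the condition \(\Down_P(S)\cap Q=\emptyset\) is never proved. ``A careful analysis \ldots would show'' and ``a subsequent structural argument \ldots yields the required contradiction'' are placeholders, not arguments, and this condition is the substance of item~\eqref{itm:either}; moreover your sketch points in an unhelpful direction, since the first vertex of the cover chain outside \(V(Q)\) need not belong to \(S\) at all. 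Two further obligations hinge on the same missing analysis and are not addressed: that your \(S\), defined as a component of the induced graph \(\cover(P)-V(Q)\), really is a component of the poset \(P-Q\) (deleting \(Q\) can create new cover relations, so \(\cover(P-Q)\) may have edges absent from \(\cover(P)-V(Q)\)), and that \(S\) is convex, which the lemma explicitly requires. The symmetric second case is likewise only asserted; note it is not literally dual, since \(x_0\) remains a minimal element of \(P\) in both cases.

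What is missing can be supplied by the paper's explicit description \(S=\Down_P(B_1\cup\cdots\cup B_i)\setminus Q\). One first checks, by a short layer computation using that the sets \(A_0,A_1,\ldots\) are pairwise disjoint and that \(\Down_P(B_k)\cap\Min(P)\subseteq A_{k-1}\cup A_k\), that no element of \(\Down_P(B_1\cup\cdots\cup B_i)\setminus Q\) is comparable in \(P\) with any element outside \(\Down_P(B_1\cup\cdots\cup B_i)\cup Q\); hence the component of \(x_0\) in \(\cover(P)-V(Q)\) is contained in \(\Down_P(B_1\cup\cdots\cup B_i)\). Then \(\Down_P(S)\cap Q=\emptyset\) follows: if \(q\in Q\) and \(q\le s\) with \(s\in S\), pick \(a\in A_i\) with \(a\le q\) and \(b_k\in B_k\), \(k\le i\), with \(s\le b_k\); then \(a\in\Down_P(B_k)\cap\Min(P)\subseteq A_{k-1}\cup A_k\) forces \(k=i\), whence \(s\in\Up_P(A_i)\cap\Down_P(B_i)=Q\), a contradiction. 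The same comparability analysis across the boundary of \(S\) shows that \(S\) is a full component of \(P-Q\) (no new cover relation of \(P-Q\) can leave \(S\)), and convexity of \(S\) then follows from \(\Down_P(S)\cap Q=\emptyset\). Without some version of these computations the proposal establishes only part of the lemma.
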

\begin{proof}
  Let \(d = \dim_P(\Min(P), \Max(P))\)
  and let \((A_0, B_1, A_1, B_2, \ldots)\) be the unfolding of \(P\) from \(x_0\).
  By Lemma~\ref{lem:unfolding}, there exists a positive integer \(i\)
  such that either \(d \le 2 \cdot \dim_P(A_i, B_i)\), or
  \(d \le 2 \cdot \dim_P(A_i, B_{i+1})\).
  In the former case, let \(Q = \Up_P(A_i) \cap \Down_P(B_i)\)
  and in the latter case, let \(Q = \Up_P(A_i) \cap \Down_P(B_{i+1})\).
  We have \(\dim_P(\Min(P), \Max(P)) = d \le 2 \cdot \dim_Q(\Min(Q), \Max(Q))\),
  so~\eqref{itm:dim-P-Q} holds. It is easy to verify that
  \(S = \Down_P(B_1 \cup \cdots \cup B_i) \setminus Q\)
  satisfies~\eqref{itm:S-comp} and~\eqref{itm:either}.
\end{proof}

If \(x_0\), \ldots, \(x_k\) are elements of a poset \(P\) such that
\(x_i\) covers \(x_{i-1}\) for each \(i \in [k]\),
then these elements induce a path in \(\cover(P)\), which we call a
\emph{witnessing path} (\emph{from \(x_0\) to \(x_k\)}) in \(P\).
Since the cover graph can be derived from a Hasse diagram,
for any pair of elements \(x\) and \(y\) with \(x \le y\) in \(P\)
there exists a witnessing path from \(x\) to \(y\) in \(P\).

The following is a variant of Lemma~\ref{lem:unfolding-SQ} suited for posets with planar cover graphs.

\begin{lemma}\label{lem:unfolding-planar}
  Let \(P\) be a poset with \(\dim_P(\Min(P), \Max(P)) \ge 3\) and
  with a fixed planar drawing of its cover graph.
  Then there exists a convex subposet \(Q\) of \(P\) with
  \[\dim_P(\Min(P), \Max(P)) \le 2 \cdot \dim_Q(\Min(Q), \Max(Q))\]
  such that if \(V_1\) denotes the set of vertices lying on the exterior face in the
  induced drawing of the cover graph of \(Q\), then
  either \(\Max(Q) \subseteq \Up_Q(V_1)\), or \(\Min(Q) \subseteq \Down_Q(V_1)\).
\end{lemma}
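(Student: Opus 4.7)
The plan is to reduce to a connected poset, then adjoin an auxiliary minimum \(x_0\) on the exterior face of the drawing so that the unfolding of Lemma~\ref{lem:unfolding-SQ} is rooted at a vertex visible from infinity. The resulting convex subposet \(Q\) will then automatically satisfy the required condition on \(V_1\), via a witnessing-path argument.

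By Lemma~\ref{lem:connected} we may replace \(P\) by one of its components \(P'\) without changing \(\dim_P(\Min(P),\Max(P))\). We pick any vertex \(v\) of \(P'\) lying on the exterior face of the induced planar drawing of \(\cover(P')\), and let \(\tilde P\) be the poset obtained from \(P'\) by adjoining a new element \(x_0\) with \(x_0 < y\) in \(\tilde P\) if and only if \(v \le y\) in \(P'\); we extend the drawing by placing \(x_0\) in the exterior face adjacent to \(v\), joined to \(v\) by a single edge drawn inside that face. Then \(\tilde P\) is connected, the extended drawing is planar, and \(x_0\) lies on the exterior face of the drawing of \(\cover(\tilde P)\). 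A short verification — using that \(x_0 < v\) in every linear extension of \(\tilde P\) — yields
\[\dim_{\tilde P}(\Min(\tilde P), \Max(\tilde P)) \ge \dim_{P'}(\Min(P'), \Max(P')),\]
so Lemma~\ref{lem:unfolding-SQ} applied to \(\tilde P\) with the minimal element \(x_0\) produces convex subposets \(S\) and \(Q\) of \(\tilde P\). Since \(S \cap Q = \emptyset\) and \(x_0 \in S\), we have \(x_0 \notin Q\), so \(Q\) is contained in and convex in \(P\), and combining Lemma~\ref{lem:unfolding-SQ}\eqref{itm:dim-P-Q} with the inequality above gives \(\dim_P(\Min(P),\Max(P)) \le 2 \cdot \dim_Q(\Min(Q),\Max(Q))\).

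It remains to verify the condition on \(V_1\). The key geometric observation is that \(S\) lies entirely in the exterior face of the induced drawing of \(\cover(Q)\): indeed, \(S\) is connected in \(\cover(\tilde P)\) and disjoint from \(\cover(Q)\), so it sits in a single face of the induced drawing of \(\cover(Q)\); and since \(x_0 \in S\) is separated from infinity by no vertex or edge of \(\cover(Q)\), that face must be the exterior one. Consider now the first case of Lemma~\ref{lem:unfolding-SQ}\eqref{itm:either}, in which \(\Max(Q) \subseteq \Up_{\tilde P}(S)\). For each \(b \in \Max(Q)\) we pick \(a \in S\) with \(a \le b\) in \(\tilde P\) together with a witnessing path from \(a\) to \(b\), and let \(w\) be the first vertex on this path lying in \(Q\). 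Then \(w\) is adjacent in \(\cover(\tilde P)\) to some vertex of \(S\); since that neighbour lies in the exterior face of \(\cover(Q)\)'s drawing, \(w\) lies on the boundary of that face, and so \(w \in V_1\). The tail of the witnessing path from \(w\) to \(b\) stays in \(Q\) by convexity, whence \(w \le b\) in \(Q\) and \(b \in \Up_Q(V_1)\); the dual case of Lemma~\ref{lem:unfolding-SQ}\eqref{itm:either} gives \(\Min(Q) \subseteq \Down_Q(V_1)\) symmetrically. The main obstacle — and the reason for introducing the auxiliary vertex \(x_0\) — is precisely this geometric step: without an anchor on the exterior face, \(S\) could end up inside an interior face of \(\cover(Q)\)'s drawing and the condition on \(V_1\) could fail.
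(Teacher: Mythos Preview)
Your proposal is correct and follows essentially the same approach as the paper: reduce to a connected component, adjoin a new minimal element on the exterior face, apply Lemma~\ref{lem:unfolding-SQ}, and use the fact that the component \(S\) sits in the exterior face of \(\cover(Q)\) to force the first vertex of \(Q\) on a witnessing path into \(V_1\). The only minor difference is that the paper attaches the new minimal element below a \emph{non-minimal} vertex \(z\) on the exterior face (which it observes always exists), so that \(\Min(P')\subseteq\Min(\tilde P)\) and the min-max dimension comparison is immediate; you attach \(x_0\) below an arbitrary exterior vertex \(v\) and compensate with the short ``\(x_0<v\) in every linear extension'' argument, which works just as well.
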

\begin{proof}
By Lemma~\ref{lem:connected},
  after replacing \(P\) with its component of the same min-max dimension
  we may assume that \(P\) is connected.
  For any edge lying on the exterior face, at most one of its ends is a
  minimal element of \(P\), so in particular there exists a non-minimal
  element of \(P\) on the exterior face.
  Let \(z\) be such a vertex.
  Let \(P + z^-\) be a poset obtained from \(P\) by adding a minimal element \(z^{-}\)
  covered only by \(z\) and extend the drawing to a planar drawing of \(\cover(P+z^-)\) with
  \(z^-\) on the exterior face of the cover graph.
  Apply Lemma~\ref{lem:unfolding-SQ} to \(P + z^{-}\) and \(z^-\),
  and let \(S\) and \(Q\) be the resulting convex subposets.
  Let us assume \(\Max(Q) \subseteq \Up_{P+z^-}(S)\) and
  \(\Down_{P+z^-}(S) \cap Q = \emptyset\).
  For every \(b \in \Max(Q)\) there exists a witnessing path from an element of \(S\) to \(b\) in \(P\).
  The least element belonging to \(Q\) on such a path lies on the exterior face of
  \(\cover(Q)\) because
  \(S\) is a component of \((P+z^-) - Q\) containing the vertex \(z^-\) lying on the exterior face of
  the drawing.
  Hence, every element of \(\Max(Q)\)
  is comparable with a vertex on the exterior face of the induced drawing of \(\cover(Q)\).
  By a symmetric argument, if \(\Min(Q) \subseteq \Down_{P+z^-}(S)\) and \(\Up_{P+z^-}(S) \cap Q = \emptyset\),
  then every element of \(\Min(Q)\) is comparable with a vertex on the exterior face of the induced drawing
  of \(\cover(Q)\).
  Therefore \(Q\) satisfies the lemma.
\end{proof}

For a positive integer \(n\), the \emph{standard example}
\(S_n\) is a poset consisting of \(2n\) elements
\(a_1\), \ldots, \(a_n\), \(b_1\), \ldots, \(b_n\), where
\(x < y\) in \(S_n\) if and only if \(x = a_i\) and
\(y = b_j\) for some \(i, j \in [n]\) with \( i \neq j \).
The standard example \(S_n\) is a canonical example of a poset
of dimension \(n\).
In a poset \(P\), every subposet isomorphic to \(S_n\)
can be identified with a set of the form
\(\{(a_1, b_1), \ldots, (a_n, b_n)\}\). 
Therefore, by a \emph{standard example} (in \(P\))
we also mean a subset of the form
\(\{(a_1, b_1), \ldots, (a_n, b_n)\} \subseteq \Inc(P)\)
where \(a_i < b_j\) in \(P\) for \(i \neq j\).
\begin{figure}
    \centering
    \includegraphics{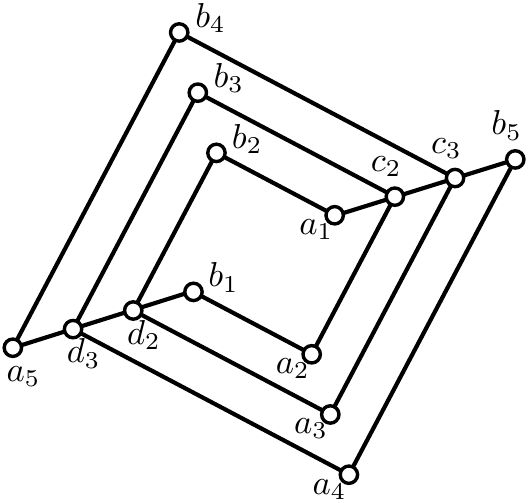}
    \caption{The Kelly poset \( \Kelly{5} \).}
    \label{fig:kelly}
\end{figure}

Kelly~\cite{kelly1981dimension} observed that for every \(n \ge 3\),
the power set of \([n]\) ordered by inclusion contains 
a subposet \(\Kelly{n}\) which has a planar cover graph
containing a standard example of size \(n\). 
The elements of the \emph{Kelly poset} \(\Kelly{n}\) are the following subsets of \([n]\):
\begin{alignat*}{3}
    a_i &= \{i\}&&\textrm{for }i \in [n],\\
    b_i &= [n] \setminus \{i\}&&\textrm{for }i \in [n],\\
    c_i &= \{1, \ldots, i\}&&\textrm{for }i \in [n-1],\\
    d_i &= \{i+1, \ldots, n\}\quad&&\textrm{for }i \in [n-1].
\end{alignat*}
Note that \(c_1 = a_1\), \(c_{n-1} = b_n\), \(d_1 = b_1\) and \(d_{n-1} = a_n\).
See Figure~\ref{fig:kelly}.

\subsection{\(k\)-outerplanarity}
For a planar drawing of a graph \(G\) we define its \emph{layering} as a
sequence of sets \((V_1, V_2, \ldots)\)
where \(V_1\) is the set of vertices lying on the exterior face of the drawing,
and for \(i \ge 2\), the set \(V_i\) is the set of vertices lying on the exterior face
in the induced drawing of \(G - (V_1 \cup \ldots \cup V_{i-1})\).
Starting from some point
all sets in the sequence \(V_1\), \(V_2\), \ldots are empty.
Each set \(V_i\) is called a \emph{layer} and the layers of any planar drawing of \(G\)
partition the set \(V(G)\).
Thus, the drawing is \(k\)-outerplanar if and only
if \(V_i = \emptyset\) for all \(i \ge k + 1\).
Every edge of \(G\) either has both ends in a set \(V_i\), or has ends in two consecutive sets
\(V_i\) and \(V_{i+1}\). For completeness we include the proofs of several easy and useful observations
about \(k\)-outerplanarity.

\begin{lemma}\label{lem:k-outerplanar-subgraph}
  If \(G\) is a graph with a fixed \(k\)-outerplanar drawing, then 
  the induced drawing of every subgraph of \(G\) is \(k\)-outerplanar as well.
\end{lemma}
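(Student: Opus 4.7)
The plan is to prove the lemma by induction on \(k\). The underlying geometric observation, which we invoke throughout, is that removing vertices and edges from a planar drawing can only enlarge faces: each removed curve or point becomes interior to some face of the resulting drawing. Consequently, the exterior face of the induced drawing of any subgraph \(H\) of \(G\) contains the exterior face of \(G\), and in particular every vertex of \(H\) that lies on the exterior face of \(G\) also lies on the exterior face of the induced drawing of \(H\).

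For the base case \(k = 1\), if \(G\) admits a \(1\)-outerplanar drawing then all vertices of \(G\), and hence all vertices of any subgraph \(H\), already lie on the exterior face of \(G\). By the observation above, these vertices lie on the exterior face of the induced drawing of \(H\) as well, so the induced drawing of \(H\) is \(1\)-outerplanar.

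For the inductive step with \(k \ge 2\), assume the lemma holds for \(k - 1\), and let \(G\) be equipped with a \(k\)-outerplanar drawing whose first layer is \(V_1\). By definition of \(k\)-outerplanarity, the induced drawing of \(G - V_1\) is \((k-1)\)-outerplanar. Given a subgraph \(H\) of \(G\), let \(W_1\) be the set of vertices of \(H\) lying on the exterior face of the induced drawing of \(H\). The geometric observation above yields \(V_1 \cap V(H) \subseteq W_1\), so \(V(H) \setminus W_1 \subseteq V(G) \setminus V_1\) and hence \(H - W_1\) is a subgraph of \(G - V_1\). Its induced drawing, which is simply the restriction of the drawing of \(G\) to the vertices and edges of \(H - W_1\), is the same whether inherited from \(G\) or from \(G - V_1\). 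Applying the inductive hypothesis to \(G - V_1\) and its subgraph \(H - W_1\), we conclude that the induced drawing of \(H - W_1\) is \((k-1)\)-outerplanar. By the recursive definition of \(k\)-outerplanarity, the induced drawing of \(H\) is therefore \(k\)-outerplanar.

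The only subtle point is the initial geometric claim that the exterior face cannot shrink upon deletion of vertices and edges; this is standard, but if more detail is wanted it can be justified topologically by noting that the exterior face of \(G\) is an open connected subset of the plane disjoint from every vertex and edge of \(H\), hence it is contained in a single face of the induced drawing of \(H\), and this face must be the unbounded one. Everything else in the argument is bookkeeping with the recursive definition.
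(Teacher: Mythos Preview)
Your proof is correct and essentially matches the paper's argument: both hinge on the same geometric observation that a vertex of \(H\) lying on the exterior face of \(G\) also lies on the exterior face of the induced drawing of \(H\), and both peel off one layer at a time. The only cosmetic difference is that the paper organises this as an induction on the layer index \(i\), showing \(H_i \subseteq G_i\), whereas you induct on \(k\); the content is the same.
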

\begin{proof}
  Let \((V_1, V_2, \ldots)\) be the layering of the drawing of \(G\) and let
  \((V_1', V_2', \ldots)\) be the layering of the induced drawing of a subgraph
  \(H\) of \(G\). For each \(i \ge 1\), let \(G_i = G[\bigcup_{j \ge i} V_j]\)
  and \(H_i = H[\bigcup_{j \ge i} V_j']\).
  We prove by induction that \(H_i \subseteq G_i\) for every \(i \ge 1\).
  This holds when \(i = 1\), as \(V(H_1) = V(H) \subseteq V(G) = V(G_1)\).
  For the induction step, let \(i \ge 2\), and suppose that \(H_{i-1} \subseteq G_{i-1}\).
  Every vertex of \(H_{i-1}\) which lies on the exterior face of \(G_{i-1}\), must
  lie on the exterior face of \(H_{i-1}\) as well.
  Thus \(V(H_{i-1}) \cap V_{i-1} \subseteq V_{i-1}'\) and as a consequence
  \[
    H_i = H_{i-1} - V_{i-1}' \subseteq G_{i-1} - V_{i-1} = G_i
  \]
  as claimed. The inductive proof follows, and therefore \(H_{k+1} \subseteq G_{k+1} = \emptyset\).
  Therefore the induced drawing of \(H\) is \(k\)-outerplanar.
\end{proof}

\begin{lemma}\label{lem:k-outerplanarity-minor-closed}
  A minor of a \(k\)-outerplanar graph is \(k\)-outerplanar.
\end{lemma}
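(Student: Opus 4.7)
The plan is to decompose the minor relation into its three generating operations — vertex deletion, edge deletion, and edge contraction — and show that each preserves \(k\)-outerplanarity in the induced drawing. Vertex and edge deletion are immediate from Lemma~\ref{lem:k-outerplanar-subgraph}, so the entire content of the lemma is the edge contraction case. Let \(G\) have a \(k\)-outerplanar drawing with layering \((V_1, V_2, \ldots)\), let \(e = uv\) be an edge with \(u \in V_i\) and \(v \in V_j\). Since the two endpoints of an edge in a planar drawing lie either in the same layer or in two consecutive layers, we may assume without loss of generality that \(i \le j\) and \(j \in \{i, i+1\}\).

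I would realize the contraction geometrically by collapsing \(v\) along the curve representing \(e\) onto \(u\), producing a planar drawing of \(G/e\) in which the new vertex \(w\) occupies the former position of \(u\). Because \(e\) is not a loop, this induces a bijection between the faces of \(G\) and the faces of \(G/e\) that preserves the exterior face, with \(u\) and \(v\) merged to \(w\) on its boundary walk. The key claim is then that the layering \((V_1', V_2', \ldots)\) of the new drawing relates to the old layering by \(V_\ell' = V_\ell\) for \(\ell < i\), \(V_i' = (V_i \setminus \{u, v\}) \cup \{w\}\) at \(\ell = i\), and \(V_\ell' = V_\ell \setminus \{v\}\) for \(\ell > i\) (where removing \(v\) is vacuous unless \(\ell = i + 1\) and \(j = i + 1\)). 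I would establish this by induction on \(\ell\): for \(\ell < i\), neither \(u\) nor \(v\) lies on the exterior face of \(G - (V_1 \cup \cdots \cup V_{\ell - 1})\), so the contracted drawing has the same exterior-face vertex set; at \(\ell = i\), the merged vertex \(w\) sits at \(u\)'s location on the exterior face of the peeled subdrawing and so joins \(V_i'\); and for \(\ell > i\) the peeled subdrawings of \(G\) and \(G/e\) are literally identical.

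In every case the highest nonempty new layer has index at most \(k\), so the induced drawing of \(G/e\) is \(k\)-outerplanar, and iterating handles any minor. The only genuinely delicate point is justifying that the face structure — and in particular the exterior face — behaves as claimed under geometric contraction of a non-loop edge in a planar embedding; this is where I would be most careful in the actual write-up, since once the face bijection is in place the layer-by-layer analysis is essentially bookkeeping.
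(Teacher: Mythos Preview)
Your approach mirrors the paper's: reduce to edge contraction via Lemma~\ref{lem:k-outerplanar-subgraph}, realize the contraction geometrically, and compare layerings. However, your claim that for \(\ell > i\) the peeled subdrawings of \(G\) and \(G/e\) are ``literally identical'' (and hence \(V_\ell' = V_\ell \setminus \{v\}\)) is false in the case \(j = i+1\). After removing layers \(1,\ldots,i\) from \(G/e\) you are left with \(G - (V_1 \cup \cdots \cup V_i) - v\), which is a \emph{proper} subgraph of \(G - (V_1 \cup \cdots \cup V_i)\) when \(v\in V_{i+1}\); deleting \(v\) can expose previously interior vertices to the outer face, so the deeper layers need not line up. For a concrete instance, take two nested \(4\)-cycles joined by a perfect matching, with a central vertex \(x\) adjacent to all inner-cycle vertices: contracting one matching edge moves \(x\) from layer \(3\) to layer \(2\).

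The fix is exactly what the paper does: rather than asserting equality layer-by-layer past level \(i\), observe only that \(\bigcup_{\ell > i} V_\ell' = (\bigcup_{\ell > i} V_\ell) \setminus \{u,v\}\), so the post-peeling drawing of \(G/e\) is the induced drawing of a subgraph of \(G - (V_1 \cup \cdots \cup V_i)\), which is \((k-i)\)-outerplanar; then invoke Lemma~\ref{lem:k-outerplanar-subgraph} once more to conclude. (As a side remark, the paper realizes the contraction by deleting \(u,v\) and placing \(w\) in the resulting face rather than by sliding \(v\) along \(e\); either works, but the paper's version makes it transparent that the rest of the drawing is untouched.)
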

\begin{proof}
  Every minor of a graph can be obtained from one of its subgraphs by a sequence of
  edge contractions. Therefore, by Lemma~\ref{lem:k-outerplanar-subgraph},
  it suffices to show that if \(G\) is a \(k\)-outerplanar graph and \(e = uv \in E(G)\),
  then the graph \(G/e\) obtained by contracting the edge \(e\) to a vertex \(w\)
  is \(k\)-outerplanar. Fix a \(k\)-outerplanar drawing of \(G\) with a layering
  \((V_1, V_2, \ldots)\).
  After deleting the vertices \(u\) and \(v\) and all edges incident to them
  from the drawing of \(G\), there emerges a face containing all vertices
  which are adjacent to \(w\) in \(G/e\).
  Draw the vertex \(w\) in the interior of the region bounded by that face
  and draw edges connecting \(w\) to its neighbours in \(G/e\) to obtain a planar
  drawing of \(G/e\). Denote the layering of that drawing by  \((V_1', V_2', \ldots)\).
  Let \(i\) denote the least integer such that \(e\) has an end in \(V_i\).
  We have \(V_j' = V_j\) for \(j < i\), \(V_i' = (V_i \setminus \{u, v\}) \cup \{w\}\),
  and \(\bigcup_{j > i} V_j' = (\bigcup_{j > i} V_j) \setminus \{u, v\}\).
  Hence, by Lemma~\ref{lem:k-outerplanar-subgraph}, the induced drawing of
  \(G[\bigcup_{j > i} V_j']\) is \((k-i)\)-outerplanar, and therefore the drawing
  of \(G/e\) is \(k\)-outerplanar.
\end{proof}

\begin{lemma}
  A graph obtained from a \(k\)-outerplanar graph by adding some degree-\(1\) vertices is
  \(k\)-outerplanar.
\end{lemma}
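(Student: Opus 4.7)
The plan is to induct on the number of added pendants, which reduces to the case of a single new degree-$1$ vertex $v$ adjacent to some $u\in V(G)$. Fix a $k$-outerplanar drawing of $G$ with layering $(V_1,V_2,\ldots)$ and let $i$ be such that $u\in V_i$. The goal is to extend this drawing to a $k$-outerplanar drawing of $G':=G+v$.

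The main step is to insert $v$ into the correct face. Since $u\in V_i$, the vertex $u$ lies on the boundary of the exterior face of the induced drawing of $G_i:=G-(V_1\cup\cdots\cup V_{i-1})$. Take a point close to $u$ lying in this exterior face; it belongs to some face $F$ of $G$ with $u$ on its boundary, and since $F$ does not cross any edge of $G_i$, the whole region $F$ sits inside the exterior face of $G_i$. Moreover, for each $j<i$ the face $F$ must lie in an interior face of $G_j$: otherwise $u$ would be exposed in $G_j$ and so would belong to $V_{j'}$ for some $j'\le j<i$, contradicting $u\in V_i$. I would then place $v$ in $F$ arbitrarily close to $u$ and draw the edge $uv$ as a short arc inside $F$, obtaining a planar drawing of $G'$.

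It remains to verify that the layering $(V'_1,V'_2,\ldots)$ of this drawing satisfies $V'_j=V_j$ for $j<i$, $V'_i=V_i\cup\{v\}$, and $V'_j=V_j$ for $j>i$. The first equality follows because $v$ lies in an interior face of $G_j$ for every $j<i$, so adding $v$ and the edge $uv$ does not affect which vertices appear on the exterior face after peeling off the first $j-1$ layers. At step $i$, the face $F$ becomes part of the exterior face of $G_i+v$, exposing $v$, and after removing the first $i$ layers of $G'$ we are left with exactly $G_{i+1}$, so all further layers agree with those of $G$. Consequently $V'_j=V_j=\emptyset$ for every $j>k$, which is what we need.

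The only genuine obstacle is the geometric claim that some face $F$ of $G$ incident to $u$ is entirely contained in the exterior face of $G_i$; once this is in hand, together with the observation that $F$ is also interior to every earlier $G_j$, everything else is routine bookkeeping. Iterating the construction over all added pendants then completes the proof.
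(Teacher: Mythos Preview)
Your proposal is correct and follows essentially the same approach as the paper: reduce to adding a single pendant, place the new vertex in a face of \(G\) that lies in the exterior region of \(G_i\), and observe that the layering changes only by inserting the new vertex into \(V_i\). Your write-up is in fact more careful than the paper's, which simply asserts both the existence of the appropriate face and the stability of the layering without further justification; your argument for why \(F\) must be interior to every \(G_j\) with \(j<i\) fills in exactly the detail the paper leaves implicit.
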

\begin{proof}
  Let \(G\) be a graph with a fixed \(k\)-outerplanar drawing,
  and let \((V_1, V_2, \ldots)\) be the layering of the drawing.
  Consider a vertex \(v\) of \(G\) which belongs to a layer \(V_i\).
  When we want to add a new vertex \(u\) attached to \(v\), we can
  extend the planar drawing so that the vertex \(u\) is
  drawn in the unbounded region of \(G[V_i \cup V_{i+1} \cup \ldots]\).
  The layering of the resulting drawing is the same as the original one except that
  the vertex \(u\) is added to the layer \(V_i\).
  Therefore adding a single degree-\(1\) vertex preserves \(k\)-outerplanarity of
  \(G\). As we can add any number of degree-\(1\) vertices one at a time, the
  lemma follows.
\end{proof}

\section{The roadmap}\label{sec:roadmap}

The proof of Theorem~\ref{thm:dimension-k-outerplanar-cover}
is a consequence of four lemmas.
Lemmas~\ref{lem:doubly-exposed-k-outerplanar},%
~\ref{lem:se-vs-kelly} and \ref{lem:outerplanarity-of-kelly}
are our contribution, while
Lemma~\ref{lem:kmt} is a result by Kozik, Micek, and Trotter.
In this section we give the statement of these four lemmas, and
we show how they imply the Theorems~\ref{thm:dimension-k-outerplanar-cover}
and~\ref{thm:dimension-cubic-in-height}.

Let \(P\) be a poset with a planar cover graph and let \(I \subseteq \Inc(P)\).
Following the terminology from \cite{kozik2019dimension}, we say that
the set \(I\) is
\emph{doubly exposed} if there exist
a planar drawing of
the cover graph of \(P\)
and two vertices \(x_0\) and \(y_0\) on the exterior face such that
for every \((a, b) \in I\) we have \(x_0 \le b\) and \(a \le y_0\)
in \(P\).
In such a setting we say that \(I\) is doubly exposed
\emph{by} the pair \((x_0, y_0)\).

\begin{lemma}\label{lem:doubly-exposed-k-outerplanar}
  Let \(k \ge 1\) and 
  let \(P\) be a poset with a \(k\)-outerplanar cover graph
  such that \(\dim(P) \ge 3\).
  Then there exist a poset \(P'\) with
  a \((k+1)\)-outerplanar cover graph
  and a doubly exposed set \(I \subseteq \Inc_{P'}(\Min(P'), \Max(P'))\)
  such that
  \[\dim(P) \le 4k \cdot \dim_{P'}(I).\]
\end{lemma}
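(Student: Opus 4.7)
The plan is a three-stage reduction, with the factor $4k$ arising as $4 \cdot k$: a factor of $4$ from two unfoldings, and a factor of $k$ from a pigeonhole exploiting the $k$-outerplanar layering.

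First I would reduce to a connected min-max problem at no cost in dimension. Applying Lemma~\ref{lem:min-max-reduction} yields a poset $P_1$ whose cover graph is $\cover(P)$ with some degree-1 vertices attached—hence still $k$-outerplanar—and satisfying $\dim(P) \le \dim_{P_1}(\Min(P_1), \Max(P_1))$. Since $\dim(P) \ge 3$, Lemma~\ref{lem:connected} lets me pass to a connected component $Q$ of $P_1$ with the same min-max dimension, and I fix a $k$-outerplanar drawing of $\cover(Q)$.

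Second, I would apply Lemma~\ref{lem:unfolding-planar} twice, once to expose the maxima and once (via min-max duality) to expose the minima, to obtain a convex subposet $R$ of $Q$ satisfying
\[
\dim_Q(\Min(Q), \Max(Q)) \le 4 \cdot \dim_R(\Min(R), \Max(R)),
\]
and such that in the induced drawing $\Max(R) \subseteq \Up_R(V_1(R))$ and $\Min(R) \subseteq \Down_R(V_1(R))$, where $V_1(R)$ is the set of vertices on the exterior face of $\cover(R)$. Some care is required to ensure that the second application preserves the first exposure; I expect this to require either proving a combined strengthened variant of Lemma~\ref{lem:unfolding-planar} giving both exposures at once, or a symmetric application of the underlying Lemma~\ref{lem:unfolding-SQ} that simultaneously controls both ends.

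Third, I would convert ``exposure by the set $V_1(R)$'' into ``exposure by a single pair $(x_0,y_0)$'' at the cost of a factor of $k$, using the layering of the inherited $k$-outerplanar drawing. Write the layers of $\cover(R)$ as $U_1,\ldots,U_k$; every exposing witness sits in some $U_i$, so classifying each incomparable min-max pair of $R$ by the layer index of its witness partitions $\Inc_R(\Min(R),\Max(R))$ into $k$ classes, and the union bound for dimension gives some index $\ell^*$ with $\dim_R(I_{\ell^*}) \ge \dim_R(\Min(R),\Max(R))/k$. I would then build $P'$ from $R$ (or a suitable subposet) by attaching two new degree-1 vertices $x_0^-$ and $y_0^+$, chosen as a new minimum below a specific witness $x_0$ and a new maximum above a specific witness $y_0$ on the exterior; drawing $x_0^-$ and $y_0^+$ in a new outer layer pushes the outerplanarity to at most $(k+1)$, and the set $I$ corresponding to $I_{\ell^*}$ becomes doubly exposed by $(x_0^-,y_0^+)$ in $P'$.

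The main obstacle, beyond the technicality of preserving both exposures through two unfoldings, is the third stage: a naive pigeonhole over all vertices of $V_1(R)$ is hopeless since that set can be arbitrarily large, so the $k$-outerplanar layering must be used essentially, and the restriction to a single layer must then be refined to a single pair of witnesses while keeping $P'$ to within $(k+1)$-outerplanarity. Making this layer-to-pair step precise—so that the selected $(x_0^-,y_0^+)$ genuinely lie on the exterior face of a planar drawing of $\cover(P')$—is where the bulk of the technical work will lie.
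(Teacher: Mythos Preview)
Your factor decomposition and ingredient list are right, but the order of operations and the role you assign to the layer pigeonhole differ from the paper in a way that leaves real gaps in Steps~2 and~3.

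The paper interleaves the steps as unfold $\to$ layer pigeonhole $\to$ unfold, giving $2 \cdot k \cdot 2$. The layer pigeonhole is not used to pass from set-exposure to pair-exposure; it is used to obtain the \emph{second} set-exposure. After the first unfolding yields, say, $\Max(Q) \subseteq \Up_Q(V_1)$, one partitions $\Min(Q)$ into $A_1,\ldots,A_k$ by the least layer that $\Up_Q(a)$ reaches, and restricts to $Q_i = \Up_Q(A_i)$; since $Q_i \subseteq V_i \cup \cdots \cup V_k$ and every $a \in A_i$ has a witnessing path up to $V_i$, that path meets the exterior face of $\cover(Q_i)$, so $\Min(Q_i)$ is exposed. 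The Max-exposure persists because any witnessing path from $V_1$ to $b \in \Max(Q_i)$ must likewise meet the exterior face of $\cover(Q_i)$. This completely sidesteps the difficulty you flag in Step~2: there is no need to force Lemma~\ref{lem:unfolding-planar} to expose the side of your choosing (which it will not do, and duality does not help), nor to argue that a second unfolding preserves the first exposure.

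The conversion from exposure-by-$V_1$ to exposure-by-a-pair is then achieved by the \emph{second} unfolding, via Lemma~\ref{lem:unfolding-SQ} applied to a connected component $Q''$ of the chosen $Q_i$ from a minimal element on its exterior face. This produces the convex subposet $R$ together with a component $S$ of $Q'' - R$ that meets the exterior face. The new elements $x_0, y_0$ are \emph{not} degree-$1$ attachments to specific witnesses: $x_0$ is placed below all vertices of $R$ adjacent to $S$, and $y_0$ above all exterior-face vertices of $R$ outside $S$. Both sets of neighbours lie on the outer face of $\cover(R)$ (the hole left by $S$ being part of that face), so $x_0$ and $y_0$ can be drawn in the unbounded region; removing the new outer face then leaves a subgraph of the $k$-outerplanar $\cover(R)$, which is why the bound is $k{+}1$. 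Your Step~3 plan of pigeonholing by layer and then attaching degree-$1$ pendants to a single chosen witness $x_0$ would only doubly expose those pairs $(a,b)$ with $x_0 \le b$, and nothing in the layer structure singles out such a universal witness; this is the genuine missing idea.
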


For a poset \(P\) and a nonempty subset \(I \subseteq \Inc(\Min(P), \Max(P))\), let
\(\rho_P(I)\) denote the size of a largest standard example in
\(I\).
The following lemma is proven by Kozik, Micek, and Trotter in
\cite{kozik2019dimension}.

\begin{lemma}[\cite{kozik2019dimension}]\label{lem:kmt}
  If \(P\) is a poset with a planar cover graph and
  \(I\) is a doubly exposed subset of \(\Inc_P(\Min(P), \Max(P))\) in \(P\), then
  \[
    \dim_P(I) \le \rho_P(I)^2.
  \]
\end{lemma}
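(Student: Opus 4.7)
The plan is to prove the contrapositive: assuming $\rho_P(I) \le r$, partition $I$ into at most $r^2$ sets that are reversible in $P$. I would fix a planar drawing of $\cover(P)$ witnessing double exposure by $(x_0, y_0)$. For each pair $(a,b) \in I$, choose a witnessing path $W^b$ from $x_0$ to $b$ (which exists since $x_0 \le b$ in $P$) and a witnessing path $W^a$ from $a$ to $y_0$ (which exists since $a \le y_0$ in $P$). Since $x_0$ and $y_0$ both lie on the exterior face, I would draw a Jordan arc $\gamma$ from $x_0$ to $y_0$ in the outer face disjoint from the graph; each pair $(a,b)$ then gives rise to a topological configuration of curves in the plane built from $W^b$, $W^a$, and $\gamma$, and this configuration is the backbone of the combinatorics.

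Next I would assign to each pair $(a,b) \in I$ a two-coordinate signature $\sigma(a,b) = (\sigma_1(a,b), \sigma_2(a,b)) \in [r] \times [r]$. The first coordinate captures a coarse topological type of $W^b$ relative to the other upward witnessing paths emanating from $x_0$ (for instance, by ordering these paths according to how they leave $x_0$ along the rotation at $x_0$ and then grouping them by equivalence under a suitable planar relation), and the second coordinate does the analogue for $W^a$ at $y_0$. The bound $r$ on each coordinate should follow from $\rho_P(I) \le r$: any pattern of witnessing paths that would blow a coordinate beyond $r$ should, via planarity, produce $r+1$ pairs with $a_i \le b_j$ for $i \ne j$ and hence a standard example of size greater than $r$.

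The central claim is then that each signature class $I_\sigma$ is reversible. If some $I_\sigma$ contained an alternating cycle $((a_1, b_1), \ldots, (a_k, b_k))$, the comparabilities $a_i \le b_{i+1}$ would be realized by additional witnessing paths which, combined with the $W^{b_i}$, $W^{a_i}$ and $\gamma$, form a system of closed curves whose intersection pattern is tightly controlled by the Jordan curve theorem. The fact that all pairs share a signature should force these curves into a configuration that contains more crossings than the cycle itself required, and chasing these crossings should yield indices $i \ne j$ with $a_i \le b_j$ (on top of those prescribed by the cycle) that together constitute a standard example of size greater than $r$, contradicting $\rho_P(I) \le r$. Since there are at most $r^2$ signature classes, this gives $\dim_P(I) \le r^2$.

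The main obstacle is the precise design of the signature. It must be fine enough that same-signature pairs cannot participate in an alternating cycle, yet coarse enough to take only $r^2$ values, and the two coordinates must be genuinely independent so that the bound is $r \cdot r$ rather than something larger. The independence is ultimately a consequence of the double-exposure hypothesis, which separates the upward witnessing paths at $x_0$ from the downward witnessing paths at $y_0$; each side of the drawing (as cut by $\gamma$) contributes one factor of $r$. The technical heart of the argument should be a planar-topological lemma roughly stating that a family of witnessing paths from a single exterior-face vertex admits an ordering whose ``width'' is bounded by the largest set of pairwise-crossing paths, together with a translation from such crossings to standard examples.
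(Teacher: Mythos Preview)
The paper does not contain a proof of this lemma at all: it is quoted verbatim as a result of Kozik, Micek, and Trotter~\cite{kozik2019dimension} and used as a black box in the proof of Theorem~\ref{thm:dimension-k-outerplanar-cover}. So there is nothing in the paper to compare your proposal against.

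As for the proposal itself, it is a plausible high-level outline but not a proof. The critical step---defining a signature $\sigma(a,b)\in[r]\times[r]$ with the property that each signature class is reversible---is left entirely unspecified. You say the first coordinate ``captures a coarse topological type'' and that exceeding $r$ on a coordinate ``should, via planarity, produce'' a large standard example, but you never write down what the coordinate is or prove either bound. Likewise, the reversibility argument (``chasing these crossings should yield indices $i\ne j$ with $a_i\le b_j$'') is pure intuition. The actual argument in~\cite{kozik2019dimension} does follow this general shape (witnessing trees rooted at $x_0$ and $y_0$, a left/right classification of paths, a partition of $I$ whose size is controlled by the largest standard example), but filling in the details is the whole content of that paper's main technical section; the work does not do itself. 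If you intend to reprove the lemma rather than cite it, you will need to commit to a concrete signature and actually verify both that it takes at most $r^2$ values and that same-signature pairs form a reversible set.
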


In~\cite{kozik2019dimension}, it is shown that
if \(P\) is a poset with a planar cover graph and
\(I \subseteq \Inc(\Min(P), \Max(P))\) is a doubly exposed set in \(P\), then
\(P\) contains a chain of size \(\Omega(\rho_P(I))\).
We generalise this result by showing that in such a setting \(P\) actually contains a subposet isomorphic to a Kelly poset \(\Kelly{n}\) with \(n = \Omega(\rho_P(I))\).

For a poset \(P\), let \(\kappa(P)\) denote the largest integer \(n \ge 3\)
such that \(P\) contains a subposet isomorphic to the Kelly poset \(\Kelly{n}\).
(If no such \(n\) exists, set \(\kappa(P) = 2\).)

\begin{lemma}\label{lem:se-vs-kelly}
  If \(P\) is a poset with a planar cover graph and
  \(I \subseteq \Inc_P(\Min(P), \Max(P))\) is a doubly exposed
  set in \(P\), then
  \[\rho_P(I) \le 360 \cdot (\kappa(P) + 1).\]
\end{lemma}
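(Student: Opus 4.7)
The plan is, starting from a doubly exposed standard example of maximum size $n = \rho_P(I)$ in $I$, to construct a subposet of $P$ isomorphic to $K_m$ with $m \ge n/360 - 1$, which directly yields the claim since $\kappa(P) \ge m$.

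Let $\{(a_i, b_i) : i \in [n]\}$ be such a standard example, doubly exposed by a pair $(x_0, y_0)$ in a fixed planar drawing of $\cover(P)$ with $x_0, y_0$ on the outer face. Since $x_0 \le b_j$ in $P$ for every $j$, there is a witnessing path $\pi_j$ from $x_0$ to $b_j$; since $a_i \le y_0$ for every $i$, there is a witnessing path $\sigma_i$ from $a_i$ to $y_0$; and for every $i \ne j$ there is a witnessing path $\tau_{ij}$ from $a_i$ to $b_j$. Because $x_0$ and $y_0$ lie on the outer face, the $\pi_j$'s leave $x_0$ in a well-defined cyclic order and the $\sigma_i$'s arrive at $y_0$ in a well-defined cyclic order, and the $\tau_{ij}$'s must be routed inside the planar subdivision cut out by the $\pi_j$'s and $\sigma_i$'s.

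The first step would be a Ramsey/pigeonhole reduction on the two cyclic orders: after relabeling and passing to a standard subexample of size $\Omega(n)$, the paths $\pi_1,\dots,\pi_{n_1}$ appear around $x_0$ in this cyclic order while the $\sigma_1,\dots,\sigma_{n_1}$ appear around $y_0$ in a canonical (say, reverse) cyclic order. A second pigeonhole step identifies extremal paths meeting the outer face in a controlled way, fixing the global ``handedness'' of the configuration while losing another constant factor. In this canonicalised configuration, planarity forces each $\tau_{ij}$ to traverse a region to one side of a ``central'' chain for $i<j$ and to the other side for $i>j$. A frequency/pigeonhole argument on common vertices along these routed $\tau_{ij}$'s then produces two chains $c_1 < c_2 < \cdots < c_{m-1}$ and $d_1 < d_2 < \cdots < d_{m-1}$ in $P$, each of length $\Omega(n)$, such that (after a final index-synchronization step) $c_j$ lies above $a_1,\dots,a_j$ and below $b_{j+1},\dots,b_m$, and $d_j$ lies above $a_{j+1},\dots,a_m$ and below $b_1,\dots,b_j$. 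Together with the selected $a_i$'s and $b_i$'s, these chains form a subposet isomorphic to $K_m$, and the constant $360$ compiles the accumulated losses from the above Ramsey-type reductions.

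The main obstacle is constructing \emph{both} chains simultaneously with the precise comparability pattern demanded by the Kelly poset. A single long chain can be extracted by essentially the same argument used in Lemma~\ref{lem:kmt} by Kozik, Micek, and Trotter, but the second chain must live on the opposite side of the doubly exposed configuration, and one needs to guarantee that the two chains coexist with all required up- and down-comparabilities to the same $\Omega(n)$-sized set of $a$'s and $b$'s. This is where the maximality of the chosen standard example must be combined with the planar layout of the witnessing paths, and it is the step where losing only a constant factor (rather than a $\sqrt{n}$ factor from a naive Erdős–Szekeres application) is the delicate point.
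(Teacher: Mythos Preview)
Your proposal identifies the right target --- extract a Kelly subposet $\Kelly{m}$ with $m = \Omega(n)$ --- but it is a plan rather than a proof, and you yourself flag the crux (producing \emph{both} chains with only constant loss) as unresolved. Compared with the paper's argument, several concrete mechanisms are missing, and one of your steps is unnecessary.

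First, the alignment of the two cyclic orders is free, not a Ramsey step: the paper proves directly (Lemma~\ref{lem:iff}) that $a_i \prec_S a_j$ if and only if $b_i \prec_T b_j$, so no factor is lost here. Second, and more importantly, the paper does not work with individual witnessing paths $\pi_j$, $\sigma_i$ but with a blue \emph{tree} $T$ and a red \emph{tree} $S$; the first genuine reduction is to a \emph{separated} standard example where $T(I)$ and $S(I)$ are disjoint. This costs a factor of $36$ and comes from a nontrivial structural fact (Lemma~\ref{lem:incr-path}): in an auxiliary digraph recording red--blue intersections, every monotone path has length at most~$6$. Your outline has nothing corresponding to this.

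The heart of the paper's proof is then the construction of a single $x_0$--$y_0$ path $N^*$ that places half of the $a_i$'s on one side and the matching $b_i$'s on the other (Lemmas~\ref{lem:domino}--\ref{lem:separating-N-path}). The two Kelly chains $c_j'$ and $d_j'$ are obtained as intersection points of the witnessing paths $W(a_j,b_{j+1})$ and $W(a_{j+1},b_j)$ with two fixed subpaths $W_R$, $W_L$ of $N^*$; a further argument shows that the indices where these crossings land on the ``black'' portion form a \emph{single interval}, which is what yields the synchronized chains at constant cost. Your ``frequency/pigeonhole argument on common vertices along the $\tau_{ij}$'' does not substitute for this: without the separating path there is no reason the witnessing paths in the two directions should all meet a pair of common chains, and a naive approach here is exactly where an Erd\H{o}s--Szekeres $\sqrt{n}$ loss would creep in. (Incidentally, maximality of the standard example plays no role in the paper's proof; only its size matters.)
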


Finally, we show that \(\kappa(P)\) is at most linear in \(k\) for a  poset \(P\)
with a \(k\)-outerplanar cover graph.   

\begin{lemma}\label{lem:outerplanarity-of-kelly}
  For every \(k \ge 1\), 
  if \(P\) is a poset with a \(k\)-outerplanar cover graph,
  then
  \[\kappa(P) \le 4k + 2.\]
\end{lemma}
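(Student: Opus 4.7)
The plan is to reduce to a statement purely about the cover graph of \(\Kelly{n}\). First, I would show that whenever \(\Kelly{n} \subseteq P\) as a subposet, \(\cover(\Kelly{n})\) is a minor of \(\cover(P)\). For each cover \(x \lessdot y\) in \(\Kelly{n}\), the relation \(x <_P y\) is realised by a saturated chain \(x = v_0 \lessdot v_1 \lessdot \cdots \lessdot v_m = y\) in \(P\) whose internal vertices must avoid the ground set of \(\Kelly{n}\), since any such intermediate element of \(\Kelly{n}\) would lie strictly between \(x\) and \(y\) in the subposet and violate the cover. Forming the union \(H\) of a family of such witnessing paths in \(\cover(P)\) and then splitting each path around its midpoint, I would assign the auxiliary vertices to connected branch sets anchored at the Kelly-elements, yielding \(\cover(\Kelly{n})\) as a minor of \(\cover(P)\). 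By Lemma~\ref{lem:k-outerplanarity-minor-closed}, \(\cover(\Kelly{n})\) is therefore \(k\)-outerplanar whenever \(\cover(P)\) is.

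The problem thus reduces to showing that \(\cover(\Kelly{n})\) is not \(k\)-outerplanar once \(n \ge 4k + 3\). The structure I would exploit is the chain of induced six-cycles \(C_i = c_{i-1}\, b_i\, d_i\, d_{i-1}\, a_i\, c_i\) for \(i \in \{2, \ldots, n-1\}\), where consecutive cycles \(C_i, C_{i+1}\) share exactly the two non-adjacent vertices \(c_i, d_i\) and no edge, while cycles whose indices differ by at least two are vertex-disjoint. Between each hub pair \((c_i, d_i)\) with \(2 \le i \le n-2\) one finds four internally disjoint paths of length three (two arising from \(C_i\) and two from \(C_{i+1}\)), giving a \(K_{2,4}\)-subdivision that already forces local depth two at every hub. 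I would then show, by analysing the cyclic order of neighbours around each hub in an arbitrary planar drawing, that any consecutive block of four cycles \(C_i, C_{i+1}, C_{i+2}, C_{i+3}\) contributes at least one additional layer to the outerplanar layering; summing over all \(n-2\) cycles produces at least \(\lceil (n-2)/4 \rceil\) layers, which rearranges to \(n \le 4k + 2\).

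The main obstacle will be this per-block layer-growth argument. Because consecutive cycles share only two non-adjacent vertices, the planar embedding has considerable freedom — the cycles may be nested, drawn side-by-side, or interleaved through different rotations at the hub vertices — so a uniform argument across all embeddings is required. I expect to proceed via a careful \(\theta_4\)-nesting analysis at each hub, tracking which of the two arcs from \(C_i\) and the two arcs from \(C_{i+1}\) are forced into interior regions, and combining this with an amortised depth count showing that any attempt to share one layer across too many consecutive cycles is obstructed by the intermediate hubs. Ruling out more economical embeddings by this case analysis — in particular embeddings that reuse outer-face frontage across non-consecutive cycles — is the combinatorial core of the proof.
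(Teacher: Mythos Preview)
Your two-step strategy (first exhibit \(\cover(\Kelly{n})\) as a minor of \(\cover(P)\), then show \(\cover(\Kelly{n})\) is not \(k\)-outerplanar for \(n\ge 4k+3\)) is exactly the paper's route, but both halves of your execution have issues.

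\textbf{The minor argument.} Your observation that a witnessing path for a cover \(x\lessdot y\) of \(\Kelly{n}\) has no internal vertex in \(\Kelly{n}\) is correct. What fails is the next step: distinct witnessing paths can share internal vertices, and ``split at midpoint'' does not give disjoint branch sets. For instance, the paths realising \(a_2\lessdot c_2\) and \(a_2\lessdot d_1\) may share a vertex \(v\) with \(a_2<_P v\), \(v<_P c_2\), \(v<_P d_1\); nothing prevents \(v\) from landing in the upper half of both paths, whereupon it is simultaneously assigned to the branch sets of \(c_2\) and \(d_1\). The paper avoids this by first fixing two backbone witnessing paths \(W_c\) through \(c_1<\cdots<c_{n-1}\) and \(W_d\) through \(d_{n-1}<\cdots<d_1\), noting that \(W_c\) and \(W_d\) are elementwise incomparable, and then choosing each remaining witnessing path to minimise edges outside \(W_c\cup W_d\); the branch sets are then carved out of \(W_c\), \(W_d\), and the off-backbone stubs using the order along the chains, which makes disjointness checkable via the incomparabilities \(a_i\|b_i\).

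\textbf{The non-\(k\)-outerplanarity argument.} You are working much harder than necessary. Your six-cycles \(C_i\) are the right objects, but instead of analysing overlapping consecutive cycles through a \(K_{2,4}\)/hub case analysis, simply keep only the even-indexed ones: the cycles \(C_2,C_4,\ldots,C_{2(2k+1)}\) are pairwise vertex-disjoint. For each such \(C_{2i}\), the neighbouring disjoint cycles \(C_{2(i-1)}\) and \(C_{2(i+1)}\) attach to \(C_{2i}\) at the pairs \(\{c_{2i-1},d_{2i-1}\}\) and \(\{c_{2i},d_{2i}\}\) respectively; since these four vertices occur on \(C_{2i}\) in the cyclic order \(c_{2i},c_{2i-1},d_{2i},d_{2i-1}\), planarity forces one neighbour inside and the other outside the region bounded by \(C_{2i}\). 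A one-line induction then gives \(k+1\) nested disjoint cycles in any planar drawing, so the drawing is not \(k\)-outerplanar. No amortisation, no embedding case analysis. This is precisely the paper's argument, and it sidesteps everything you flagged as the ``main obstacle''.
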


The proof of Theorem~\ref{thm:dimension-k-outerplanar-cover} can now be obtained
as a composition of the Lemmas~\ref{lem:doubly-exposed-k-outerplanar},%
~\ref{lem:kmt},~\ref{lem:se-vs-kelly}, and~\ref{lem:outerplanarity-of-kelly}.

\begin{proof}[Proof of Theorem~\ref{thm:dimension-k-outerplanar-cover}]
Let \(k\) be a positive integer and
let \(P\) be a poset with a \(k\)-outerplanar cover graph such that \(\dim(P) \ge 3\).
Let \(P'\) and \(I\) be obtained by
applying Lemma~\ref{lem:doubly-exposed-k-outerplanar} to the poset \(P\).
This way \(P'\) is a poset with a \((k+1)\)-outerplanar graph and
\(I\) is a doubly exposed set of min-max pairs in \(P'\) such that
\begin{align*}
  \dim(P) &\le 4k \cdot \dim_{P'}(I)\\
  &\le 4k \cdot \rho_{P'}(I)^2 &&\textrm{(by Lemma~\ref{lem:kmt})}\\
  &\le 4k \cdot (360 \cdot (\kappa(P') + 1))^2 &&\textrm{(by Lemma~\ref{lem:se-vs-kelly})}\\
  &\le 4k \cdot (360 \cdot (4(k+1) + 2 + 1))^2 &&
  \textrm{(by Lemma~\ref{lem:outerplanarity-of-kelly})}.
\end{align*}
  Hence the theorem holds for the function \(f(k) = 4k \cdot (360 \cdot (4k+7))^2\).
\end{proof}

As a consequence of Theorem~\ref{thm:dimension-k-outerplanar-cover}, we can easily prove that the dimension of a height-\(h\) poset with a planar cover graph is \( \mathcal{O}(h^3) \).

\begin{proof}[Proof of Theorem~\ref{thm:dimension-cubic-in-height}]
Let \(P\) be a poset of height \(h\) with a planar cover graph.
By Lemma~\ref{lem:min-max-reduction}, there exists a poset \(P'\)
of height \(h\) with a planar cover graph such that
\(\dim(P) \le \dim_{P'}(\Min(P'), \Max(P'))\).
Take any such \(P'\) and fix a planar drawing of the cover graph of \(P'\).
By Lemma~\ref{lem:unfolding-planar}, there exists a convex subposet \(Q\) of
\(P'\) such that
\[
  \dim_{P'}(\Min(P'), \Max(P')) \le 2 \cdot \dim_{Q}(\Min(Q), \Max(Q)) \le 2 \cdot \dim(Q).
\]
and either every element of \(\Max(Q)\) or every element of
\(\Max(Q)\) is comparable in \(Q\) with an element lying on the exterior face of
\(\cover(Q)\). Without loss of generality we assume that every element of \(\Max(Q)\) is comparable with an
element lying on the exterior face of \(Q\).

Let \((V_1, V_2, \ldots)\) be the layering of the induced drawing of \(\cover(Q)\).
Recall that every edge of \(\cover(Q)\) either has two ends in one layer, or is between two consecutive
layers. Since the height of \(Q\) is at most \(h\), every witnessing path has at most \(h - 1\) edges,
and therefore for any pair of comparable elements \(x\) and \(y\), if \(x \in V_i\) and \(y \in V_j\), then
\(|i - j| \le h - 1\).
By our assumption, every maximal element of \(Q\) is comparable with an element of \(V_1\) and thus
\(\Max(Q) \subseteq V_1 \cup \cdots \cup V_h\). But every element of \(Q\) is comparable with some maximal
element, so all elements of \(Q\) are in \(V_1 \cup \cdots \cup V_{2h-1}\).
Hence the induced drawing of \(\cover(Q)\) is \((2h-1)\)-outerplanar and therefore
\begin{align*}
  \dim(P) &\le \dim_{P'}(\Min(P'), \Max(P')) \le 2\cdot\dim(Q) \le
  2 \cdot f(2h-1) \in \mathcal{O}(h^3)
\end{align*}
where \(f(k)\) is a function satisfying Theorem~\ref{thm:dimension-k-outerplanar-cover}.
\end{proof}

\section{Reduction to doubly exposed posets}\label{sec:reduction-to-doubly-exposed}

Ahead of the proof of Lemma~\ref{lem:doubly-exposed-k-outerplanar}, let us first present an elementary lemma about poset dimension.
\begin{lemma}\label{lem:dimAB}
  Let \(P\) be a poset and let \(A\) and \(B\) be subsets of its ground set such that
  \(\dim_P(A, B) \ge 1\).
  Then
  \[
    \dim_P(A, B) = \dim_P(A,  B \cap \Up_P(A))
  \]
\end{lemma}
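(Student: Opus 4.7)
The plan is to prove the two inequalities separately. The inequality $\dim_P(A, B \cap \Up_P(A)) \le \dim_P(A, B)$ is immediate from the monotonicity of $\dim_P(\cdot)$ (noted right after its definition) applied to the inclusion $\Inc_P(A, B \cap \Up_P(A)) \subseteq \Inc_P(A, B)$, so the real content is the reverse inequality. Throughout, I would set $U = \Up_P(A)$ and $d = \dim_P(A, B \cap U)$.

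The key structural observation to record first is that no element of $P \setminus U$ lies above any element of $U$ in $P$: if $u \in U$, then some $a \in A$ satisfies $a \le u$, and hence every $v$ with $u \le v$ in $P$ satisfies $a \le v$ and belongs to $U$. The main construction is then to take linear extensions $L_1, \ldots, L_d$ of $P$ which together reverse every pair in $\Inc_P(A, B \cap U)$, and to modify only $L_1$ into $L_1^{\ast} := [\, L_1|_{P \setminus U} < L_1|_{U} \,]$, obtained by placing all of $P \setminus U$ (ordered as in $L_1$) below all of $U$ (also ordered as in $L_1$). By the structural observation, $L_1^{\ast}$ is still a linear extension of $P$: the only kind of comparability $x < y$ in $P$ that $L_1^{\ast}$ could fail to respect would have $x \in U$ and $y \in P \setminus U$, which is exactly what is ruled out.

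It then remains to verify that $L_1^{\ast}, L_2, \ldots, L_d$ jointly reverse every pair of $\Inc_P(A, B)$. Any pair $(a, b) \in \Inc_P(A, B \cap U)$ reversed by some $L_i$ with $i \ge 2$ is unaffected, and one reversed by $L_1$ is still reversed by $L_1^{\ast}$, because both its coordinates lie in $U$ and $L_1^{\ast}$ agrees with $L_1$ on $U$. Any pair $(a, b) \in \Inc_P(A, B) \setminus \Inc_P(A, B \cap U)$ satisfies $a \in A \subseteq U$ and $b \in B \setminus U$, so $b$ precedes $a$ in $L_1^{\ast}$ and the pair is reversed. This yields $\dim_P(A, B) \le d$, completing the proof. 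The only real obstacle is the one-line structural observation that $P \setminus U$ sits ``below'' $U$ in $P$; once that is in hand, the block-concatenation is a standard move and the remaining verifications are pure bookkeeping.
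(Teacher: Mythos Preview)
Your proof is correct and uses essentially the same idea as the paper: both exploit that $U=\Up_P(A)$ is an upset of $P$, so placing all of $P\setminus U$ below $U$ via a block concatenation still yields a linear extension that automatically reverses every $(a,b)$ with $b\notin U$. The only cosmetic difference is that the paper prepends a fixed linear extension $L^0$ of $P-U$ to each of the $d$ extensions (taken on the subposet induced by $U$), whereas you take extensions of all of $P$ and re-shuffle only $L_1$; both executions are equivalent.
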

\begin{proof}
  The inequality \(\dim_P(A, B) \ge \dim_P(A,  B \cap \Up_P(A))\) is trivial.
  For the proof of the other inequality, let \(Q\) denote the subposet of \(P\) induced by \(\Up_P(A)\),
  let \(d = \dim_P(A,  B \cap \Up_P(A)) = \dim_Q(A, B \cap \Up_P(A))\),
  and let \(L_1\), \ldots, \(L_d\) be linear extensions of \(Q\) which
  together reverse all pairs in \(\Inc_P(A,  B \cap \Up_P(A))\).
  Let \(L^0\) be a linear extension of \(P - \Up_P(A)\).
  Now the linear orders \([L^0 < L_1]\), \ldots, \([L^0 < L_d]\)
  are linear extensions of \(P\) which reverse all pairs in \(\Inc_P(A, B)\), so
  \(\dim_P(A, B) \le d\). The lemma follows.
\end{proof}

\begin{proof}[Proof of Lemma~\ref{lem:doubly-exposed-k-outerplanar}]

Let \( P \) be a poset with a \( k \)-outerplanar cover graph.
When \(\dim(P) \le 4k\), we can take \(Q = S_2\), \(x_0 = a_1\), \(y_0 = b_1\) and \(I = \{(a_2, b_2)\}\),
so that \(\cover(Q)\) is \(1\)-outerplanar and \(\dim(P) \le 4k = 4k \cdot \dim_Q(I)\).
Therefore we assume that
\[ \dim(P) > 4k.\]
Addition of degree-\(1\) vertices to a \(k\)-outerplanar graph preserves its \(k\)-outerplanarity, so
by Lemma~\ref{lem:min-max-reduction} we may assume that
\[
  \dim(P) = \dim(\Min(P), \Max(P)).
\]
and by Lemma~\ref{lem:connected} we may assume that \(P\)
is connected.

Let us fix a \(k\)-outerplanar drawing of \(\cover(P)\).
Let \(Q\) be the convex subposet of \(P\)
obtained by applying Lemma~\ref{lem:unfolding-planar}.
In particular, we have
\[
  \dim(\Min(P), \Max(P)) \le 2 \cdot \dim_Q(\Min(Q), \Max(Q)).
\]
Denote the layering of the induced \(k\)-outerplanar drawing
of \(\cover(Q)\) by \((V_1, V_2, \ldots)\).
We have \(\Max(Q) \subseteq \Up_Q(V_1)\) or \(\Min(Q) \subseteq \Down_Q(V_1)\).
Without loss of generality, we assume that \(\Max(Q) \subseteq \Up_Q(V_1)\).
Since the drawing of \(\cover(Q)\) is \(k\)-outerplanar,
we have \(V_i = \emptyset\) for \(i \ge k + 1\).

For every \(a \in \Min(Q)\), let \(\alpha(a)\) denote the least \(i \in [k]\)
such that \(\Up_Q(a) \cap V_i \neq \emptyset\), and
for each \(i \in [k]\), let \(A_i\) denote the set of all \(a \in \Min(Q)\),
such that \(\alpha(a) = i\).
The sets \(A_1\), \ldots, \(A_k\) partition \(\Min(Q)\) and therefore
\[
  \dim_Q(\Min(Q), \Max(Q)) \le
  \sum_{i=1}^k \dim_Q(A_i, \Max(Q)). 
\]
For each \(i \in [k]\), let \(Q_i\) denote the subposet of \(Q\)
induced by \(\Up_Q(A_i)\).
We have \(\Min(Q_i) = A_i\) and \(\Max(Q_i) = \Max(Q) \cap \Up_P(A_i)\),
so by Lemma~\ref{lem:dimAB} for each \(i \in [k]\) we have
\[
  \dim_Q(A_i, \Max(Q)) = \dim_{Q_i}(\Min(Q_i), \Max(Q_i))
\]

Consider any of the posets \(Q_i\).
By definition of \(A_i\), all elements of \(Q_i\) belong to
\(V_i \cup \cdots \cup V_k\) and we have \(A_i = \Min(Q_i) \subseteq \Down_Q(V_i)\).
Any witnessing path from \(a\) to an element of \(V_i\)
has to contain a vertex lying on the exterior face of \(\cover(Q_i)\).
Hence, every element of \(\Min(Q_i)\) is comparable with a vertex lying
on the exterior face of \(\cover(Q_i)\).
Furthermore, we have \(\Max(Q_i) \subseteq \Up_Q(V_1)\),
so a similar argument shows that every element of
\(\Max(Q_i)\) is comparable with a vertex lying
on the exterior face of \(\cover(Q_i)\).

Let \(Q'\) denote a poset among \(Q_1\), \ldots, \(Q_k\)
for which \(\dim_{Q'}(\Min(Q'), \Max(Q'))\)
is largest. 
Summarizing, we have
\begin{align*}
\dim(P) &= \dim_P(\Min(P), \Max(P))\\
&\le 2 \cdot \dim_Q(\Min(Q), \Max(Q))\\
&\le 2 \cdot \sum_{i=1}^k \dim_Q(A_i, \Max(Q))\\
&= 2 \cdot \sum_{i=1}^k \dim_{Q_i}(\Min(Q_i), \Max(Q_i))\\
&\le 2k \cdot \dim_{Q'}(\Min(Q'), \Max(Q')).    
\end{align*}

By our assumption, we have \(\dim(P) > 4k\), so
\(\dim_{Q'}(\Min(Q'), \Max(Q')) \ge 3\).
By Lemma~\ref{lem:connected}, there exists a component
\(Q''\) of \(Q'\) such that
\[\dim_{Q'}(\Min(Q'), \Max(Q')) = \dim_{Q''}(\Min(Q''), \Max(Q'')).\]
Let us fix such a component \(Q''\), and
let \(V_1''\) denote the set of vertices on
the exterior face of \(\cover(Q'')\).
Note that the drawing of \(\cover(Q')\) is \(k\)-outerplanar,
and we have \(\Max(Q'') \subseteq \Up_{Q''}(V_1)\) and \(\Min(Q'') \subseteq \Down_{Q''}(V_1'')\).

We may assume that \(Q''\) contains a minimal element on the
exterior face of its cover graph.
If this is not the case, we may simply introduce a new miminal element
covered by a single vertex which lies on the exterior face of \(\cover(Q')\).
We apply Lemma~\ref{lem:unfolding-SQ} to \(Q''\) and a minimal element on the exterior face
to obtain convex subposets
\(S\) and \(R\) of \(Q''\) with \(\Min(R) \subseteq \Min(Q'')\) and \(\Max(R) \subseteq \Max(Q'')\)
such that
\[
  \dim(P) \le 2k \cdot \dim_{Q''}(\Min(Q''), \Max(Q'')) \le 4k \cdot \dim_{R}(\Min(R), \Max(R)),
\]
\(S\) is a component of \(Q'' - R\) containing a vertex from the exterior face of \(\cover(Q'')\), and either
\(\Max(R) \subseteq \Up_{Q''}(S)\) and \(\Down_{Q''}(S) \cap R = \emptyset\),
or \(\Min(R) \subseteq \Down_{Q''}(S)\) and \(\Up_{Q''}(S) \cap R = \emptyset\).
Without loss of generality, we assume that the former holds, that is:
\[
\Max(R) \subseteq \Up_{Q''}(S)
\quad\textrm{and}\quad
\Down_{Q''}(S) \cap R = \emptyset.
\]

We add two elements to the poset \(R\): a minimal
element \( x_0 \) below all elements of \(R\) which cover an element of \(S\),
and a maximal element \(y_0\) above all vertices on the exterior face of \(\cover(Q'')\)
which do not belong to \(S\).
Formally, let \(R'\) be a post obtained from \(R\) by adding a minimal element \(x_0\) and a maximal element \(y_0\) such that in \(R'\), for every element \(z\) we have
\begin{align*}
&x_0 < z \textrm{ if and only if }z \in (\Up_{Q''}(S) \cap R) \cup \{y_0\},\textrm{ and }\\
&z < y_0 \textrm{ if and only if }z \in (\Down_{Q''}(V_1'') \cap R) \cup \{x_0\}.
\end{align*}
In the poset \(R'\), the only elements which can cover \(x_0\) are \(y_0\) and the vertices adjacent to \(S\)
in \(\cover(Q'')\),
whereas the only elements which can be covered by  \(y_0\) are \(x_0\) and the vertices from the exterior face
of \(\cover(R)\) which do not belong to \(S\).
Hence the induced drawing of \(\cover(R)\) can be extended to a drawing
of \(\cover(R')\) with \(x_0\) and \(y_0\) on the exterior face.
Such a drawing witnesses that \(\cover(R')\) is \((k+1)\)-outerplanar
(after removing the vertices from the exterior face we are left with an induced \(k\)-outerplanar drawing
of a subgraph of \(\cover(R)\)).

The set \(\Inc_{R'}(\Min(R), \Max(R))\) is doubly exposed by \((x_0, y_0)\) in \(R'\):
for every \((a, b) \in I\), we have
\[
  b \in \Max(R) \subseteq \Up_{Q'}(S) \cap R \subseteq \Up_{Q''}(x_0), \text{ and} \\
\]
\[
    a \in \Min(R) \subseteq \Down_{Q'}(\Max(R)) \setminus S \subseteq \Down_{Q''}(V' \setminus S) \subseteq \Down_{Q''}(y_0).
\]
Furthermore, we have \(\dim(P) \le 4k \cdot \dim_{R'}(\Min(R), \Max(R))\).
The only thing which prevents \(R'\) and \(\Inc_{R'}(\Min(R), \Max(R))\)
from satisfying the lemma is the requirement that
\(\Inc_{R'}(\Min(R), \Max(R))\) should  be a set of min-max pairs in \(R'\):
some maximal elements of \(R\) may be covered by
\(y_0\) in \(Q''\). We overcome this, by applying Lemma~\ref{lem:min-max-reduction-plus}
to \(R'\) and the sets \(A = \Min(R)\) and \(B = \Max(R)\).
Let \(P'\), \(A'\) and \(B'\) be the resulting poset and sets,
so that \(\dim_{Q''}(\Min(R), \Max(R)) \le \dim_{P'}(A', B')\).
Let \(I = \Inc_{P'}(A', B') \subseteq \Inc_{P'}(\Min(P'), \Max(P'))\).
This way,  by the item~\eqref{itm:mmp-iii}
of Lemma~\ref{lem:min-max-reduction-plus}, the set \(I\) is doubly exposed by \((x_0, y_0)\) in \(P'\) , and 
the cover graph of \(P'\) is \((k+1)\)-outerplanar as it is obtained by adding degree-\(1\) vertices to \(\cover(R')\).
The proof of Lemma~\ref{lem:doubly-exposed-k-outerplanar} is complete.
\end{proof}

\section{Kelly subposets in doubly exposed posets}\label{sec:kelly-subposets}

This section is devoted to proving Lemma~\ref{lem:se-vs-kelly}. The setting of this lemma
is the same as in~\cite[Lemma 16]{kozik2019dimension}, and the initial part of our
proof overlaps with the proof from~\cite{kozik2019dimension},
but the core of our argument requires us to investigate
the structure of doubly exposed standard examples in much greater detail, and is completely independent
of the work in~\cite{kozik2019dimension}.

We work with doubly exposed standard examples in a poset
with a planar cover graph. When a standard example is doubly exposed by a pair
\((x_0, y_0)\), we find it convenient to work in a setting
where the vertices \(x_0\) and \(y_0\) are of degree \(1\) in the cover graph.
After slightly modifying the poset, we can ensure that this is the case: 
If a standard example is doubly exposed by a pair \((x, y)\), we can
add elements \(x_0\) and \(y_0\) such that \(x_0\) is a minimal element covered
only by \(x\) and \(y_0\) is a maximal element which covers only \(y\).
The cover graph of the modified poset can be obtained from the cover graph
of the original poset by attaching new degree-\(1\) vertices \(x_0\) and \(y_0\)
to \(x\) and \(y\) respectively. Hence, each standard example which is doubly exposed
by \((x, y)\) in the original poset, is doubly exposed by \((x_0, y_0)\) in
the new poset.

Therefore, throughout this section we assume that \(P\) is a fixed poset with a planar
cover graph \(G\) and we fix a planar drawing of \(G\) and two elements
\(x_0 \in \Min(P)\) and \(y_0 \in \Max(P)\) which lie on the exterior face of
the drawing and which have degree \(1\) in \(G\).
We assume that \(x_0\) is drawn at the bottom
and \(y_0\) on top of the drawing. This does not play a role in the proof, but
justifies the notions of left and right introduced later in a proof.

We note that in the proof of Lemma~\ref{lem:se-vs-kelly} we need to find
a Kelly subposet which does not contain the newly added elements
\(x_0\) and \(y_0\), as they do not belong to the original poset.

We call a path with the endpoints \(u\) and \(v\) a \emph{\(u\)--\(v\) path}. 
For a tree \(R\) and two vertices \(u\) and \(v\) of \(R\),
we denote by \(u R v\) the unique \(u\)--\(v\) path in \(R\).
More generally, if trees
\(R_1\), \ldots, \(R_p\) are subgraphs of \(G\) and
\(u_0\), \ldots \(u_p\) are vertices such that
\(\{u_{i-1}, u_i\} \subseteq V(R_i)\) for \(i \in [p]\),
then we let \(u_0 R_1 \cdots R_p u_p\) denote the union
of the paths \(u_{i-1} R_i u_i\) with \(i \in [p]\).
Whenever we use this notation, it denotes a
path or a cycle.

Let \(A = \Down_P(y_0) \cap \Min(P)\) and
\(B = \Up_P(x_0) \cap \Max(P)\), so that
\(\Inc(A, B)\) is the maximal subset of \(\Inc(\Min(P), \Max(P))\)
which is doubly exposed by \((x_0, y_0)\).
Let us fix a rooted tree \(T\) which is a subgraph of \(G\) such that
\begin{enumerate}
    \item the root of \(T\) is \(x_0\),
    \item the set of leaves of \(T\) is \(B\), and
    \item for every \(b \in B\), the path \(x_0 T b\)
    is a witnessing path from \(x_0\) to \(b\).
\end{enumerate}
Analogously,
let us fix a rooted tree \(S\) which is a subgraph of \(G\) such that
\begin{enumerate}
    \item the root of \(S\) is \(y_0\),
    \item the set of leaves of \(S\) is \(A\),
    and
    \item for every \(a \in A\), the path \(a S y_0\)
    is a witnessing path from \(a\) to \(y_0\).
\end{enumerate}
Following the terminology from%
~\cite{kozik2019dimension}, we refer to \(T\) as the \emph{blue tree}
and to \(S\) as the \emph{red tree}.
The blue and red trees may intersect, but in
one of the first steps of the
proof, we will show that from any doubly exposed standard example you can select
a standard example of linear size such that the parts of \(T\) and \(S\) corresponding to it are disjoint.
But first, let us introduce some more notation and basic properties of doubly exposed
standard examples.

Since the root of \(T\) has only
one child, the drawing of \(G\) determines
partial ``clockwise'' orderings of the vertex sets of \(T\).
Formally, we define a partial
order \(\prec_T\) on \(V(T)\) as follows. Let \(u\) and \(v\) be two
distinct nodes of \(T\). If one of those nodes is an
ancestor of the other, the elements \(u\) and \(v\)
are considered incomparable in \(\prec_T\). Otherwise, let
\(w\) denote the lowest common ancestor of \(u\) and
\(v\) in \(T\). We write \(u \prec_T v\) when
in the drawing of \(G\) the paths \(w T x_0\),
\(w T u\) and \(w T v\) leave the vertex \(w\)
in that clockwise order. Otherwise, when these
three paths leave the vertex \(w\) in the anticlockwise
order, we write \(v \prec_T u\).
Since \(x_0\) has only one child in \(T\), the
path \(x_0 T w\) is nontrivial and thus the order
is well-defined.
Note that since the elements of \(B\) are leaves of \(T\),
they are linearly ordered by \(\prec_T\)
The ``clockwise'' order \(\prec_S\) on \(V(S)\) is defined
analogously.
See Figure~\ref{fig:red blue}.

\begin{figure}
    \centering
    \includegraphics{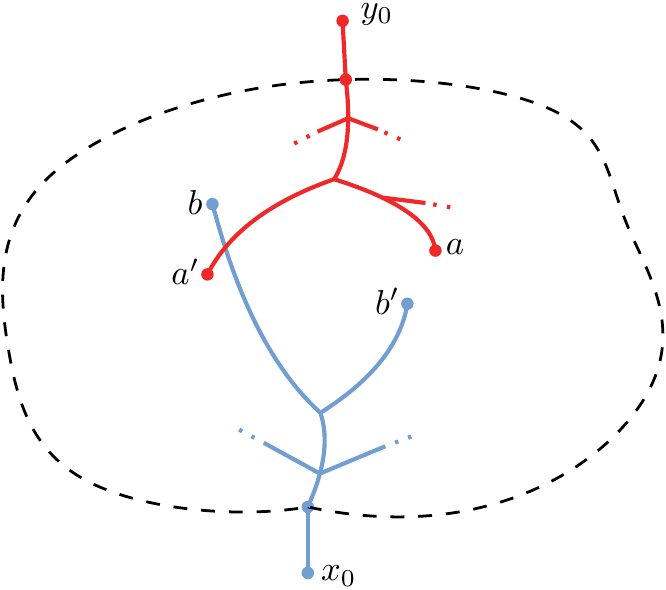}
    \caption{\(b \prec_T b'\) and \(a \prec_S a'\).}
    \label{fig:red blue}
\end{figure}

For every pair of elements
\(a \in A\) and \(b \in B\) with \(a < b\) in \(P\),
fix a witnessing path \(W = W(a, b)\) and two vertices
\(v = v(a, b)\) and \(u = u(a, b)\) on the path \(W\) such that 
\(W = a S v W u T b\)
and the length of the subpath \(v W u\) is smallest possible.
This is well-defined, as we can always take \(v = a\) and \(u = b\).
This way the subpath \(v W u\) is internally disjoint from the
paths \(a S y_0\) and \(x_0 T b\), and if the paths \(a S y_0\) and \(x_0 T b\) intersect,
then \(u = v\).
We let
\[N(a, b) = x_0 T u W v S y_0.\]
See Figure~\ref{fig:N path}.
The path \(x_0 T u\) is called the \emph{blue} portion of \(N(a, b)\),
the path \(u W v\) is called the \emph{black} portion of \(N(a, b)\),
and the path \(v S y_0\) is called the \emph{red} portion of \(N(a, b)\).
Note that the interior of the black portion of the path \(N(a, b)\) may contain vertices
of the trees \(T\) and \(S\).
\begin{figure}
    \centering
    \includegraphics{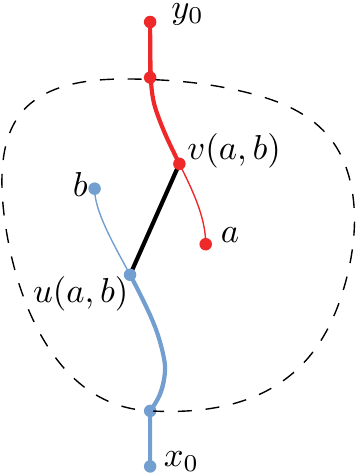}
    \caption{The bolded \(x_0\)--\(y_0\) path is \(N(a, b)\).}
    \label{fig:N path}
\end{figure}

Every cycle \(C\) in \(G\) is represented by a closed curve in the drawing and removing the points on that curve splits the plane into two
parts, one bounded and one
unbounded.
We refer to the bounded part together with the points
on the curve representing \(C\) as the \emph{region
bounded by} \(C\).
When referring to the bounded part without the points
lying on the curve, we explicitly
write about the interior of the region bounded by \(C\).
Clearly, if a connected subgraph of \(G\) contains
a vertex in the region bounded by \(C\)
and a vertex outside the interior of the region bounded
by \(C\), then that subgraph intersects the cycle \(C\).

For any two elements
\(b, b' \in B\), we say that \(b\) is \emph{enclosed by} 
\(b'\) if there exists a cycle \(C\) in \(G\) with \(V(C) \subseteq \Down_P(b')\)
such that \(b\) lies in the region
bounded by \(C\). Analogously, for any two
elements \(a, a' \in A\), we say that \(a\) is
\emph{enclosed by} \(a'\)
if there exists a cycle \(C\) in \(G\) with
\(V(C) \subseteq \Up_P(a')\) such that \(a\) lies in the region bounded by \(C\).

\begin{lemma}\label{lem:enclosed}
  If a standard example \(\{(a_1, b_1), \ldots, (a_n, b_n)\} \subseteq \Inc(A, B)\)
  is doubly exposed by \((x_0, y_0)\),
  then there do not exist distinct \(i, j \in [n]\) such that
  \(b_i\) is enclosed by \(b_j\) or \(a_i\) is enclosed by \(a_j\).
\end{lemma}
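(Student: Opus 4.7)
The plan is to derive a contradiction. Suppose without loss of generality that $b_i$ is enclosed by $b_j$ for some distinct $i, j \in [n]$; the case where $a_i$ is enclosed by $a_j$ will follow by the symmetric argument obtained by swapping the roles of $A$ and $B$ and of the trees $S$ and $T$. I would fix a cycle $C$ in $G$ with $V(C)\subseteq \Down_P(b_j)$ such that $b_i$ lies in the region bounded by $C$. Note first that since $x_0$ and $y_0$ are degree-$1$ vertices of $G$ lying on the exterior face, they cannot lie on any cycle and in particular lie strictly outside the region bounded by $C$. Also, if $b_i$ happened to lie on $C$, then $b_i \le b_j$ would follow, and since $b_i$ is maximal this would force $b_i = b_j$, contradicting $i\ne j$; so $b_i$ lies in the interior of the region bounded by $C$.

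The heart of the plan is to locate $a_j$ relative to $C$ and force some witnessing path to meet $C$ at a vertex $w$ with $a_j \le w \le b_j$, contradicting the incomparability of the pair $(a_j, b_j)$. I would split into three subcases. If $a_j$ lies on $C$, then $a_j \in \Down_P(b_j)$ directly. If $a_j$ lies in the interior of the region bounded by $C$, I would consider the witnessing path $a_j S y_0$ in the red tree: since $y_0$ lies outside the region, this path must cross $C$ at some vertex $w$, and any such $w$ satisfies $w \in \Up_P(a_j) \cap \Down_P(b_j)$. Otherwise $a_j$ lies strictly outside the region; using that $a_j < b_i$ (which holds since $i \ne j$ in a standard example), I would take any witnessing path from $a_j$ to $b_i$ in $P$ and observe that it joins a vertex outside $C$ to the interior vertex $b_i$, hence crosses $C$ at a vertex $w \in \Up_P(a_j) \cap \Down_P(b_j)$.

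I do not expect this argument to present a serious obstacle. The only planar input needed is the observation already recorded in this section that a connected subgraph meeting both the interior of the region bounded by $C$ and the complement of the region must intersect $C$; once that is in hand the three subcases are uniform. The symmetric statement, that $a_i$ cannot be enclosed by $a_j$, runs word for word in the same way, with the enclosing cycle now lying in $\Up_P(a_j)$, and with the paths $x_0 T b_j$ and a witnessing path from $a_i$ to $b_j$ playing the roles that $a_j S y_0$ and a witnessing path from $a_j$ to $b_i$ play above.
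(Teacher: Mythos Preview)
Your argument is correct and follows the same overall strategy as the paper: find a vertex $w$ on $C$ with $a_j \le w$, whence $a_j \le w \le b_j$ contradicts incomparability. The paper avoids your three-way case split on the location of $a_j$ by observing in one stroke that $\Up_P(a_j)$ induces a connected subgraph of $G$ containing both $b_i$ (inside the region bounded by $C$, since $a_j < b_i$) and $y_0$ (outside, since $y_0$ is on the exterior face), and hence must meet $C$. Your cases are in effect unpacking this single connectedness observation into explicit paths; both routes work, but the paper's is a little slicker and makes the symmetric statement for the $a_i$'s immediate without having to re-enumerate cases.
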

\begin{proof}
  We only prove that \(b_i\) is not enclosed by \(b_j\), as
  the proof for \(a_i\) and \(a_j\) is completely analogous.
  Suppose towards a contradiction, that there exist distinct 
  \(i, j \in [m]\) and a cycle \(C\) in \(G\)
  with \(V(C) \subseteq \Down_P(b_j)\) such that \(b_i\) lies
  in the region bounded by \(C\).
  Since \(y_0\) lies on the exterior face of the drawing of \(G\),
  it does not lie in the interior of the region bounded by
  \(C\).
  As \(\Up_P(a_j)\) induces a connected subgraph of \(G\),
  containing both \(b_i\) and \(y_0\), it must intersect
  the cycle \(C\) in a vertex \(w\).
  Since \(V(C) \subseteq \Down_P(b_j)\), we have
  \(a_j \le w \le b_j\) in \(P\), a contradiction.
\end{proof}
\begin{lemma}\label{lem:three-paths}
  Let \(\{(a_1, b_1), \ldots, (a_n, b_n)\} \subseteq \Inc(A, B)\)
  be a standard example,
  let \(i, j, k \in [n]\), and let \(W\) be a
  witnessing path in \(P\).
  \begin{enumerate}
      \item\label{itm:three-paths-b}
      If \(b_i \prec_T b_j \prec_T b_k\) and \(W\) intersects
      both \(x_0 T b_i\) and \(x_0 T b_k\), then \(W\) intersects \(x_0 T b_j\).
      \item\label{itm:three-paths-a}
      If \(a_i \prec_S a_j \prec_S a_k\) and \(W\) intersects
      both \(a_i S y_0\) and \(a_k S y_0\), then \(W\) intersects \(a_j S y_0\).
  \end{enumerate}
\end{lemma}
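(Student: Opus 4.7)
My plan is to prove Part 1 and to obtain Part 2 by a symmetric argument in the red tree \(S\), replacing \(T, x_0\) and downsets with \(S, y_0\) and upsets. I will argue by contradiction: assume \(W\) meets \(x_0 T b_i\) and \(x_0 T b_k\) but not \(x_0 T b_j\). Since \(W\) is a witnessing path, its vertex set is a chain in \(P\), so I may pick \(u \in V(W) \cap V(x_0 T b_i)\) and \(v \in V(W) \cap V(x_0 T b_k)\) such that the subpath \(W[u, v]\) has its interior disjoint from \(x_0 T b_i \cup x_0 T b_k\); by swapping the roles of \(i\) and \(k\) if needed (which preserves the hypothesis with \(b_j\) still in the middle), we may also assume \(u \le v\) in \(P\). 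Let \(C\) be the closed walk formed by \(W[u, v]\) together with the unique \(u\)--\(v\) path in \(T\); after cutting out any repeated portion we may treat \(C\) as a simple cycle.

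The plan is to show that (a) \(V(C) \subseteq \Down_P(b_k)\) and (b) \(b_j\) lies strictly inside the region bounded by \(C\); together these say precisely that \(b_j\) is enclosed by \(b_k\), contradicting Lemma~\ref{lem:enclosed}. Claim (a) is straightforward: vertices of \(W[u, v]\) form a chain ending at \(v \le b_k\), so they all lie in \(\Down_P(b_k)\); the tree portion of \(C\) splits at the lowest common ancestor \(m\) of \(u\) and \(v\) in \(T\), with the \(u\)-to-\(m\) piece contained in \(x_0 T u \subseteq \Down_P(u) \subseteq \Down_P(v)\) and the \(m\)-to-\(v\) piece contained in \(x_0 T v \subseteq \Down_P(v)\).

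For claim (b), let \(w\) denote the lowest common ancestor of \(b_i, b_j, b_k\) in \(T\). The assumption \(W \cap x_0 T b_j = \emptyset\) forces \(u, v\) to lie strictly below \(w\), while the condition \(b_i \prec_T b_j \prec_T b_k\) rules out the configuration in which \(b_i\) and \(b_k\) share a strictly deeper common ancestor than \(w\); hence \(b_i\) and \(b_k\) lie in distinct subtrees at \(w\) and so \(m = w\). At \(w\), the clockwise rotation places the tree-edges toward \(x_0, b_i, b_j, b_k\) in that cyclic order, and the two \(C\)-edges at \(w\) split the local neighborhood into a sector containing the \(x_0\)-direction and another containing the \(b_j\)-direction. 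Since \(x_0\) lies on the exterior face, the \(x_0\)-sector is exterior to \(C\) and the \(b_j\)-sector is interior. Following \(x_0 T b_j\) from \(w\) toward \(b_j\), it either leaves \(w\) straight into the interior, or first runs along \(C\) up to a deeper common ancestor of \(b_j\) with \(b_i\) or \(b_k\) and diverges there; in the latter case the analogous local clockwise-sector analysis at the divergence point again places the \(b_j\)-side inside \(C\), and from that point on \(x_0 T b_j\) cannot leave the interior without intersecting \(W\), which by assumption it does not.

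The main obstacle will be this last step: in the subcases where \(x_0 T b_j\) partially coincides with \(C\) near a deeper common ancestor of \(b_j\) with \(b_i\) or \(b_k\), a single global Jordan-curve argument does not suffice, and the local orientation at the divergence point must be verified using the \(\prec_T\) order together with the exterior-face position of \(x_0\).
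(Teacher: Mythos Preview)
Your approach is essentially the paper's: take a minimal subpath \(W'\) of \(W\) with one end \(u_i\) on \(x_0 T b_i\) and the other end \(u_k\) on \(x_0 T b_k\), form the cycle \(C = u_i T u_k \cup W'\), show \(V(C)\) lies in \(\Down_P(b_i)\) or \(\Down_P(b_k)\) (depending on the direction of \(W'\)), and conclude that \(b_j\) is enclosed, contradicting Lemma~\ref{lem:enclosed}. Your swap of \(i\) and \(k\) to force \(u\le v\) replaces the paper's two-case split; this is harmless, and your verification that \(m=w\) and that \(C\) is already a simple cycle (no ``cutting out'' needed, since \(uTw\subseteq x_0Tb_i\) and \(wTv\subseteq x_0Tb_k\) while the interior of \(W[u,v]\) avoids both) is fine.

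Where you diverge is in claim (b). You route the argument through the lowest common ancestor \(w\) of \(b_i,b_j,b_k\) and then attempt a local sector analysis, which forces you into the sub-case where \(x_0Tb_j\) runs along \(C\) for a stretch before diverging; you correctly flag this as the obstacle. The paper sidesteps all of this with a single observation: since \(u\) and \(v\) lie on \(W\) and \(W\cap x_0Tb_j=\emptyset\), neither \(u\) nor \(v\) is an ancestor of \(b_j\); combined with \(b_i\prec_T b_j\prec_T b_k\) (or its reverse after your swap) and the fact that \(u,v\) are ancestors of \(b_i,b_k\) respectively, this gives \(u\prec_T b_j\prec_T v\) (or \(v\prec_T b_j\prec_T u\)) directly. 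That ``betweenness'' in \(\prec_T\) is exactly the statement that the branch toward \(b_j\) leaves the tree-path \(uTv\) on the side opposite to \(x_0\), so \(b_j\) is in the bounded region of \(C\). No case split on where \(b_j\) shares a deeper ancestor is needed. Replacing your sector analysis with this one line recovers the paper's proof verbatim.
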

\begin{proof}
  The items~\eqref{itm:three-paths-b} and~\eqref{itm:three-paths-a} are symmetric,
  so we only prove~\eqref{itm:three-paths-b}.
  Let \(W'\) denote a shortest subpath of \(W\) which intersects both
  \(x_0 T b_i\) and \(x_0 T b_k\).
  Then \(W'\) is a path between the paths \(x_0 T b_i\) and \(x_0 T b_k\)
  which is internally disjoint from these paths.
  Let \(u_i\) and \(u_k\) denote the ends
  of \(W'\) lying on \(x_0 T b_i\) and \(x_0 T b_k\) respectively.
  Suppose towards a contradiction that \(W'\) is disjoint from
  \(x_0 T b_j\). This in particular implies that neither \(u_i\) nor
  \(u_k\) is an ancestor of \(b_j\) in \(T\).
  Hence \(u_i \prec_T b_j \prec_T u_k\) and \(b_j\) lies in the region
  bounded by the cycle \(C = u_i S u_k W' u_i\).
  Let \(w\) denote the lowest common ancestor of \(u_i\) and \(u_k\) in \(T\).
  If \(u_i < u_k\) in \(P\), then the cycle \(C\) is the union
  of the witnessing paths \(w T u_i W' u_k\) and \(w T u_k\),
  so \(V(C) \subseteq \Down_P(u_k) \subseteq \Down_P(b_k)\) and \(b_j\) is enclosed
  by \(b_k\).
  Otherwise, \(C\) is the union of the witnessing paths \(w T u_i\) and
  \(w T u_k W' u_i\), so \(V(C) \subseteq \Down_P(u_i) \subseteq \Down_P(b_i)\) and \(b_j\) is enclosed by \(b_i\).
  In both cases we obtain a contradiction with Lemma~\ref{lem:enclosed}.
\end{proof}

Since \(x_0\) and \(y_0\) lie on the exterior face of
\(G\), every \(x_0\)--\(y_0\) path \(N\) in \(G\)
splits \(G\) into two parts: ``left'' and ``right''.
Formally, let \(u \in V(G) \setminus V(N)\) and choose a path \(M\)
between \(u\) and a vertex \(w \in V(N)\) such that no internal vertex of \(M\) belongs to \(N\).
Note that \(w\) must be an internal vertex of \(N\)
as \(x_0\) and \(y_0\) are of degree one in \(G\).
Since the drawing of \(G\) is planar and the vertices
\(x_0\) and \(y_0\) lie on the exterior face,
either for every choice of \(M\) the paths
\(w N x_0\), \(w M u\) and \(w N y_0\) leave the
vertex \(w\) in that clockwise order, or for every choice
of \(M\) the paths \(w N x_0\), \(w M u\) and \(w N y_0\)
leave the vertex \(w\) in that anticlockwise order.
In the former case we say that \(u\) is \emph{left of}
\(N\) and in the latter case, we say that \(u\) is
\emph{right of} \(N\). For instance, in Figure~\ref{fig:N path},
\(a\) is right of \(N(a, b)\) and \(b\) is left of \(N(a, b)\).

\begin{lemma}\label{lem:ijk}
  Let \(\{(a_1, b_1), \ldots, (a_n, b_n)\} \subseteq \Inc(A, B)\)
  be a standard example and
  let \(i, j, k \in [n]\).
  If
  \(a_i \prec_S a_j\) and \(b_j \prec_T b_k\),
  then the intersection of the paths
  \(N(a_i, b_j)\) and \(N(a_j, b_k)\)
  contains a vertex which belongs to the red or the
  black portion of \(N(a_i, b_j)\) and
  to the black or the blue portion of
  \(N(a_j, b_k)\).
\end{lemma}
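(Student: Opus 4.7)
My plan is to prove Lemma~\ref{lem:ijk} by case analysis, dispatching two direct cases explicitly and then handling the remaining case with a planar-topological argument. For brevity I write \(u_1 = u(a_i, b_j)\), \(v_1 = v(a_i, b_j)\), \(u_2 = u(a_j, b_k)\), \(v_2 = v(a_j, b_k)\).

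First I handle two direct cases. If \(u_1\) is an ancestor of \(u_2\) in \(T\), then \(u_1\) lies on \(x_0 T u_2\), the blue portion of \(N(a_j, b_k)\); since \(u_1\) also sits at the junction between the blue and black portions of \(N(a_i, b_j)\), it lies in the black portion of \(N(a_i, b_j)\), and so \(u_1\) is the required intersection vertex. Symmetrically, if \(v_2\) is an ancestor of \(v_1\) in \(S\), then \(v_2\) lies in the red portion of \(N(a_i, b_j)\) and in the black portion of \(N(a_j, b_k)\).

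In the remaining case, neither ancestor relation holds, and I use planarity. The curve \(N(a_i, b_j)\) is an \(x_0\)--\(y_0\) path with both endpoints on the exterior face, so it partitions the rest of the drawing into a ``left'' and a ``right'' side. The key sub-claim is that \(a_j\) and \(b_k\) lie on opposite sides. To justify this, the case hypothesis on \(v\)'s combined with \(a_i \prec_S a_j\) places the subtree of \(S\) containing \(a_j\) on a specific side of the red portion \(v_1 S y_0\) of \(N(a_i, b_j)\), using the clockwise convention defining \(\prec_S\); symmetrically, the case hypothesis on \(u\)'s together with \(b_j \prec_T b_k\) puts \(b_k\) on the opposite side via the blue portion \(x_0 T u_1\). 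With \(a_j\) and \(b_k\) on different sides, the witnessing path \(W(a_j, b_k) = a_j S v_2 W' u_2 T b_k\) must cross \(N(a_i, b_j)\) at some vertex \(w\).

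The final step is to argue that \(w\) (or a suitable replacement) lies both on \(N(a_j, b_k)\) in its blue-or-black portion and on \(N(a_i, b_j)\) in its red-or-black portion. If \(w\) lies on the middle segment \(W'\), it is in the black portion of \(N(a_j, b_k)\) by definition, and Lemma~\ref{lem:enclosed} rules out \(w\) lying on the blue portion of \(N(a_i, b_j)\) (else a cycle through \(w\) would enclose \(b_j\) in the downset of \(b_k\), contradicting the standard-example assumption). If instead \(w\) lies on \(a_j S v_2\) or on \(u_2 T b_k\), I push the crossing to \(v_2\) or \(u_2\), again invoking Lemma~\ref{lem:enclosed} to forbid re-crossings that would force an enclosure, so that \(v_2\) or \(u_2\) inherits the required intersection. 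The main obstacle will be establishing the ``opposite sides'' sub-claim rigorously: \(\prec_S\) and \(\prec_T\) are defined via clockwise orders at LCA vertices, but \(N(a_i, b_j)\) actually passes through \(u_1\) and \(v_1\), which can lie strictly below or above those LCAs, so translating the local clockwise constraints into a global side classification requires careful analysis of the planar embedding around \(u_1\) and \(v_1\).
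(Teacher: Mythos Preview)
Your outline has the right instinct---a planar crossing argument---but the specific path you choose to cross \(N(a_i,b_j)\) is the wrong one, and this creates a real gap.

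You work with the witnessing path \(W(a_j,b_k)=a_jS\,v_2\,W'\,u_2\,T\,b_k\). But the lemma asks for a vertex on \(N(a_j,b_k)=x_0T\,u_2\,W'\,v_2\,S\,y_0\); the two paths share only the black segment \(u_2W'v_2\). Your sub-claim that \(a_j\) and \(b_k\) lie on opposite sides of \(N(a_i,b_j)\) need not hold: once \(u_1\prec_T u_2\), the first branch vertex \(u'\) of \(x_0Tu_2\) off \(x_0Tu_1\) is indeed not left of \(N(a_i,b_j)\), but the continuation \(u'Tb_k\) can cross the black or red portion of \(N(a_i,b_j)\) and end with \(b_k\) on the \emph{left}. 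In that scenario both \(a_j\) and \(b_k\) sit left of \(N(a_i,b_j)\), \(W(a_j,b_k)\) may avoid \(N(a_i,b_j)\) entirely, and your argument stalls. Even when a crossing \(w\) does exist, if it falls on \(u_2Tb_k\) it is not on \(N(a_j,b_k)\) at all, and there is no reason \(u_2\) itself lies on \(N(a_i,b_j)\), so ``pushing to \(u_2\)'' is unjustified.

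The paper avoids this by tracking a subpath of \(N(a_j,b_k)\) directly: it shows (i) \(v_2\) is not right of \(N(a_i,b_j)\), because every vertex of \(a_jSy_0\) meeting the blue or black portion of \(N(a_i,b_j)\) would force \(a_j\le b_j\); and (ii) the branch vertex \(u'\) on \(x_0Tu_2\) is not left of \(N(a_i,b_j)\), using \(u_1\prec_T u_2\) at the LCA. Hence the \(u'\)--\(v_2\) subpath of \(N(a_j,b_k)\) (which lies in its blue\(\cup\)black portion by construction) must meet \(N(a_i,b_j)\) at some \(w\); if \(w\) is on the blue piece \(u'Tu_2\), the choice of \(u'\) forces it off the blue portion of \(N(a_i,b_j)\), and if \(w\) is on the black piece \(u_2W'v_2\), the incomparability \(a_j\not\le b_j\) forces \(w\) onto the red portion of \(N(a_i,b_j)\). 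Replacing \(W(a_j,b_k)\) by this \(u'\)--\(v_2\) subpath is the missing idea in your plan.
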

\begin{proof}
  In the tree \(S\), 
  the vertex \(v(a_i, b_j)\) is an ancestor of
  \(a_i\) but not of \(a_j\) because that would imply
  \(a_j \le v(a_i, b_j) \le b_j\) in \(P\).
  As \(a_i \prec_S a_j\), this implies that
  \(v(a_i, b_j) \prec_S a_j\).
  Furthermore, the blue and the black parts of
  \(N(a_i, b_j)\) have all their vertices
  in \(\Down_P(b_j)\).
  Since \(a_j\) is incomparable with \(b_j\) in \(P\),
  this implies that no vertex of \(a_j S y_0\)
  is right of \(N(a_i, b_j)\).
  In particular,
  \(v(a_j, b_k)\) is not right of \(N(a_i, b_j)\).
  
  If \(u(a_i, b_j)\) is an ancestor of
  \(u(a_j, b_k)\) in \(T\), then
  \(u(a_i, b_j)\) lies on the black portion
  of \(N(a_i, b_j)\) and the blue portion of
  \(N(a_j, b_k)\), so the claim is satisfied.
  Hence we assume that \(u(a_i, b_j)\) is not
  an ancestor of \(u(a_j, b_k)\) in \(T\).
  Furthermore, \(u(a_j, b_k)\) is not an ancestor
  of \(u(a_i, b_j)\) as that would imply
  \(a_j \le u(a_j, b_k) \le u(a_i, b_j) \le b_j\)
  in \(P\).
  Since \(b_j \prec_T b_k\), this implies that
  \(u(a_i, b_j) \prec_T u(a_j, b_k)\).
  Let \(u'\) denote the least vertex of
  \(x_0 T u(a_j, b_k)\)
  which does not lie on \(x_0 T u(a_i, b_j)\).
  Since \(u(a_i, b_j) \prec_T u(a_j, b_k)\),
  \(u'\) is not left of
  \(N(a_i, b_j)\). (Possibly \(u'\) lies on the black
  or the red part of \(N(a_i, b_j)\).)
  Hence, the \(u'\)--\(v(a_j, b_k)\) subpath
  of \(N(a_j, b_k)\) intersects \(N(a_i, b_j)\)
  in a vertex \(w\). See Figure~\ref{fig:ijk-claim}.
  \begin{figure}
      \centering
      \includegraphics{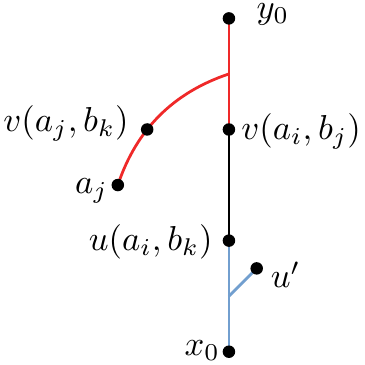}
      \caption{Illustration of Lemma~\ref{lem:ijk}.
      The vertical \(x_0\)--\(y_0\) path is \(N(a_i, b_j)\). The
      \(u'\)--\(v(a_j, b_k)\) subpath of \(N(a_j, b_k)\)
      has to intersect \(N(a_i, b_j)\).}
      \label{fig:ijk-claim}
  \end{figure}
  
  If \(w\) lies on the
  \(u'\)--\(u(a_j, b_k)\) subpath of
  \(N(a_j, b_k)\), then
  \(w\) lies on the blue part of \(N(a_j, b_k)\)
  and by definition of \(u'\) it does not
  lie on the blue part of \(N(a_i, b_j)\), so
  \(w\) satisfies the claim.
  Otherwise, \(w\) has to lie on the
  \(u(a_j, b_k)\)--\(v(a_j, b_k)\)
  subpath of \(N(a_j, b_k)\).
  Then \(w\) lies on the black portion of
  \(N(a_j, b_k)\) and on the red portion of
  \(N(a_i, b_j)\), lest \(a_j \le w \le b_j\)
  holds in \(P\).
  This completes the proof of the claim.
\end{proof}

\begin{lemma}\label{lem:iff}
  Let \(\{(a_1, b_1), \ldots, (a_n, b_n)\} \subseteq \Inc(A, B)\)
  be a standard example and let \(i, j \in [n]\).
  Then we have \(a_i \prec_S a_j\)
  if and only if \(b_i \prec_T b_j\).
\end{lemma}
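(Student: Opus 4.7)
The plan is to prove the lemma by a single application of Lemma~\ref{lem:ijk}, exploiting the symmetry between $i$ and $j$ so as to cover both directions. First, I would note that $a_i$ and $a_j$ are distinct leaves of the red tree $S$, so neither is an ancestor of the other in $S$, making them comparable in the linear order $\prec_S$; the same holds for $b_i, b_j$ in $\prec_T$. Consequently it suffices to establish the implication $a_i \prec_S a_j \Rightarrow b_i \prec_T b_j$, because swapping the roles of $i$ and $j$ in that statement immediately yields the converse.

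For this forward direction I would argue by contradiction. Suppose $a_i \prec_S a_j$ and $b_j \prec_T b_i$. Then Lemma~\ref{lem:ijk} applied with the triple $(i,j,k) := (i,j,i)$---whose hypotheses $a_i \prec_S a_j$ and $b_j \prec_T b_k$ are exactly what we have assumed---produces a vertex $w$ that lies both on $N(a_i, b_j)$ and on $N(a_j, b_i)$, and moreover belongs to the red or black portion of $N(a_i, b_j)$ and to the black or blue portion of $N(a_j, b_i)$.

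The heart of the argument is then a short case-check that collapses all four combinations to the same contradiction. The key observation is that any witnessing path between a minimal and a maximal element is monotonically increasing in $P$, since each of its edges is a cover relation. This monotonicity has two consequences. On the one hand, every vertex on the red portion $v(a_i,b_j)\,S\,y_0$ satisfies $a_i \le v(a_i,b_j) \le w$, and every vertex on the black portion $u(a_i,b_j)\,W(a_i,b_j)\,v(a_i,b_j)$ of the witnessing path from $a_i$ to $b_j$ likewise satisfies $a_i \le w$; hence in either subcase $a_i \le w$ in $P$. On the other hand, by the dual argument applied to $N(a_j, b_i)$, every vertex on its blue portion $x_0\,T\,u(a_j,b_i)$ satisfies $w \le u(a_j,b_i) \le b_i$, and every vertex on its black portion satisfies $w \le b_i$; hence in either subcase $w \le b_i$. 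Combining the two inequalities gives $a_i \le b_i$ in $P$, contradicting $(a_i,b_i) \in \Inc(A,B)$.

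The main obstacle I expect is purely bookkeeping: one has to verify carefully that walking along the blue, black, and red portions of $N(a,b)$ in the natural direction really does correspond to moving upward in $P$, so that membership in the appropriate portion immediately yields comparability with $a$ or with $b$. Once this monotonicity is unpacked from the definitions, the symmetric application of Lemma~\ref{lem:ijk} with $k=i$ finishes the argument without further structural work.
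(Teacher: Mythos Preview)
Your proposal is correct and follows exactly the same line as the paper's proof: assume \(a_i \prec_S a_j\) and \(b_j \prec_T b_i\), apply Lemma~\ref{lem:ijk} with \(k=i\), and deduce \(a_i \le w \le b_i\) from the vertex \(w\) lying in the red/black portion of \(N(a_i,b_j)\) and the black/blue portion of \(N(a_j,b_i)\). Your write-up is in fact slightly cleaner---you make the symmetry reduction explicit and correctly name the second path as \(N(a_j,b_i)\), whereas the paper's printed proof has a typo writing \(N(a_i,b_j)\) twice.
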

\begin{proof}
  Suppose to the contrary that the claim does not hold.
  Then there exists indices \(i, j \in [m]\)
  with \(a_i \prec_S a_j\) and \(b_j \prec_T b_i\).
  Apply Lemma~\ref{lem:ijk} with \(k = i\).
  All vertices on the red and the black portions of
  \(N(a_i, b_j)\) belong to \(\Up_P(a_i)\) and all
  vertices on the black and the blue portions of \(N(a_i, b_j)\)
  belong to \(\Down_P(b_i)\).
  Hence \(a_i \le w \le b_i\) holds in \(P\),
  a contradiction.
\end{proof}

By Lemma~\ref{lem:iff},
the pairs of any standard example in \(\Inc(A, B)\) can be listed in an
order \((a_1, b_1)\), \ldots, \((a_n, b_n)\) such that
\(a_1 \prec_S \cdots \prec_S a_n\) and \(
b_1 \prec_T \cdots \prec_T b_n\).

For a standard example \(I \subseteq \Inc(A, B)\),
we define an auxiliary digraph \(D(I)\) with the vertex set \(I\), where
a pair \((a, b) \rightarrow (a', b')\) is an arc of \(D(I)\) when the paths \(a S y_0\)
and \(x_0 T b'\) intersect.
A path \((a_1, b_1) \rightarrow \cdots \rightarrow (a_p, b_p)\) in \(D(I)\) is called
\emph{increasing} if \(b_1 \prec_T \cdots \prec_T b_p\) (and thus \(a_1 \prec_T \cdots \prec_T a_p\)) and \emph{decreasing} if
\(b_p \prec_T \cdots \prec_T b_1\) (and thus \(a_p \prec_T \cdots \prec_T a_1\)).
In Figure~\ref{fig:incr-6-path} you can see an increasing
path on \(6\) pairs.
\begin{figure}
    \centering
    \includegraphics{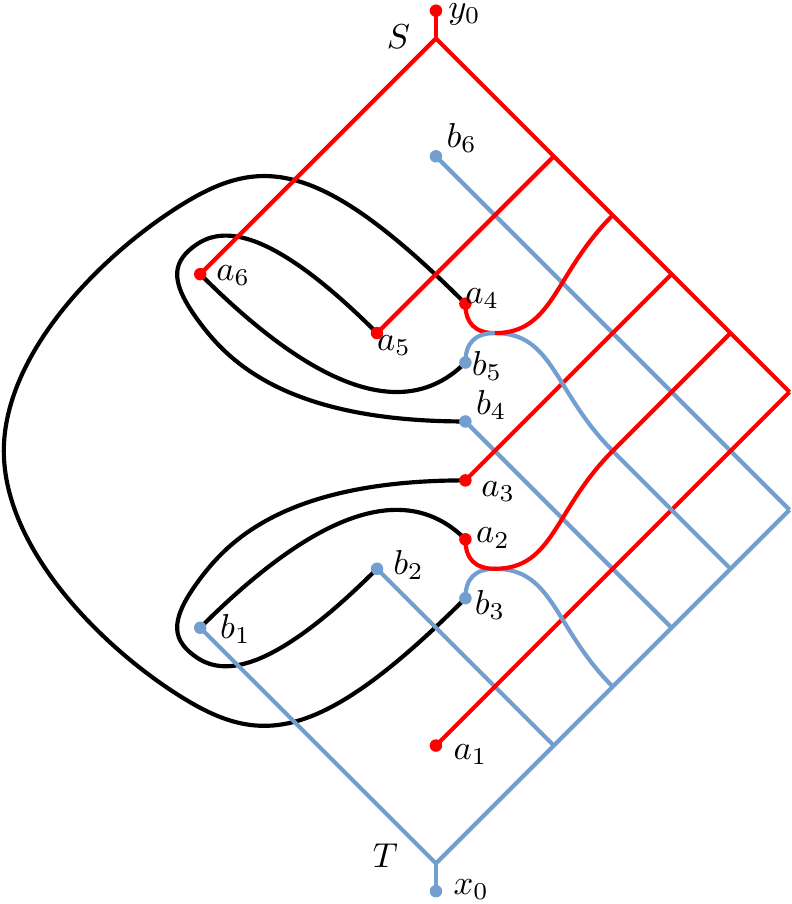}
    \caption{An increasing path \((a_1, b_1) \rightarrow \cdots
    \rightarrow (a_6, b_6)\).
    The paths \(W(a_{i+1}, b_i)\) are drawn to the left and curved.
    The union of these paths and the trees \( S \) and \( T \) contains
    a witnessing path from \(a_i\) to \(b_j\) for each pair of distinct \(i\)
    and \(j\).
    }
    \label{fig:incr-6-path}
\end{figure}

\begin{lemma}\label{lem:incr-path}
  For every standard example \(I \subseteq \Inc(A, B)\),
  every increasing or decreasing path in the digraph
  \(D(I)\) consists of at most \(6\) pairs.
\end{lemma}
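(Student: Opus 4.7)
The plan is to argue by contradiction: assume there exists an increasing path $(a_1, b_1) \rightarrow \cdots \rightarrow (a_p, b_p)$ in $D(I)$ with $p \ge 7$, and extract a planarity violation. The decreasing case is symmetric, via the orientation-reversing reflection of the planar drawing that swaps $\prec_T$ and $\prec_S$ with their opposites but preserves all trees and paths used in the argument.

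First I would record the geometric data produced by the arcs. For every $j \in [p-1]$, pick a vertex $w_j \in a_jSy_0 \cap x_0Tb_{j+1}$, which exists by definition of an arc in $D(I)$ and satisfies $a_j \le w_j \le b_{j+1}$ in $P$. The composite $x_0$--$y_0$ path $N_j := x_0 T w_j S y_0$ then splits the drawing into a ``left'' and ``right'' side. Using Lemma~\ref{lem:three-paths}, the chain $b_1 \prec_T b_2 \prec_T \cdots \prec_T b_p$ forces the portions $x_0 T w_j$ to be nested in a controlled way, and similarly for the portions $w_j S y_0$ in $S$; hence the regions cut out by consecutive paths $N_1, N_2, \ldots, N_{p-1}$ admit a nested (linearly ordered) description of the plane.

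Second, the standard-example condition provides, for every pair $i > j$, a witness path $W(a_i, b_j)$ in $G$ whose endpoints lie ``backward'' with respect to the two tree orders. I would use Lemmas~\ref{lem:enclosed} and~\ref{lem:ijk} to show that every such backwards path is forced to live entirely on one side of each $N_k$ with $j \le k < i$ — specifically the side opposite to the $b_{k+1}$-branch of $T$ — since otherwise the subpath together with the trees would close up a cycle enclosing some $b_\ell$ or $a_\ell$ with $\ell \not\in\{i,j\}$ inside $\Down_P(b_i)$ or $\Up_P(a_j)$, contradicting Lemma~\ref{lem:enclosed}. This is the analogue for backward witnesses of the structural fact illustrated on the ``left'' of Figure~\ref{fig:incr-6-path}.

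Finally, with the backward witness paths $W(a_{j+1}, b_j)$ pinned to one side and the forward composite paths $N_j$ to the other, I would assemble the union of $T$, $S$, the paths $N_1, \ldots, N_{p-1}$, and an appropriately chosen subfamily of backward witness paths into a topological minor of $G$. For $p \ge 7$ this union should contain a subdivision of $K_{3{,}3}$, built by taking three branch vertices among the $w_j$'s (or among intersection vertices on $T$) and three among the $a_j$'s or $b_j$'s chosen far enough apart along the trees, with three internally disjoint paths connecting the two sides provided by the combination of tree branches and backward witness paths. Since $G$ is planar, this is a contradiction, and the constant $6$ corresponds exactly to the number of pairs that can be accommodated before such a $K_{3{,}3}$ subdivision is forced.

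The main obstacle is the topological bookkeeping of the backward witness paths: one must ensure, using the enclosure lemmas together with the nested structure of the $N_j$, that the paths chosen for the final $K_{3{,}3}$ subdivision are internally vertex-disjoint and that the precise count of $6$ (rather than a weaker bound like $10$) emerges. This requires tracking which regions of the plane each backward path is allowed to visit, which I expect to do by induction on the index $j$, using Lemma~\ref{lem:ijk} at each step to control the next intersection.
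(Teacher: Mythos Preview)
Your outline diverges substantially from the paper's argument and, as written, does not constitute a proof.

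The paper does not build a \(K_{3,3}\) minor. Instead it fixes a \emph{single} witnessing path \(W_0\) from \(x_0\) to \(y_0\) that passes through all the intersection vertices \(c_i = u(a_i,b_{i+1}) = v(a_i,b_{i+1})\); a key preliminary step (via Lemma~\ref{lem:ijk}) is that \(c_1 \le c_2 \le \cdots \le c_6\) in \(P\), which is what makes a single \(W_0\) possible. It then shows that all the ``backward'' witnessing paths \(W(a_j,b_i)\) with \(j>i\) are disjoint from \(W_0\) and lie on its left, establishes disjointness of the residual tree branches \(t_iTb_i\) and \(a_iSs_i\), and finally argues that the backward paths \(W(a_4,b_2)\) and \(W(a_6,b_4)\) must cross in a way that produces a cycle \(C\) with \(V(C)\subseteq\Down_P(b_7)\) enclosing some \(b_j\). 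The contradiction is with Lemma~\ref{lem:enclosed}, not with planarity via Kuratowski.

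Your plan has two concrete gaps. First, you work with a family of paths \(N_1,\ldots,N_{p-1}\) and assert they give a ``nested (linearly ordered) description of the plane''; but you never prove the chain relation among the \(w_j\), and without it the paths \(N_j\) can intersect in uncontrolled ways, so the nesting claim is unsupported. Second, the heart of your argument --- the actual \(K_{3,3}\) subdivision --- is never specified: you do not name the six branch vertices, you do not exhibit the nine paths, and you do not verify internal disjointness. You yourself flag this as ``the main obstacle'' and leave it unresolved, including the question of why the threshold is exactly \(7\) rather than some larger number. As it stands this is a speculative plan, not a proof; to make it work you would at minimum need the chain \(w_1\le\cdots\le w_{p-1}\), an explicit choice of branch vertices, and a disjointness argument for the connecting paths that does not itself rely on the enclosure machinery you are trying to avoid.
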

\begin{proof}
  Because of symmetry, it suffices to show that every increasing path has
  at most \(6\) vertices.
  Suppose to the contrary that \(D(I)\) contains an increasing path
  \((a_1, b_1) \rightarrow \cdots  \rightarrow (a_7, b_7)\).
  We have \(a_1 \prec_S \cdots \prec_S a_7\), \(b_1 \prec_T \cdots \prec_T b_7\),
  and \(a_i S y_0 \cap x_0 T b_{i+1} \neq \emptyset\) for \(1 \le i \le 6\).
  For every \(i\) with \(1 \le i \le 6\), let \(c_i = u(a_i, b_{i+1}) = v(a_i, b_{i+1})\)
  denote the only vertex of the black portion of the path
  \(N(a_i, b_{i+1})\).
  Clearly, we have \(a_i \le c_i \le b_{i+1}\) in \(P\).
  As the black portions of the paths
  \(N(a_i, b_{i+1})\) and \(N(a_{i+1}, b_{i+2})\) 
  are trivial for \(1 \le i \le 5\),
  Lemma~\ref{lem:ijk} applied to \(j = i+1\) and \(k = i+2\) implies that
  the red portion of
  \(N(a_i, b_{i+1})\) intersects the blue portion of
  \(N(a_{i+1}, b_{i+2})\),
  that is \(c_i S y_0 \cap x_0 T c_{i+1} \neq \emptyset\).
  In particular, we have
  \[
    x_0 \le c_1 \le \cdots \le c_6 \le y_0\textrm{ in \(P\).}
  \]

  Let \(W_0\) be a witnessing path from \(x_0\) to \(y_0\) containing
  the elements \(c_1\), \ldots, \(c_6\) which satisfies
  \(x_0 W_0 c_1 = x_0 T c_1\) and \(c_6 W_0 y_0 = c_6 S y_0\).
  For each \(i\) with \(1 \le i \le 7\), let \(t_i\) denote the greatest element of
  \(W_0\) which lies on \(x_0 T b_i\) and let \(s_i\) denote the least
  element of \(W_0\) which lies on \(a_i S y_0\).
  As \(c_i\) lies on both \(x_0 T b_{i+1}\) and \(a_i S y_0\),
  we have \(s_i \le c_i \le t_{i+1}\) in \(P\) for
  \(1 \le i \le 6\).
  Furthermore, we have \(t_i < s_i\) in \(P\) for \(1 \le i \le 7\)
  as otherwise we would have \(a_i \le s_i \le t_i \le b_i\) in \(P\).
  Hence
  \[
    x_0 \le t_1 < s_1 \le c_1 \le t_2 < s_2 \le c_2 \le \cdots \le c_6 \le t_7 < s_7 \le y_0
  \]
  holds in \(P\).

  Let us now prove a sequence of subclaims.

  \begin{nclaim}\label{clm:disjoint-path}
    For \(1 \le i < j \le 7\), the witnessing path \(W(a_j, b_i)\) is
    disjoint from \(W_0\).
  \end{nclaim}
  \begin{subproof}
  Suppose to the contrary that \(W(a_j, b_i)\) intersects \(W_0\)
  in a vertex \(w\).
  Then we have \(a_j \le w \le b_i\) in \(P\) and \(w\) is
  comparable with \(s_i\) in \(P\).
  If \(w \le s_i\) in \(P\), then we have \(a_j \le w \le s_i \le s_j \le b_j\) in \(P\),
  which is impossible.
  Similarly, if \(w > s_i\) in \(P\), then we have
  \(a_i \le s_i < w \le b_i\) in \(P\), which is again impossible.
  As both cases lead to a contradiction, the proof follows.
  \end{subproof}
  
  \begin{nclaim}\label{clm:a_j b_i left of W_0}
    The vertices \(b_1\), \ldots, \(b_6\), \(a_2\), \ldots, \(a_7\)
    are left of \(W_0\).
  \end{nclaim}
\begin{subproof}

  Since \(t_1 < s_1 \le c_1\) in \(P\), the path
  \(x_0 W_0 t_1\) is a proper subpath of \(x_0 W_0 c_1\),
  which in turn is a subpath of \(x_0Tb_2\) by our choice of
  \(W_0\).
  As \(b_1 \prec_T b_2\), this implies that \(b_1\) is left of
  \(W_0\). 
  By Claim~\ref{clm:disjoint-path}, each path \(W(a_j, b_1)\) with 
  \(2 \le j \le 7\), is disjoint from \(W_0\).
  As the end \(b_1\) of \(W(a_j, b_1)\) is left of \(W_0\),
  the end \(a_j\) must be left of \(W_0\) as well.
  Hence the vertices \(a_2\), \ldots, \(a_7\) are left of \(W_0\).
  Similarly, for every \(i\) with \(1 \le i \le 6\),
  the path \(W(a_7, b_i)\) contains the vertex \(a_7\) left of
  \(W_0\) and is  
  disjoint from \(W_0\), so the vertices \(b_1\), \ldots, \(b_6\)
  must be left of \(W_0\). This proves the claim.
\end{subproof}

  From this point on we focus on the elements \(a_i\) and \(b_i\)
  with \(2 \le i \le 6\).
  
  \begin{nclaim}\label{clm:disjoint paths in S and T}
    The paths \(t_2 T b_2\), \ldots, \(t_6 T b_6\) are pairwise
    disjoint and the paths \(a_2 S s_2\), \ldots, \(a_6 S s_6\)
    are pairwise disjoint.
  \end{nclaim}
  \begin{subproof}
  Suppose to the contrary
  that there exist \(i\) and \(j\) with \(2 \le i < j \le 6\)
  such that the paths \(t_i T b_i\)
  and \(t_j T b_j\) intersect in a vertex \(w\).
  Then we have \(a_i \le s_i \le t_j \le w \le b_i\) in \(P\), which is
  a contradiction.
  A similar argument shows that the paths \(a_2 S s_2\), \ldots, \(a_6 S s_6\)
  are pairwise disjoint.
  The claim follows.  
  \end{subproof}
  
  \begin{nclaim}\label{clm:dp2}
    For \(2 \le i \le 6\) and \(2 \le j \le 6\),
    the paths \(t_i T b_i\) and \(a_j S s_j\) 
    are disjoint except possibly having a common end on \(W_0\).
  \end{nclaim}
  
\begin{subproof}
  Let \(i\) and \(j\) be such that \(2 \le i \le 6\) and \(2 \le j \le 6\).
  Suppose towards a contradiction that \(t_i T b_i\) and \(a_j S s_j\) intersect
  in a vertex \(w\) which does not lie on \(W_0\).
  It is impossible that \(i > j\) as that would imply \(t_i < w < s_j \le t_i\) in \(P\).
  Furthermore, it is impossible that \(i = j\) as then we would have
  \(a_i \le w \le b_i\) in \(P\).
  Let us hence assume that \(i < j\). Let \(N = N(a_j, b_i)\).
  Since the paths \(t_i T b_i\) and \(a_j S s_j\) intersect,
  \(N\) is a witnessing path from \(x_0\) to \(y_0\) and the black part
  of \(N\) is trivial.
  The ends of the path \(t_i N s_j\) lie on \(W_0\)
  and all internal vertices of \(t_i N s_j\) are left of \(W_0\).
  Furthermore, by Claim~\ref{clm:a_j b_i left of W_0},
  all vertices of the path \(t_j T b_j\) except
  \(t_j\) are left of \(W_0\).
  As \(t_j\) is an internal vertex of the path \(t_i W_0 s_j\),
  this implies that unless \(t_j T b_j\) intersects
  the path \(t_i N s_j\), \(b_j\) lies in the region bounded by
  the cycle \(C = t_i N s_j \cup t_i W_0 s_j\).
  But by Claims~\ref{clm:disjoint paths in S and T} and%
  ~\ref{clm:dp2} the path \(t_j T b_j\) does not intersect \(t_i N s_j\).
  Hence \(b_j\) indeed must lie in the region bounded by \(C\).
  As \(j \le 6\), we have \(s_j \le t_7 \le b_7\) in \(P\),
  so \(V(C) \subseteq \Down_P(s_j) \subseteq \Down_P(b_7)\),
  that is \(b_j\) is enclosed by \(b_7\).
  This contradicts Lemma~\ref{lem:enclosed} and
  completes the proof.
\end{subproof}

  The Claims~\ref{clm:a_j b_i left of W_0}--\ref{clm:dp2}
  show that the union of \(W_0\) and the paths \(t_i T b_i\) and
  \(a_i S s_i\) with \(2 \le i \le 6\) is a tree with no vertices right
  of \(W_0\). The vertices \(x_0\), \(b_2\), \(a_2\), \ldots,
  \(b_7\), \(a_7\), \(y_0\) are leaves of that tree.

    \begin{nclaim}\label{clm:246}
    Either
    \(W(a_4, b_2)\) intersects \(a_6 S s_6\), or
    \(W(a_6, b_4)\) intersects \(t_2 T b_2\). See Figure~\ref{fig:246}.
  \end{nclaim}
  \begin{figure}[b]
        \centering
      \includegraphics{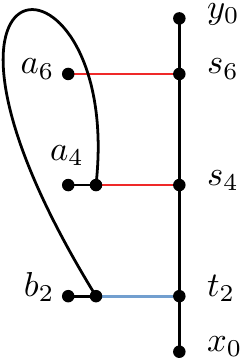}
      \qquad
      \includegraphics{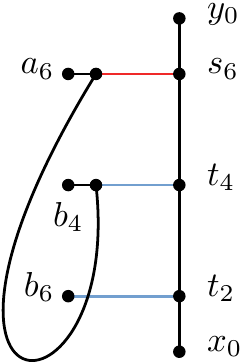}
      \caption{The two possible outcomes of Claim~\ref{clm:246}}
      \label{fig:246}
  \end{figure}
  \begin{subproof}
  Let \(N_1 = N(a_4, b_2)\) and \(N_2 = N(a_6, b_4)\).
   Each of the paths \(t_2 N_1 s_4\) and \(t_4 N_2 s_6\)
   has both its ends on \(W_0\) and all internal vertices left of \(W_0\).
   Since \(t_2 < t_4 < s_4 < s_6\), the paths \(t_2 N_1 s_4\) and \(t_4 N_2 s_6\)
   have to intersect in a vertex \(w\) which does not lie on \(W_0\).
   By Claims~\ref{clm:disjoint paths in S and T} and~\ref{clm:dp2},
   the vertex \(w\) has to lie on the black portion of \(N_1\) or \(N_2\). 
   Suppose that \(w\) lies on the black portion of \(N_1\) (and thus lies on
   \(W(a_4, b_2)\)). It is impossible that \(w\) lies on the blue or the
   black portion of \(N_2\) as that would imply
   \(a_4 \le w \le u(a_6, b_4) \le b_4\) in \(P\).
   Hence \(w\) must lie on \(a_6 S s_6\).
   A symmetric argument shows that if \(w\) lies on the black portion of \(N_2\),
   then \(w\) lies on the intersection of \(W(a_6, b_4)\) and \(t_2 T b_2\).
   The proof follows.
  \end{subproof}

  The two alternatives in the statement of Claim~\ref{clm:246}
  are symmetric, so without loss of generality we
  assume that \(W(a_4, b_2)\) intersects \(a_6 S s_6\).
  Let \(W = W(a_4, b_2)\), \(u = u(a_4, b_2)\) and
  \(v = v(a_4, b_2)\).
  Furthermore, let \(v'\) denote the greatest element of the
  intersection of \(W\) with \(a_6 S s_6\).
  This way the paths \(v S s_4\) and
  \(v' S s_6\) have their ends on the paths \(W\) and
  \(W_0\) but are otherwise disjoint \(W\) and \(W_0\).
  Furthermore, by Claim~\ref{clm:disjoint paths in S and T}, the paths
  \(v S s_4\) and \(v' S s_6\)
  are disjoint, so the union of
  the witnessing paths \(v W v' S s_6\) and
  \(v S s_j W_0 s_6\) forms a cycle, which we denote by
  \(C\). 
  See Figure~\ref{fig:fc}.

  \begin{figure}[hb]
    \centering
    \includegraphics{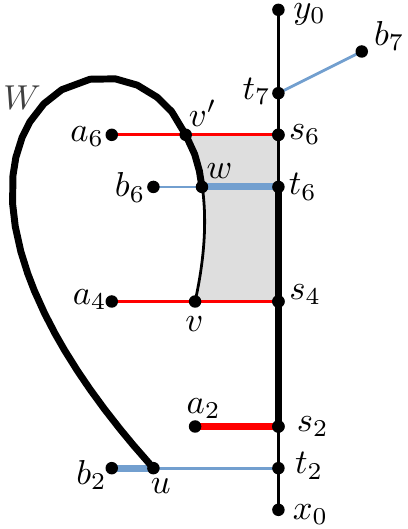}
    \caption{
    The shaded area is
    the region bounded by the cycle \(C\).} 
    \label{fig:fc}
  \end{figure}

  We have \(s_6 \le t_7 \le b_7\) in \(P\),
  so \(V(C) \subseteq \Down_P(s_6) \subseteq \Down_P(b_7)\).
  By Lemma~\ref{lem:enclosed}, \(b_6\) is not enclosed by
  \(b_7\), so \(b_6\) does not lie in the region bounded
  by \(C\) in the drawing.
  As \(s_4 < t_6 < s_6\) in \(P\)
  and \(t_6\) is the only vertex of \(t_6 T b_6\)
  which lies on \(W_0\),
  the path \(t_6 T b_6\) has to intersects the cycle \(C\) in a vertex
  \(w\) which does not lie on \(W_0\).
  By Claim~\ref{clm:dp2}, \(w\) does not lie on \(a_4 S s_4\)
  nor \(a_6 S s_6\).
  Hence \(w\) has to lie on \(v W v'\).
  This implies that
  \(a_2 \le b_2\) holds in \(P\)
  as witnessed by the path
  \(a_2 S s_2 W_0 t_6 T w W b_2\),
  a contradiction.
  Hence there does not exist an increasing path consisting of
  \(7\) pairs.
  This completes the proof of the lemma.
\end{proof}

For a standard example \(I\) in \(\Inc(A, B)\), we denote
by \(T(I)\) and \(S(I)\) the subtrees of \(T\) and
\(S\) respectively, defined as follows:
\[
T(I) = \bigcup_{(a, b) \in I} x_0 T b\quad\text{and}\quad
S(I) = \bigcup_{((a, b) \in I} a S y_0.
\]
A standard example \(I \subseteq \Inc(A, B)\) is
said to be \emph{separated} if the tress \(T(I)\) and
\(S(I)\) are disjoint.

\begin{lemma}\label{lem:disjoint-subtrees}
  For every positive integer \(n\), if \(I \subseteq \Inc(A, B)\)
  is a standard example with \(|I| \ge 36n+1\), then there exists
  a standard example \(I' \subseteq I\) with \(|I| \ge n + 1\)
  which is separated.
\end{lemma}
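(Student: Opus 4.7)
The plan is to view $D(I)$ as the union of two acyclic subdigraphs whose longest directed paths are both short, and extract a large common antichain. By Lemma~\ref{lem:iff} we may list the pairs of \(I\) as \((a_1, b_1), \ldots, (a_m, b_m)\) so that \(a_1 \prec_S \cdots \prec_S a_m\) and \(b_1 \prec_T \cdots \prec_T b_m\), where \(m = |I|\). Split the arcs of \(D = D(I)\) into \(D^+\), consisting of those arcs \((a_i, b_i) \to (a_j, b_j)\) with \(i < j\), and \(D^-\), consisting of those with \(i > j\). A directed path in \(D^+\) has strictly \(\prec_T\)-increasing \(b\)-coordinates, so it is an increasing path in the sense defined before Lemma~\ref{lem:incr-path}; likewise a directed path in \(D^-\) is a decreasing path. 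In either case, Lemma~\ref{lem:incr-path} bounds the number of vertices on such a path by \(6\).

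Now I would apply the standard longest-path labeling to each of \(D^+\) and \(D^-\). For \(p \in I\), let \(\phi(p)\) be the number of vertices on a longest \(D^+\)-path ending at \(p\), and define \(\psi(p)\) analogously for \(D^-\). Then \(\phi, \psi : I \to [6]\); moreover, if \((p, q)\) is an arc of \(D^+\) then \(\phi(p) < \phi(q)\), so every fibre \(\phi^{-1}(k)\) contains no arc of \(D^+\), and similarly every fibre \(\psi^{-1}(k)\) contains no arc of \(D^-\). The joint map \((\phi, \psi)\) therefore partitions \(I\) into at most \(36\) classes, each of which induces no arc of \(D\).

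Since \(|I| \ge 36n + 1\), pigeonhole supplies a class \(I' \subseteq I\) with \(|I'| \ge n + 1\) and no arcs of \(D\) among its members. Any subset of a standard example is a standard example, so it remains to check that the absence of arcs in \(D[I']\) is equivalent to separatedness. If \(T(I')\) and \(S(I')\) shared a vertex \(v\), then \(v \in x_0 T b\) for some \((a, b) \in I'\) and \(v \in a' S y_0\) for some \((a', b') \in I'\), producing an arc \((a', b') \to (a, b)\) of \(D\); since \((a, b) = (a', b')\) would force \(a \le v \le b\) contrary to \((a, b) \in \Inc(A, B)\), the two pairs are distinct, and this arc lies in the induced subdigraph on \(I'\), a contradiction. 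Hence \(T(I')\) and \(S(I')\) are disjoint and \(I'\) is separated.

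I do not foresee a real obstacle: the argument is a clean Mirsky-style two-coordinate colouring that leverages the \(\prec_S\)--\(\prec_T\) synchronisation from Lemma~\ref{lem:iff} to split \(D(I)\) into two DAGs, together with the path-length bound of Lemma~\ref{lem:incr-path}. The only step requiring a slight verification is the last reduction of ``no arc in \(D[I']\)'' to separatedness, which is immediate from the definitions.
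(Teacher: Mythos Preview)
Your proposal is correct and follows essentially the same argument as the paper: both use the longest-path labelling in the increasing and decreasing directions (the paper phrases it via $p(i)$ and $q(i)$ as longest increasing/decreasing paths \emph{starting} at $(a_i,b_i)$, you via $\phi,\psi$ as longest paths \emph{ending} there), invoke Lemma~\ref{lem:incr-path} to bound these by~$6$, and pigeonhole on the $36$ label pairs to find an arc-free set. Your explicit verification that ``no arcs in $D[I']$'' implies separatedness is a welcome addition that the paper leaves implicit.
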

\begin{proof}
  Let \((a_1, b_1)\), \ldots, \((a_{36n+1}, b_{36n+1})\)
  be any \(36n+1\) elements of \(I\) listed in the order such that
  \[
  b_1 \prec_T \cdots \prec_T b_{36n+1}
  \quad\textrm{ and }\quad
  a_1 \prec_S \cdots \prec_S a_{36n+1}.
  \]
  For each \(i \in [36n+1]\),
  let \(p(i)\) denote the length of
  a longest increasing path in \(D(I)\) which starts
  with \((a_i, b_i)\) and let \(q(i)\) denote the length of
  a longest decreasing path in \(D(I)\) which starts
  with \((a_i, b_i)\).
  In particular, for every arc \((a_i, b_i) \rightarrow (a_j, b_j)\) in \(D(I)\),
  if \(i < j\) then \(p(i) > p(j)\), and
  if \(i > j\) then \(q(i) > q(j)\).
  As \(D(I)\) does not have loops, this implies that
  for \(i, j \in [36n+1]\), if \(p(i) = p(j)\)
  and \(q(i) = q(j)\), then \((a_i, b_i) \rightarrow (a_j, b_j)\) is not
  an arc of \(D(I)\).
  
  By Lemma~\ref{lem:incr-path}, for each \(i \in [36n+1]\) we have
  \(p(i), q(i) \in \{1, \ldots, 6\}\).
  Hence, by the pigeonhole principle, there exists a subset
  \(X \subseteq [36n+1]\) with \(|X| \ge n+1 \) such that 
  the pair of the values \((\alpha(j), \beta(j))\)
  is the same for all \(j \in X\).
  For such \(X\), there do not exist \(i, j \in X\).
  such that \((a_i, b_i) \rightarrow (a_j, b_j)\) is an arc of \(D(I)\).
  Hence the standard example \(\{(a_i, b_i) : i \in X\}\)
  satisfies the lemma.
\end{proof}

\begin{lemma}\label{lem:domino}
  Let \(I = \{(a_1, b_1), \ldots, (a_n, b_n)\} \subseteq \Inc(A, B)\) be a separated standard example
  with
  \[
  b_1 \prec_T \cdots \prec_T b_n
  \quad\textrm{ and }\quad
  a_1 \prec_S \cdots \prec_S a_n.
  \]
  Let \(i, j \in \{2, \ldots, n - 1\}\) be distinct.
  \begin{enumerate}
      \item If \(b_{j-1}\) is left of \(N(a_i, b_j)\), then all
  \(b_{1}\), \ldots, \(b_{j-1}\) are left of \(N(a_i, b_j)\).
      \item\label{itm:domino-1} If \(b_{j+1}\) is right of \(N(a_i, b_j)\), then all
  \(b_{j+1}\), \ldots, \(b_{n}\) are right of \(N(a_i, b_j)\).
      \item If \(a_{i+1}\) is left of \(N(a_i, b_j)\), then all
  \(a_{i+1}\), \ldots, \(a_{n}\) are left of \(N(a_i, b_j)\).
      \item If \(a_{i-1}\) is right of \(N(a_i, b_j)\), then all
  \(a_{1}\), \ldots, \(a_{i-1}\) are right of \(N(a_i, b_j)\).
  \end{enumerate}
\end{lemma}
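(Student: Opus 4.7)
The four items are related by symmetries of the setup: (2) follows from (1) by reversing the order $\prec_T$ on the $b_i$'s (which swaps ``left'' and ``right''), and (3), (4) follow from (1), (2) by exchanging the roles of $T$ and $S$. It therefore suffices to prove item (1). I plan to argue by contradiction: assume $b_{j-1}$ is left of $N := N(a_i, b_j)$ and that some $b_k$ with $k < j-1$ is not left of $N$.

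First I would rule out $b_k$ lying on $N$ itself. Since $b_k \in T(I)$ and $I$ is separated, $b_k$ is not on the red portion $v S y_0 \subseteq S(I)$. If $b_k$ were on the blue portion $x_0 T u \subseteq x_0 T b_j$, then $b_k$ would be an ancestor of $b_j$ in $T$, forcing $b_k = b_j$ since both are leaves of $T$, which is absurd. If $b_k$ were on the black portion $u W v$, the poset inequalities $a_i \le v \le b_k \le u \le b_j$ force $b_k \in \{u, b_j\}$, again absurd. Hence $b_k$ is right of $N$. Next, the tree path $W := b_k T b_{j-1}$ is contained in $T(I)$ and connects a vertex right of $N$ to one left of $N$, so it must cross $N$; by separation any such crossing point $z$ lies on the blue or black portion.

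The core of the proof is then to derive a contradiction from the existence of $z$. If $z$ lies on the black portion $uWv$, then combining $v \le z \le u$ in $P$ with the tree inequality $z \le b_k$ (or $z \le b_{j-1}$, depending on which side of $\mathrm{LCA}(b_k, b_{j-1})$ the vertex $z$ sits on) yields $a_i \le b_k$ or $a_i \le b_{j-1}$, which is immediately impossible when $i \in \{k, j-1\}$. In the remaining subcases, and also when $z$ lies on the blue portion, the plan is to use the planar clockwise order at the three key common ancestors $\mathrm{LCA}(b_k, b_j)$, $\mathrm{LCA}(b_{j-1}, b_j)$, $\mathrm{LCA}(b_k, b_{j-1})$ together with the position of $u$, to build a cycle $C$ in the cover graph satisfying $V(C) \subseteq \Down_P(b_j)$ and enclosing some leaf $b_m$ with $m \ne j$. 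This would contradict Lemma~\ref{lem:enclosed}. The main obstacle will be navigating this case analysis cleanly—specifically, verifying in each planar configuration of the LCAs and of $u$ that the constructed cycle really does enclose a leaf of $T$ while keeping all its vertices in $\Down_P(b_j)$.
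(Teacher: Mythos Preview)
Your symmetry reduction and the argument ruling out \(b_k \in V(N)\) are fine; the paper likewise proves only one of the four items. The divergence is in which path you track across \(N\). You use the tree path \(b_k T b_{j-1}\), whereas the paper follows the root--to--leaf path \(x_0 T b_k\), shows it must meet the black portion at some vertex \(w_k\), and then applies Lemma~\ref{lem:three-paths} to the witnessing subpath \(u\,N\,w_k\) (which meets \(x_0 T b_j\) at \(u\) and \(x_0 T b_k\) at \(w_k\)) to force an intersection with \(x_0 T b_{j\pm 1}\). That invocation of Lemma~\ref{lem:three-paths} is precisely what pins down the neighbouring leaf in the argument, and it is absent from your outline.

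Your intended endgame---build a cycle \(C\) with \(V(C)\subseteq\Down_P(b_j)\) enclosing some \(b_m\) and cite Lemma~\ref{lem:enclosed}---is only an \emph{intermediate} step in the paper's proof, not the final blow. After one use of Lemma~\ref{lem:enclosed}, the paper still introduces a modified \(x_0\)--\(y_0\) path \(N'=x_0 T w_k N y_0\), locates the neighbouring leaf relative to \(N'\), and finishes with the cyclic chain \(w_{j+1}\le z' < w_k \le w_{j+1}\) in \(P\). Concretely, in your black-portion subcase you only obtain \(a_i\le b_k\) or \(a_i\le b_{j-1}\), which contradicts nothing unless \(i\in\{k,j-1\}\)---and \(i\) is an arbitrary index distinct from \(j\). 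In your blue-portion subcase, the crossing of \(b_kTb_{j-1}\) with \(x_0Tu\) is a pure tree incidence (the crossing point is just a common ancestor) and carries no new \(P\)-comparability, so ``navigating the LCAs'' will not by itself manufacture an enclosing cycle in \(\Down_P(b_j)\). As written the plan has a real gap: switch from \(b_kTb_{j-1}\) to \(x_0Tb_k\), use Lemma~\ref{lem:three-paths} on the black portion to catch the neighbouring leaf, and expect the contradiction to come from a chain of poset inequalities rather than a single enclosure.
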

\begin{proof}
  The four statements are symmetric and thus we only prove~\eqref{itm:domino-1}.
  Let \(N = N(a_i, b_j)\), \(u = u(a_i, b_j)\), and \(v = v(a_i, b_j)\), and assume
  that \(b_{j+1}\) is right of \(N\).
  We need to show that for every \(k \in \{j+1, \ldots, n\}\),
  the vertex \(b_k\) is right of \(N\).
  We already assumed that it holds for \(k = j + 1\), so
  let us assume that \(k \ge j + 2\)
  and suppose towards a contradiction that
  \(b_k\) is not right of \(N\).
  Then \(b_k\) has to be left of \(N\).
  Since the standard example is separated,
  \(x_0 T b_k\) is disjoint from \(v S y_0\).
  As \(b_j \prec_T b_k\), this implies that the path
  \(x_0 T b_k\) has to intersect \(u N v\).
  Let \(w_k\) denote the least vertex of the intersection of \(x_0 T b_k\)
  with \(u N v\) and let \(N' = x_0 T w_k N y_0\).
  \begin{nclaim}\label{clm:wkTbk}
    All vertices of \(w_k T b_k\) except \(w_k\) are left of \(N'\).
  \end{nclaim}
  \begin{subproof}
    The path \(N'\) is the union of paths \(x_0 T w_k\), \(w_k N v\) and \(v S y_0\).
    The path \(x_0 T w_k\) intersects \(w_k T b_k\) only in \(w_k\) because \(w_k\) lies on
    \(x_0 T b_k\).
    The path \(w_k N v\) intersects \(w_k T b_k\) only in \(w_k\) because
    \(V(w_k N v) \subseteq \Down_P(w_k)\) and \(V(w_k T b_k) \subseteq \Up_P(w_k)\).
    Finally, the path \(v S y_0\) is disjoint from \(w_k T b_k\) because the standard example is separated.
    Hence \(w_k\) is the only vertex of \(w_k T b_k\) lying on \(N'\).
    As \(b_k\) is left of \(N\), it is also left of \(N'\), so the claim must hold.
  \end{subproof}
  The witnessing path \(u N w_k\) intersects both \(x_0 T b_j\) and \(x_0 T b_k\).
  By Lemma~\ref{lem:three-paths}, \(u N w_k\) intersects \(x_0 T b_{j+1}\), say
  in a vertex \(w_{j+1}\), and we have
  \[
    w_k \le w_{j+1} \le u \textrm{ in }P.
\]
  If \(b_{j+1}\) were not right of \(N'\), then the fact that \(b_{j+1}\) is right of \(N\)
  would imply that \(b_{j+1}\) lies in the region bounded by the cycle \(C = u T w_k N u\).
  As \(V(C) \subseteq \Down_P(u) \subseteq \Down_P(b_j)\), the vertex \(b_{j+1}\) would be
  enclosed by \(b_j\), which would contradict Lemma~\ref{lem:enclosed}.
  Let us hence assume that \(b_{j+1}\) is right of \(N'\) and let
  \(z\) denote the least vertex on the path \(w_{j+1} T b_{j+1}\) which is right of
  \(N'\).
  As \(w_{j+1}\) is not right of \(N'\), the parent \(z'\) of \(z\) in \(T\)
  satisfies
  \[
    w_{j+1} \le z' < z\textrm{ in }P
  \]
  and \(z'\) lies on \(N'\).
  It is impossible that \(z'\) lies on \(x_0 T w_k\) because that would imply
  \(b_k \prec_T b_{j+1}\) (by Claim~\ref{clm:wkTbk} this is impossible even if \(z' = w_k\)).
  Since the standard example is separated, it is also impossible that \(z'\) lies on
  \(v S y_0\). Hence \(z'\) has to be an internal vertex of the path \(w_k N v\).
  Hence, we have
  \[
    v < z' < w_k\textrm{ in }P.
  \]
  See Figure~\ref{fig:domino}.
  \begin{figure}
      \centering
      \includegraphics{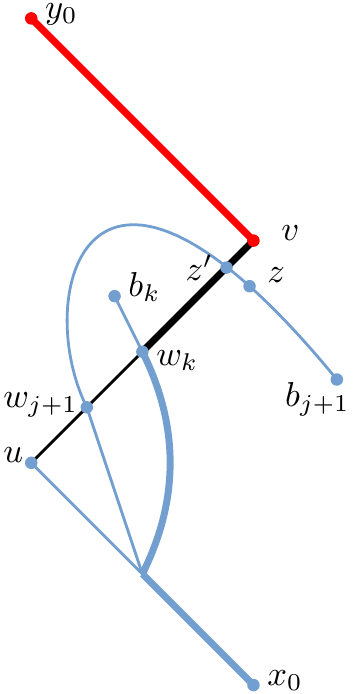}
      \caption{Illustration of Lemma~\ref{lem:domino}. The bolded path is \(N'\).}
      \label{fig:domino}
  \end{figure}
  Summarizing, we have \(w_{j+1} \le z' < w_k \le w_{j+1}\) in \(P\).
  This contradiction completes the proof of the lemma.
\end{proof}

\begin{lemma}\label{lem:domino+1}
  Let \(\{(a_1, b_1), \ldots, (a_{n}, b_{n})\} \subseteq \Inc(A, B)\) be a separated standard example
  with
  \[
  b_1 \prec_T \cdots \prec_T b_n
  \quad\textrm{ and }\quad
  a_1 \prec_S \cdots \prec_S a_n,
  \]
  and let \(i\) and \(j\) satisfy \(1 \le i < j \le n\).
  Then
  \(b_i\) and \(a_j\) are left of \(N(a_i, b_j)\).
\end{lemma}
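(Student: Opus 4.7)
The plan is to locate, for each of \(b_i\) and \(a_j\), a unique ``contact point'' on \(N(a_i,b_j)\), and then read off the left/right classification directly from the clockwise orders defining \(\prec_T\) and \(\prec_S\). Throughout, write \(N=N(a_i,b_j)\), \(u=u(a_i,b_j)\), \(v=v(a_i,b_j)\), and \(W=W(a_i,b_j)\).

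I would first verify that \(b_i\) and \(a_j\) do not lie on \(N\) at all. The vertex \(b_i\) cannot lie on \(vSy_0\subseteq S(I)\) by separation (since \(b_i\in T(I)\)); it cannot lie on the black portion \(uWv\), since any such vertex \(w\) would satisfy both \(a_i\le w\le b_j\) and \(w\le b_i\), giving \(a_i\le b_i\); and it cannot lie on \(x_0Tu\), since \(b_i\) would then be an ancestor of \(b_j\) in \(T\), impossible as \(b_i\) is a leaf of \(T\) distinct from \(b_j\). An analogous argument rules out \(a_j\in V(N)\).

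For the first conclusion, let \(w\) be the last vertex along \(x_0Tb_i\) (ordered from \(x_0\) to \(b_i\)) that lies on \(N\); such \(w\) exists since \(x_0\in V(N)\), and \(w\neq b_i\) because \(b_i\notin V(N)\). Repeating the case analysis above, \(w\) cannot lie on \(uWv\) (else \(a_i\le w\le b_i\)) nor on \(vSy_0\) (by separation), so \(w\in V(x_0Tu)\) and hence \(w\) is a common ancestor of both \(b_i\) and \(b_j\) in \(T\). Choosing \(w\) as the \emph{last} such vertex along \(x_0Tb_i\) forces \(w\) to be the lowest common ancestor of \(b_i\) and \(b_j\) in \(T\). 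At \(w\), the path \(wNx_0\) leaves in the direction of \(wTx_0\), while \(wNy_0\) leaves in the direction of \(wTu\), which coincides with the direction of \(wTb_j\) since \(u\) lies on \(wTb_j\). Now \(b_i\prec_T b_j\) says exactly that \(wTx_0\), \(wTb_i\), \(wTb_j\) leave \(w\) in clockwise order. Taking \(M=wTb_i\) (internally disjoint from \(N\) by the maximality of \(w\) and ending at \(b_i\notin V(N)\)) is then precisely the witness required by the definition of ``\(b_i\) is left of \(N\)''.

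The conclusion for \(a_j\) follows by the symmetric argument. Let \(w'\) be the \emph{first} vertex along \(a_jSy_0\) (ordered from \(a_j\) to \(y_0\)) that lies on \(N\); a mirror case analysis (rejecting \(x_0Tu\) by separation and \(uWv\) via \(a_j\le w'\le b_j\)) shows \(w'\in V(vSy_0)\). One then checks that \(w'\) is the lowest common ancestor of \(a_i\) and \(a_j\) in \(S\), equivalently of \(v\) and \(a_j\), the key observation being that \(v\) cannot be a proper ancestor of this common ancestor, since combining \(a_j\le v\) with \(v\le b_j\) would contradict \((a_j,b_j)\in\Inc(P)\). Consequently at \(w'\) the direction toward \(a_i\) coincides with the direction toward \(v\), and \(a_i\prec_S a_j\) translates into the clockwise cyclic order \(w'Sy_0\), \(w'Sv\), \(w'Sa_j\) at \(w'\); rotating this cyclic order is exactly the condition ``\(a_j\) is left of \(N\)'' with the witness \(M=w'Sa_j\). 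The only real hurdle here is the clean case analysis pinning down on which portion of \(N\) the contact vertex lies; once that is settled, the conclusion is a direct translation of the clockwise-order definitions of \(\prec_T\) and \(\prec_S\).
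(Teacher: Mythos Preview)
Your proof is correct and follows essentially the same approach as the paper: both arguments show that \(u(a_i,b_j)\) is not an ancestor of \(b_i\) in \(T\) (respectively, \(v(a_i,b_j)\) is not an ancestor of \(a_j\) in \(S\)) via the incomparability \(a_i\not\le b_i\) (resp.\ \(a_j\not\le b_j\)), and then read off ``left of \(N\)'' from the clockwise definition of \(\prec_T\) (resp.\ \(\prec_S\)). Your version is more explicit---you identify the contact point as the lowest common ancestor and spell out the cyclic-order verification---and you invoke separatedness where the paper simply uses \(a_i\not\le b_i\) for the red portion, but these are cosmetic differences.
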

\begin{proof}
  Since \(a_i\) is incomparable with \(b_i\) in \(P\),
  \(x_0 T b_i\) is disjoint from the black and red portions
  of \(N(a_i, b_j)\).
  In particular, \(u(a_i, b_j)\) is not an ancestor
  of \(b_i\).
  As \(b_i \prec_T b_j\), this implies that
  \(b_i \prec_T u(a_i, b_j)\) and thus
  \(b_i\) is left of \(N(a_i, b_j)\).
  A symmetric argument shows that \(a_j\) is left of
   \(N(a_i, b_j)\) as well.
\end{proof}

\begin{lemma}\label{lem:separating-N-path}
  Let \(\{(a_1, b_1), \ldots, (a_{n}, b_{n})\} \subseteq \Inc(A, B)\) be a separated standard example
  with
  \[
  b_1 \prec_T \cdots \prec_T b_n
  \quad\textrm{ and }\quad
  a_1 \prec_S \cdots \prec_S a_n,
  \]
  and let \(i\), \(j\) and \(k\) satisfy \(1 \le i < j < k \le n\).
  Then
  \begin{enumerate}
      \item\label{itm:separating-N-path-1} \(a_i\) is right of \(N(a_j, b_k)\), or
      \item\label{itm:separating-N-path-2} \(b_k\) is right of \(N(a_i, b_j)\).
  \end{enumerate}
\end{lemma}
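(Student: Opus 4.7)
The plan is to argue by contradiction: suppose neither (1) nor (2) holds, so \(a_i\) is left of or on \(N_2 := N(a_j, b_k)\) and \(b_k\) is left of or on \(N_1 := N(a_i, b_j)\). Abbreviate \(u_\ell, v_\ell, W_\ell\) for the corresponding constituents of \(N_\ell\), \(\ell = 1, 2\). By Lemma~\ref{lem:ijk} applied to \(i < j < k\), the paths \(N_1\) and \(N_2\) share a vertex \(w\) lying on a red or black portion of \(N_1\) and on a blue or black portion of \(N_2\). The separation hypothesis makes the red of \(N_1\) (inside \(S(I)\)) disjoint from the blue of \(N_2\) (inside \(T(I)\)), and if \(w\) lay on the black portion of both \(N_1\) and \(N_2\), then \(a_j \le v_2 \le w \le u_1 \le b_j\) in \(P\) would violate incomparability of \((a_j, b_j)\). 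Hence \(w\) falls in one of two cases that are interchanged by the anti-isomorphism to the dual of \(P\) (swapping \(A \leftrightarrow B\), \(S \leftrightarrow T\), red \(\leftrightarrow\) blue), so we may focus on the case where \(w\) lies on the black of \(N_1\) and the blue of \(N_2\).

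In this case, \(w\) is an ancestor of \(b_k\) in \(T\), and \(v_1 \le w \le u_1\) in \(P\), in particular \(w \le b_j\). Set \(L := \mathrm{lca}_T(u_1, w)\) and form the closed walk \(C := L T u_1 W_1 w T L\) in the cover graph; since \(u_1\) and \(w\) lie in different subtrees of \(L\) in \(T\) and \(W_1\) was chosen with shortest middle portion, after some minor housekeeping \(C\) is a cycle. Every vertex of \(C\) lies in \(\Down_P(b_j)\): the two \(T\)-arcs consist of ancestors of \(u_1\) or of \(w\), both of which are \(\le b_j\), and the chain \(u_1 W_1 w\) sits inside \([v_1, u_1]_P \subseteq \Down_P(b_j)\). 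By Lemma~\ref{lem:enclosed}, \(b_k\) cannot lie in the region bounded by \(C\), so it will suffice to show that the two standing assumptions force \(b_k\) into that region.

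To trap \(b_k\) inside \(C\), I would use the clockwise order \(b_j \prec_T b_k\) at \(L\) (which fixes the interior side of \(C\)) together with the observation that \(w T b_k\) exits \(w\) into the subtree below \(w\). The assumption that \(b_k\) is left of \(N_1\) prevents \(w T b_k\) from escaping to the exterior via the red of \(N_1\) (already forbidden by separation) or across \(u_1 W_1 w\) without creating a new intersection, while the symmetric assumption that \(a_i\) is left of \(N_2\) blocks the escape route along \(v_1 S y_0\). Combining these planar constraints funnels \(b_k\) into the interior of \(C\), contradicting Lemma~\ref{lem:enclosed}.

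The hard part will be this last trapping step, which is a planar argument in the spirit of the closing portions of the proofs of Lemma~\ref{lem:domino} and Lemma~\ref{lem:incr-path}, where clockwise orderings in \(T\) and \(S\) combined with the separation hypothesis force an element into a region bounded by \(T\)-descendants of a single vertex. Degenerate configurations such as \(L = u_1\) (forcing \(w = u_1\)), \(w = b_k\), or interior vertices of \(W_1\) lying on \(w T L\), reduce to simpler sub-cases handled along the same lines.
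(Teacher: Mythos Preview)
Your opening moves are sound and essentially match the paper: assume neither (1) nor (2), invoke Lemma~\ref{lem:ijk}, and use separation plus incomparability of \((a_j,b_j)\) to rule out the red--blue and black--black configurations for the intersection vertex \(w\). The paper in fact lands directly in your Case~A (it shows from ``\(b_k\) not right of \(N_1\)'' that \(u_2\) is left of \(N_1\), whence \(x_0 T u_2\) must hit the black portion of \(N_1\)), so your duality reduction, while correct, is not needed.

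The gap is in the trapping step. The cycle you build, \(C = L\,T\,u_1\,W_1\,w\,T\,L\), has boundary contained in \(T(I)\) and the black of \(N_1\); neither the red of \(N_1\) nor \(v_1 S y_0\) is part of \(\partial C\), so your stated ``escape routes'' are not the ones that need blocking. More seriously, the hypothesis you have not yet used---``\(a_i\) is not right of \(N_2\)''---concerns the position of \(a_i\) relative to \(N_2\), which has no evident bearing on whether \(b_k\) sits inside your \(C\); the cycle \(C\) is built entirely from \(N_1\)- and \(T\)-data. If your argument went through without that hypothesis, it would prove that \(b_k\) is enclosed by \(b_j\) whenever \(b_k\) is not right of \(N_1\), which is too strong. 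So the sketch, as written, cannot close.

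The paper sidesteps enclosure entirely. Taking \(z\) to be the least vertex of \(x_0 T u_2\) on the black of \(N_1\), it forms the \(x_0\)--\(y_0\) path \(N' = x_0\,T\,z\,N_1\,y_0\), observes that \(u_2\) is left of \(N'\), and then uses the other hypothesis (that \(v_1\) is not right of \(N_2\), which follows from ``\(a_i\) not right of \(N_2\)'' via the path \(a_i S v_1\)) to force the subpath \(z\,N_1\,v_1\) to meet \(N_2\) a second time, at a point \(w'\) on the black or red of \(N_2\). This yields the chain \(a_j \le v_2 \le w' \le z \le u_1 \le b_j\), a direct contradiction. If you want to repair your approach, the cleanest fix is to abandon the cycle \(C\) and instead track where \(v_1\) lies relative to \(N_2\); that is the place where the second hypothesis enters.
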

\begin{proof}
Suppose to the contrary that the lemma does not hold, that is
neither~\eqref{itm:separating-N-path-1}, nor~\eqref{itm:separating-N-path-2}
holds.
Then, as \(a_i\) does not lie on \(N(a_j, b_k)\),
\(a_i\) must be left of \(N(a_j, b_k)\)
The witnessing path \(a_i S v(a_i, b_j)\) is disjoint from \(N(a_j, b_k)\)
because the standard example is separated and \(a_j\) is incomparable with \(b_j\)
in \(P\).
Hence \(v(a_i, b_j)\) is left of \(N(a_j, b_k)\) as well.
Analogously, the vertices \(b_k\) and \(u(a_j, b_k)\) must be left
of \(N(a_i, b_j)\).

We have \(b_j \prec_T b_k\) and \(u(a_j, b_k)\) is left of \(N(a_i, b_j)\).
As the standard example is separated, the above implies that either
(\Rom{1}) \(u(a_i, b_j)\) is an ancestor of \(u(a_j, b_k)\), or
(\Rom{2}) \(u(a_i, b_j)\) is not an ancestor of \(u(a_j, b_k)\) and the path
\(x_0 T u(a_j, b_k)\) intersects \(N(a_i, b_j)\) in an internal vertex of the black portion.
Either way, the path \(x_0 T u(a_j, b_k)\) intersects the black portion of \(N(a_i, b_j)\)
(if (\Rom{1}) holds then \(u(a_i, b_j)\) is a vertex in the intersection).
Let \(z\) denote the least vertex of \(x_0 T u(a_j, b_k)\) which lies on the black portion of
\(N(a_i, b_j)\). Let \(N' = x_0 T z N(a_i, b_j) y_0\).
The only vertex of \(z T u(a_j, b_k)\)
which lies on \(N'\) is \(z\).
Furthermore, no vertex of \(N'\) is left of \(N(a_i, b_j)\), so the fact that
\(u(a_j, b_k)\) is left of \(N(a_i, b_j)\) implies that \(u(a_j, b_k)\) is also
left of \(N'\). Hence, the paths \(z N' x_0\), \(z T u(a_j, b_k)\), and \(z N(a_i, b_j) v(a_i, b_j)\)
leave the vertex \(z\) in that clockwise order.
Since  \(v(a_i, b_j)\) is not right of \(N(a_j, b_k)\), this implies that the
path \(z N(a_i, b_j) v(a_i, b_j)\) intersects \(N(a_j, b_k)\) in a vertex \(w\) distinct from
\(z\). It is impossible that \(w\) lies on the blue portion of \(N(a_j, b_k)\).
Hence \(w\) lies on the black or red portion of \(N(a_j, b_k)\).
But this implies that
\(a_j \le v(a_j, b_k) \le w \le z \le u(a_i, b_j) \le b_j\) in \(P\), a contradiction.
The lemma follows.
\end{proof}

We proceed to the proof of Lemma~\ref{lem:se-vs-kelly}.

\begin{proof}[Proof of Lemma~\ref{lem:se-vs-kelly}]
Let \(n \ge 3\) and suppose that
\(\Inc(A, B)\) contains a standard example of size \(360n+1\).
We need to show that
\(P - \{x_0, y_0\}\) contains a subposet isomorphic to the Kelly poset \(\Kelly{n}\).
By Lemma~\ref{lem:disjoint-subtrees}, \(\Inc(A, B)\) contains a standard example
of size \(10n+1\) which is separated. Let us fix such a standard example
\(\{(a_1, b_1), \ldots, (a_{10n+1}, b_{10n+1})\}\).
We assume \(a_1 \prec_S \cdots \prec_S a_{10n+1}\) and
\(b_1 \prec_T \cdots \prec_T b_{10n+1}\).
By Lemma~\ref{lem:separating-N-path}, \(a_{5n}\) is right of \(N(a_{5n+1}, b_{5n+2})\)
or \(b_{5n+2}\) is right of \(N(a_{5n}, b_{5n+1})\)

If \(a_{5n}\) is right of \(N(a_{5n+1}, b_{5n+2})\), then by Lemma~\ref{lem:domino}, \(b_{5n+1}\) is left of \(N(a_{5n+1}, b_{5n+2})\).
Hence, by Lemma~\ref{lem:domino+1}, the vertices \(a_1\), \ldots, \(a_{5n}\) are right of
\(N(a_{5n+1}, b_{5n+2})\) and the vertices \(b_1\), \ldots, \(b_{5n}\) are left of \(N(a_{5n+1}, b_{5n+2})\). See Figure~\ref{fig:separating-N-path-b}.

On the other hand, if \(b_{5n+2}\) is right of \(N(a_{5n}, b_{5n+1})\), then by Lemma~\ref{lem:domino},
\(a_{5n+1}\) is left of \(N(a_{5n}, b_{5n+1})\).
Hence, by Lemma~\ref{lem:domino+1}, the vertices \(a_{5n+2}\), \ldots, \(a_{10n+1}\) are left of 
\(N(a_{5n}, b_{5n+1})\) and the vertices \(b_{5n+2}\), \ldots, \(b_{10n+1}\) are right of \(N(a_{5n}, b_{5n+1})\). See Figure~\ref{fig:separating-N-path-a}.
In this case, we may flip the drawing and swap each \(a_{i}\) and \(b_i\) with \(a_{10n+2-i}\) and
\(b_{10n+2-i}\) respectively, so that \(a_{1}\), \ldots, \(a_{5n}\) are right of
\(N(a_{5n+2}, b_{5n+1})\) and \(b_1\), \ldots, \(b_{5n}\) are right of \(N(a_{5n+2}, b_{5n+1})\).

  \begin{figure}
     \centering
     \begin{subfigure}[b]{0.45\textwidth}
         \centering
         \includegraphics[width=\textwidth]{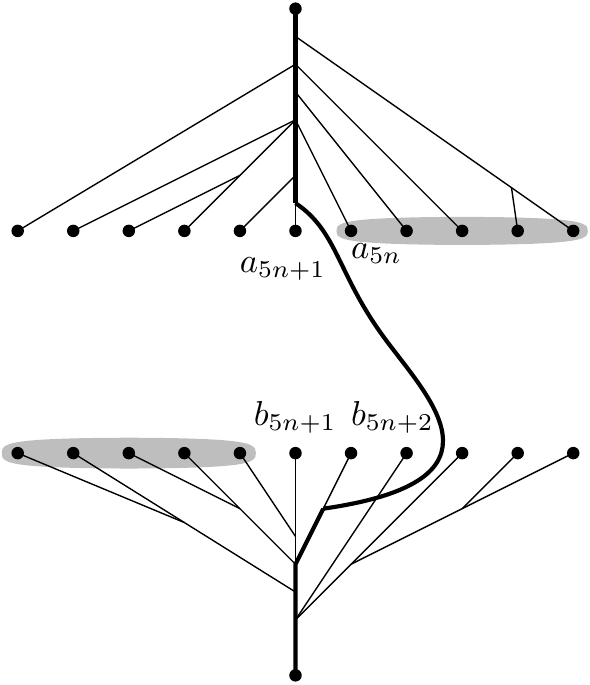}
         \caption{\(N(a_{5n+1}, b_{5n+2})\) separates
         \(a_1\), \ldots, \(a_{5n}\) from \(b_1\), \ldots, \(b_{5n}\).}
         \label{fig:separating-N-path-b}
     \end{subfigure}
     \hfill
     \begin{subfigure}[b]{0.45\textwidth}
         \centering
         \includegraphics[width=\textwidth]{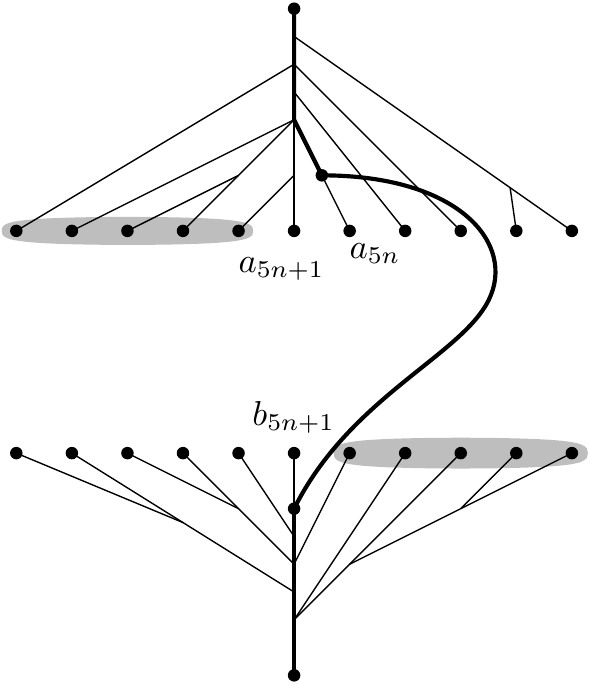}
         \caption{\(N(a_{5n}, b_{5n+1})\) separates
         \(a_{5n+2}\), \ldots, \(a_{10n+1}\) from \(b_{5n+2}\), \ldots, \(b_{10n+1}\).}
         \label{fig:separating-N-path-a}
     \end{subfigure}
      \caption{Lemmas~\ref{lem:domino},~\ref{lem:domino+1}, and~\ref{lem:separating-N-path} imply existence of one of these configurations.
      }
      \label{fig:separating-N-path}
  \end{figure}

Therefore, after possibly flipping the drawing and reversing the pairs in the standard example,
we assume that there exist \(a^* \in \{a_{5n+1}, b_{5n+2}\}\) and \(b^* \in \{b_{5n+1}, b_{5n+2}\}\)
with \(a^* \le b^*\) in \(P\) such that
\[
a_1, \ldots, a_{5n} \textrm{ are right of }N(a^*, b^*)
\quad\textrm{and}\quad
b_1, \ldots, b_{5n}\textrm{ are left of }N(a^*, b^*).
\]
Let us fix such \(a^*\) and \(b^*\), let \(N^* = N(a^*, b^*)\),
\(u^* = u(a^*, b^*)\) and \(v^* = v(a^*, b^*)\).

For each \(i \in [5n-1]\), let \(W_i'\) be a witnessing path from \(a_i\)
to \(b_{i+1}\) which minimises the number of edges outside \(N^*\).
This way, if \(W_i'\) has a nonempty intersection with any of the
witnessing paths \(x_0 N^* u^*\), \(v^* N^* u^*\) or \(v^* N^* y_0\),
then that intersection is a path.
It turns out that the intersection of \(W_i'\) with \(v^* N^* y_0\) is
always empty.

\begin{nclaim}\label{clm:W_i'-disjoint}
  For every \(i \in [5n-1]\), the witnessing path \(W_i'\) is disjoint from
  \(v^* N^* y_0\).
\end{nclaim}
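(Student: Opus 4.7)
The proof will proceed by contradiction. Suppose that $W_i' \cap v^*Sy_0 \neq \emptyset$ for some $i \in [5n-1]$. Since $W_i'$ was chosen to minimise the number of edges outside $N^*$, the intersection $W_i' \cap v^*Sy_0$ is a single (nonempty) subpath. Because $W_i'$ is a witnessing path (hence monotone increasing in $P$) and $v^*Sy_0$ is a subpath of the witnessing path $a^*Sy_0$ (hence monotone increasing from $v^*$ toward $y_0$), the traversal of the intersection on $W_i'$ agrees with the direction from $v^*$ toward $y_0$. Thus the intersection is a subpath $pSq$ of $v^*Sy_0$ with $v^* \le p \le q$ in $P$, and moreover $a_i \le p$, $q \le b_{i+1}$, and $a^* \le q$ hold in $P$. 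Also $q \neq y_0$ since $y_0$ has degree $1$ in $G$ and $W_i'$ ends at $b_{i+1} \neq y_0$.

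The strategy is to exhibit a witnessing path $W_i''$ from $a_i$ to $b_{i+1}$ with strictly fewer edges outside $N^*$, contradicting the choice of $W_i'$. My plan is to analyse the subpath $W_i'[q,b_{i+1}]$: it starts at $q$ on the red portion of $N^*$ and must reach $b_{i+1}$, which lies left of $N^*$. Since the standard example is separated, $v^*Sy_0 \subseteq S(I)$ is disjoint from $x_0Tb_{i+1} \subseteq T(I)$, so $W_i'[q,b_{i+1}]$ leaves $v^*Sy_0$ at $q$. Together with the fact that $a_i$ is right of $N^*$, this forces the detour $W_i'[a_i,p]\cdot pSq$ up toward $y_0$ to contribute edges that cross $N^*$ near the red portion. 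I would then reroute this detour through the blue (and possibly black) portion of $N^*$: the planar embedding, combined with the positions of $a_i$ and $b_{i+1}$ on opposite sides of $N^*$, should allow a replacement that uses more blue edges of $N^*$ and thus has strictly fewer non-$N^*$ edges.

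The main obstacle is that $N^*$ is not monotone in $P$ (it ascends along the blue portion, descends along the black portion, and ascends again along the red portion), so any replacement witnessing path must navigate the direction changes while remaining monotone. As a fallback, I would instead form a cycle $C$ in $G$ using the paths $qSy_0$, $v^*Sq$, the black portion $v^*W^*u^*$, parts of the blue portion, and the segment $W_i'[q,b_{i+1}]$, and argue that $V(C)\subseteq\Down_P(b^*)$ or $V(C)\subseteq\Up_P(a^*)$; combined with the planar positions of the remaining $a_j,b_j$ relative to $N^*$ established by Lemma~\ref{lem:domino}, $C$ would enclose some $b_j$ or $a_j$ from the standard example with index different from that of $b^*$ or $a^*$, contradicting Lemma~\ref{lem:enclosed}. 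The crux either way is leveraging the planarity of $G$ together with the non-monotone shape of $N^*$ and the separatedness of the standard example.
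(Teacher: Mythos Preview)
Your proposal is not a proof but a plan, and neither branch of the plan is close to working. The rerouting idea runs straight into the obstacle you yourself name and do not resolve: because $N^*$ is not monotone in $P$, there is no reason a witnessing $a_i$--$b_{i+1}$ path with more edges on $N^*$ should exist, so the minimality of $W_i'$ gives you nothing here. The fallback cycle argument is too vague---you never specify a cycle whose vertex set actually sits in a single $\Down_P(b_j)$ or $\Up_P(a_j)$, and the pieces you list (parts of $W_i'$, of $v^*Sy_0$, of the black and blue portions) do not obviously combine into one.

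You are missing a one-line application of Lemma~\ref{lem:three-paths}. The path $v^* N^* y_0$ is exactly $v^* S y_0$, a subpath of $a^* S y_0$. The witnessing path $W_i'$ already meets $a_i S y_0$ trivially, namely at its endpoint $a_i$. So if $W_i'$ also meets $v^* N^* y_0 \subseteq a^* S y_0$, then $W_i'$ is a witnessing path intersecting both $a_i S y_0$ and $a^* S y_0$; since $a_i \prec_S a_{i+1} \prec_S a^*$, Lemma~\ref{lem:three-paths}\eqref{itm:three-paths-a} forces $W_i'$ to intersect $a_{i+1} S y_0$ as well, giving $a_{i+1} \le b_{i+1}$ in $P$, a contradiction. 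That is the entire argument; no use of the minimality of $W_i'$, of separatedness, or of planarity is needed for this claim.
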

\begin{subproof}
  Suppose to the contrary that \(W_i'\) intersects \(v^* N^* y_0\).
  Then \(W_i'\) is a witnessing path intersecting both \(a_i S y_0\)
  and \(a^* S y_0\). By Lemma~\ref{lem:three-paths}, \(W_i'\)
  intersects \(a_{i+1} S y_0\) as well. This implies
  \(a_{i+1} \le b_{i+1}\) in \(P\), a contradiction.
\end{subproof}

We know what it means for a vertex  of \(G\) to be left or right of \(N^*\).
We extend this definition to edges: an egde \(e = uv\) of \(G\)
is said to be \emph{left} (\emph{right}) of \(N^*\) if either it has an end which is
left (right) of \(N^*\), or its ends are non-consecutive vertices on \(N^*\)
and the paths \(u N^* y_0\), \(u N^* x_0\), and the edge \(e\) leave \(u\) in that clockwise (anti-clockwise) order.
(The definition does not depend on which end we take as \(u\)).

For every \(i \in [5n-1]\), \(a_i\) is right of \(N^*\) and \(b_{i+1}\) is
left of \(N^*\), so the path \(W_i'\) intersects \(x_0 N^* u^*\)
or \(v^* N^* u^*\) (or both).
Hence, the intersection of \(W_i'\) with \(N^*\) is either a subpath of \(x_0 N^* u^*\) or \(u^* N^* v^*\),
or the union of two disjoint subpaths of \(x_0 N^* u^*\) and \(u^* N^* v^*\) respectively.
In the former case, there exist vertices \(w_1\) and \(w_2\) on \(W_i'\)
with \(a_i < w_1 \le w_2 < b_{i+1}\) in \(P\) such that
\(W_i' \cap N^* = w_1 N^* w_2\), all edges of \(a_i W_i' w_1\) are right of
\(N^*\) and all edges of \(w_2 W_i' b_{i+1}\) are left of \(N^*\).
In the latter case, there exist vertices \(w_1\), \(w_2\), \(w_3\), \(w_4\) on \(W_i'\)
with \(a_i < w_1 \le w_2 < w_3 \le w_4 < b_{i+1}\) in \(P\) such that
\(W_i' \cap N^* = w_1 N^* w_2 \cup w_3 N^* w_4\), all edges of \(a_i W_i' w_1\) are right of
\(N^*\), all edges of \(w_4 W_i' b_{i+1}\) are left of \(N^*\), and either all edges
of \(w_2 W_i' w_3\) are left of \(N^*\), or all of them are right of \(N^*\).
Hence for every \(i \in [5n-1]\) there must exist a vertex \(w\) on the intersection of \(W_i'\) with \(N^*\)
such that no edge of \(a_i W_i' w\) is left of \(N^*\) and no edge of
\(w W_i' b_{i+1}\) is right of \(N^*\).
If there exist such \(w\) on \(x_0 N^* u^*\), then we say that
\(W_i'\) \emph{crosses} \(x_0 N^* u^*\) and similarly, if such \(w\)
can be found on \(u^* N^* v^*\), then we say that \(W_i'\) \emph{crosses} \(u^* N^* v^*\).
See Figure~\ref{fig:crossing}.
\begin{figure}
     \centering
     \begin{subfigure}[b]{0.19\textwidth}
         \centering
         \includegraphics{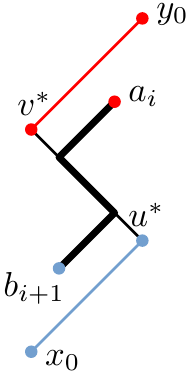}
         \caption{}
         \label{fig:cross-u-v-1}
     \end{subfigure}
     \hfill
     \begin{subfigure}[b]{0.19\textwidth}
         \centering
         \includegraphics{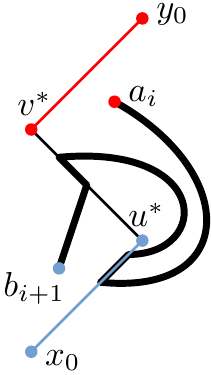}
         \caption{}
         \label{fig:cross-u-v-2}
     \end{subfigure}
     \hfill
     \begin{subfigure}[b]{0.19\textwidth}
         \centering
         \includegraphics{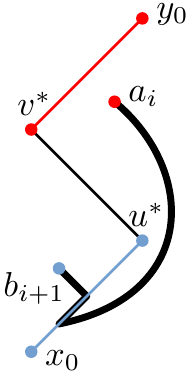}
         \caption{}
         \label{fig:cross-x-u-1}
     \end{subfigure}
     \hfill
     \begin{subfigure}[b]{0.19\textwidth}
         \centering
         \includegraphics{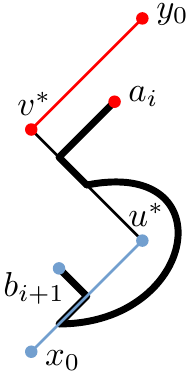}
         \caption{}
         \label{fig:cross-x-u-2}
     \end{subfigure}
     \begin{subfigure}[b]{0.19\textwidth}
         \centering
         \includegraphics{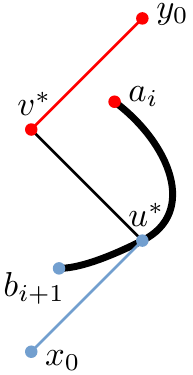}
         \caption{}
         \label{fig:cross-both}
     \end{subfigure}
        \caption{Ways in which \(W_i'\) can cross \(N^*\).
        In~\ref{fig:cross-u-v-1} and~\ref{fig:cross-u-v-2}, \(W_i'\)
        crosses \(u^* N^* v^*\).
        In~\ref{fig:cross-x-u-1} and~\ref{fig:cross-x-u-2}, \(W_i'\)
        crosses \(x_0 N^* u^*\).
        In~\ref{fig:cross-both}, \(W_i'\) crosses both \(u^* N^* v^*\) and
        \(x_0 N^* u^*\).
        }
        \label{fig:crossing}
\end{figure}

In the next part of the proof, we investigate, which paths among
\(W_1'\), \ldots, \(W_{5n-1}'\) cross \(u^* N^* v^*\).
We show, that 
the set of indices \(i \in [5n-2]\) such that the path \(W_i'\) crosses \(u^* N^* v^*\)
forms a range of consecutive integers.
Note that we only consider the paths with \(i \le 5n-2\) and the last path \(W'_{5n-1}\)
is not considered.
See Figure~\ref{fig:pretty} for an illustration.

\begin{figure}
    \centering
    \includegraphics{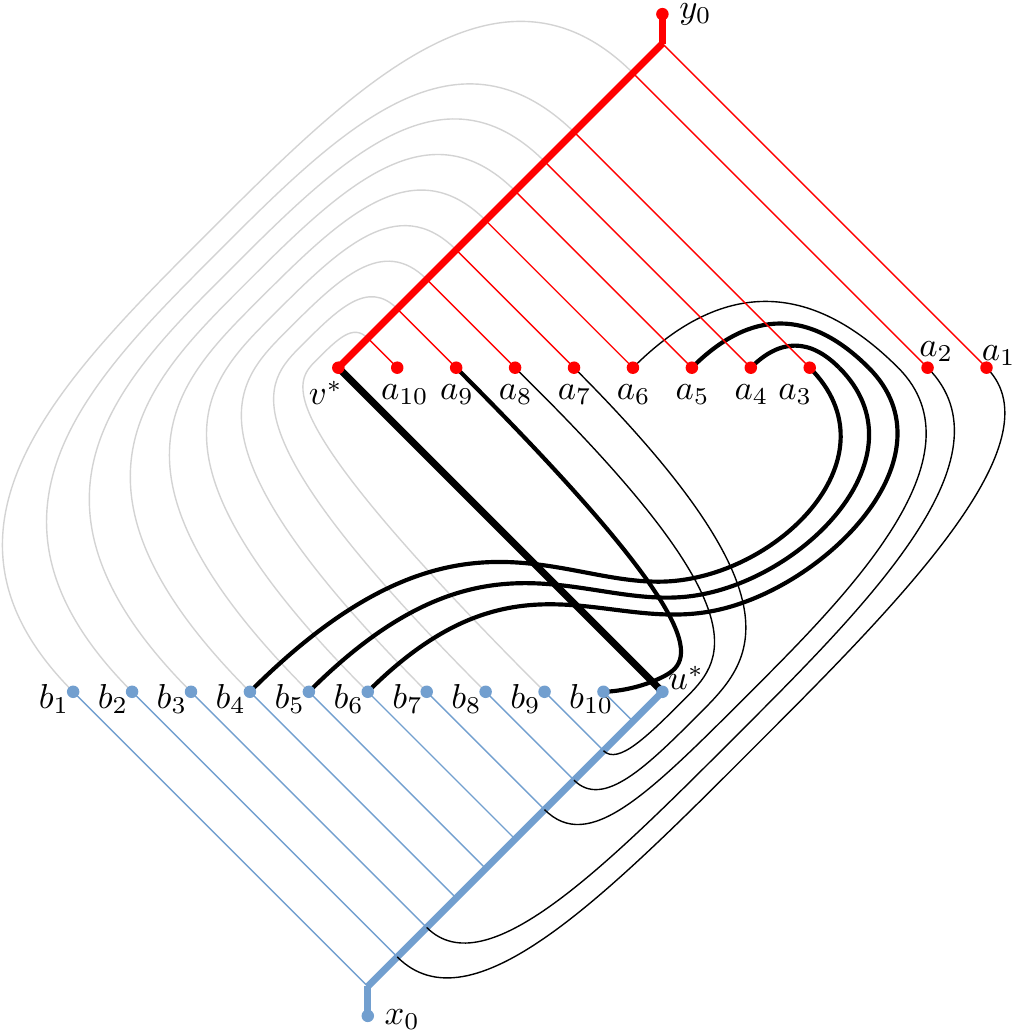}
    \caption{A standard example of size \(10\). The witnessing paths
    \(W_3'\), \(W_4'\), \(W_5'\), \(W_9'\) cross the path \(u^* N^* v^*\).
    After discarding the path \(W_9'\), we are left with the paths
    \(W_3'\), \(W_4'\), \(W_5'\) which have consecutive indices.
    }
    \label{fig:pretty}
\end{figure}

Suppose to the contrary that this is not true.
Then there exist non-consecutive integers \(i\) and \(k\) with
\(1 \le i < k \le 5n-2\) such that
\(W_i'\) and \(W_k'\) cross \(u^* N^* v^*\) and for every
\(j\) with \(i < j < k\), the path \(W_j'\) does not cross \(u^* N^* v^*\)
(and thus crosses \(x_0 N^* u^*\)).
Let us fix such \(i\) and \(k\).

Let \(w\) denote the least vertex of \(W_i'\)
which lies on \(u^* N^* v^*\).
Since \(W_i'\) crosses \(u^* N^* v^*\),
no vertex of the path
\(a_i' W_i' w\) is left of \(N^*\).
Let \(v\) denote the greatest vertex of \(W_i'\)
which lies on \(a_{i} S y_0\).
As \(w\) and \(v\) both lie on \(W_i'\) they are
comparable in \(P\).

\begin{nclaim}\label{clm:v-lt-w}
  We have \(v < w\) in \(P\).
\end{nclaim}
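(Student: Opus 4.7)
\begin{subproof}[Proof proposal]
The plan is to contradict the minimality of \(W_i'\) (among witnessing paths from \(a_i\) to \(b_{i+1}\), with respect to the number of edges outside \(N^*\)) by exhibiting a witnessing path with strictly fewer such edges. Suppose, for contradiction, that \(w \le v\) in \(P\). Since \(v\) and \(w\) both lie on the witnessing path \(W_i'\), this is equivalent to \(w\) appearing at or before \(v\) when traversing \(W_i'\) from \(a_i\) to \(b_{i+1}\). The candidate alternative is the walk \(\widetilde{W} := (a_i S v) \cdot (v W_i' b_{i+1})\), which yields a valid witnessing path from \(a_i\) to \(b_{i+1}\) (after extracting a simple subpath, which can only lower the edge count), because \(a_i \le v \le b_{i+1}\) in \(P\).

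The first key step will be to show that every edge of \(a_i S v\) lies outside \(N^*\). By Claim~\ref{clm:W_i'-disjoint}, \(v \notin v^* N^* y_0 = v^* S y_0\); a direct tree argument in \(S\) then shows that \(v\) cannot be an ancestor of \(v^*\) in \(S\), so every vertex of \(a_i S v\) lies strictly on the \(a_i\)-side of the first intersection of \(a_i S y_0\) with \(v^* S y_0\), and hence \(a_i S v\) is vertex-disjoint from the red portion of \(N^*\). The separation hypothesis \(T(I) \cap S(I) = \emptyset\) makes \(a_i S v\) disjoint from the blue portion \(x_0 T u^*\). Finally, any vertex intersection of \(a_i S v\) with the interior of the black portion \(u^* N^* v^*\) can be exploited to shorten \(\widetilde{W}\) further and restart the argument (using the planar position of the crossing vertex on the black portion), so without loss of generality \(a_i S v\) meets \(N^*\) at most at \(v\) itself.

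The second step will be to argue that the replaced segment \(a_i W_i' v\) contains strictly more than \(|E(a_i S v)|\) edges outside \(E(N^*)\), which together with the preceding paragraph will contradict the minimality of \(W_i'\). Because \(a_i\) lies right of \(N^*\) while the crossing vertex \(w\) lies on the black portion, the segment \(a_i W_i' w\) must contain at least one edge strictly right of \(N^*\), hence outside \(E(N^*)\); combined with the structural fact (from minimality of \(W_i'\)) that \(W_i' \cap u^* N^* v^*\) is a single subpath, a length comparison with the unique tree-path \(a_i S v\) in \(S\) should yield the required strict inequality. The hard part will be making this last comparison fully rigorous, since \(a_i W_i' v\) could in principle weave through \(N^*\) several times before reaching \(v\); a case split according to whether \(v = w\) or \(v > w\) in \(P\) will likely be necessary, since the latter case produces an additional right-to-left transition of \(W_i'\) near \(v\) that must be accounted for separately.
\end{subproof}
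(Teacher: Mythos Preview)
Your approach has a genuine gap that I do not see how to close. The contradiction you are aiming for requires
\[
|E(a_i S v)\setminus E(N^*)| \;<\; |E(a_i W_i' v)\setminus E(N^*)|,
\]
and nothing in the setup controls the length of \(a_i S v\). Minimality of \(W_i'\) gives you the \emph{opposite} direction (any competing witnessing path has at least as many edges off \(N^*\)), so you cannot bootstrap from it; you would have to prove the strict inequality directly, and the one ``edge right of \(N^*\)'' you isolate in \(a_i W_i' w\) is not nearly enough to beat an unbounded \(|E(a_i S v)|\). You acknowledge this yourself (``the hard part will be making this last comparison fully rigorous''), but the difficulty is not a missing detail: the inequality simply need not hold. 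The ``restart'' idea when \(a_i S v\) meets the black portion is also only a sketch, and your first step (disjointness of \(a_i S v\) from the red portion) tacitly assumes \(v\) lies on \(a_i S y_0\) below the branching point with \(a^* S y_0\), which is not immediate from \(v\notin v^* S y_0\).

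The paper bypasses all of this with a two-line argument using Lemma~\ref{lem:three-paths}: if \(w \le v\) in \(P\), then \(v^* N^* w\, W_i'\, v\) is a witnessing path that meets both \(a^* S y_0\) (at \(v^*\)) and \(a_i S y_0\) (at \(v\)); since \(a_i \prec_S a_{i+1} \prec_S a^*\), Lemma~\ref{lem:three-paths}\eqref{itm:three-paths-a} forces it to meet \(a_{i+1} S y_0\) as well, whence \(a_{i+1} \le v \le b_{i+1}\), a contradiction. I would abandon the minimality route and use this instead.
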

\begin{subproof}
Suppose towards a contradiction that we have \(w \le v\) in \(P\).
Then \(v^* N^* w W_i v\) is a witnessing path in \(P\)
which intersects both \(a^* S y_0\) and
\(a_i S y_0\).
Hence, by Lemma~\ref{lem:three-paths},
the path \(u^* N^* w W_i v\) intersects \(a_{i+1} S y_0\),
which implies that \(a_{i+1} \le v \le b_{i+1}\) in \(P\),
a contradiction.
\end{subproof}

\begin{nclaim}\label{clm:a_i'Sy_0}
  The path \(a_i S y_0\) is disjoint from \(u^* N^* v^*\).
\end{nclaim}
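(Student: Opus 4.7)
The plan is to argue by contradiction: I assume that some vertex \(z\) lies in \(a_i S y_0 \cap u^* N^* v^*\) and aim to derive \(a_{i+1} \le b_{i+1}\) in \(P\), which is forbidden since \(\{(a_1, b_1), \ldots, (a_{10n+1}, b_{10n+1})\}\) is a standard example. The main tool will be Lemma~\ref{lem:three-paths}\eqref{itm:three-paths-a}, applied to a carefully chosen witnessing path \(Q\) that intersects both \(a^* S y_0\) and \(a_i S y_0\) and that \emph{ends at \(b_{i+1}\)}, so that any intersection with \(a_{i+1} S y_0\) yields an element \(\le b_{i+1}\) above \(a_{i+1}\).

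The natural candidate is \(Q = v^* N^* w \cup w W_i' b_{i+1}\). Since the black portion of \(W^* = W(a^*, b^*)\) is a witnessing path from \(v^*\) to \(u^*\), traversing it from \(v^*\) up to \(w\) (in the reverse direction of \(N^*\)) gives a witnessing path from \(v^*\) to \(w\); concatenating with the suffix \(w W_i' b_{i+1}\) of \(W_i'\) produces a witnessing path from \(v^*\) to \(b_{i+1}\). Trivially \(Q\) meets \(a^* S y_0\) at \(v^*\). Moreover \(w W_i' b_{i+1}\) is disjoint from \(a_i S y_0\): a common vertex would lie strictly after \(w\) along \(W_i'\), hence strictly after \(v\) by Claim~\ref{clm:v-lt-w}, contradicting the maximality of \(v\) in \(W_i' \cap a_i S y_0\). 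So, provided an intersection vertex \(z\) of \(a_i S y_0 \cap u^* N^* v^*\) can be found satisfying \(z \le w\) in \(P\) (equivalently, lying on \(v^* N^* w\)), the path \(Q\) meets \(a_i S y_0\) at \(z\). Then Lemma~\ref{lem:three-paths}\eqref{itm:three-paths-a}, applied with \(a_i \prec_S a_{i+1} \prec_S a^*\), yields a vertex \(y \in Q \cap a_{i+1} S y_0\), and since \(Q\) is a witnessing path from \(v^*\) to \(b_{i+1}\) we get \(a_{i+1} \le y \le b_{i+1}\), the desired contradiction.

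The main obstacle I expect is ruling out the alternative configuration in which every intersection vertex \(z\) satisfies \(z > w\) in \(P\), so that \(z\) lies strictly between \(w\) and \(u^*\) on the black portion. In this case no such \(z\) lies on \(W_i'\), since otherwise \(z \in W_i' \cap a_i S y_0\) with \(z > w > v\) would again contradict the maximality of \(v\). Letting \(z_0\) be an intersection of smallest \(P\)-value, one can then form a cycle \(C\) from the two witnessing paths \(v S z_0\) (inside \(a_i S y_0\)) and \(v W_i' w N^* z_0\) (along \(W_i'\) up to \(w\), then up the black portion to \(z_0\)); this cycle satisfies \(V(C) \subseteq \Down_P(z_0) \subseteq \Down_P(b^*)\) and \(V(C) \subseteq \Up_P(v) \subseteq \Up_P(a_i)\). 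I would combine the planar positions of \(v, w, z_0\) relative to \(N^*\) (with \(v\) right of \(N^*\) and \(w, z_0\) on \(N^*\)) with the side constraints established earlier in the section (the \(a_j\) with \(j \le 5n\) are right of \(N^*\) while the \(b_j\) with \(j \le 5n\) are left of \(N^*\)) to force \(C\) to enclose some \(a_j\) or \(b_j\) whose enclosure is forbidden by Lemma~\ref{lem:enclosed}. This planarity-plus-enclosure case analysis is the most delicate step of the argument and is what I expect to require the most care to pin down precisely.
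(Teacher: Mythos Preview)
Your overall strategy—split on whether an intersection point of \(a_i S y_0\) with the black portion lies at or below \(w\); dispatch the first case with Lemma~\ref{lem:three-paths}; handle the second with an enclosure argument—matches the paper's. Your Case~1 is correct and is essentially the paper's argument: the paper uses the shorter witnessing path \(v^* N^* w\) rather than your \(Q = v^* N^* w\, W_i'\, b_{i+1}\), but both meet \(a^* S y_0\) at \(v^*\) and \(a_i S y_0\) at \(z\), so Lemma~\ref{lem:three-paths} fires and yields \(a_{i+1} \le b_{i+1}\).

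Case~2, however, has a genuine gap. Your cycle \(C\) satisfies \(V(C) \subseteq \Up_P(a_i) \cap \Down_P(b^*)\), so Lemma~\ref{lem:enclosed} gives a contradiction only if \(C\) encloses some \(a_j\) with \(j \ne i\) or some \(b_j\) with \(j\) different from the index of \(b^*\). But your cycle (built from \(v S z_0\), \(v W_i' w\), and \(w N^* z_0\)) lies entirely on or to the right of \(N^*\): separation rules out the blue portion, minimality of \(z_0\) rules out other black-portion hits below \(z_0\), and a short tree-ancestry argument rules out the red portion on \(v S z_0\). Hence the region it bounds is on the right of \(N^*\). The \(b_j\) with \(j \le 5n\) are left of \(N^*\), so none is enclosed; and there is no evident \(a_j\) with \(j \ne i\) forced inside (the only obvious candidate is \(a_i\) itself, which is useless since the cycle is only known to lie in \(\Up_P(a_i)\)). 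The ``planarity-plus-enclosure case analysis'' you defer is exactly the missing idea, and I do not see how to supply it with your choice of cycle.

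The paper's fix is to choose a different cycle. Take the \emph{greatest} intersection vertex \(z\) (so \(w < z\)) and form \(C = v^* S z \cup z N^* v^*\). The crucial difference is routing through \(v^*\) rather than through your \(v\): this gives \(V(C) \subseteq \Up_P(v^*) \subseteq \Up_P(a^*)\). Now the natural enclosed vertex is \(a_i\) itself—it is right of \(N^*\), and one checks that the path from \(a_i\) to \(C\) in \(S\) lands on \(C\) without escaping—and since \(i \le 5n-2\) while \(a^* \in \{a_{5n+1}, a_{5n+2}\}\), Lemma~\ref{lem:enclosed} applies. In short: to enclose \(a_i\) you need a cycle above some \(a_j\) with \(j \ne i\); the right anchor is \(a^*\) via \(v^*\), not \(a_i\) via \(v\).
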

\begin{subproof}
Suppose to the contrary that \(a_i S y_0\) does intersect
\(u^* N^* v^*\) and let \(z\) denote the greatest vertex of that
intersection so that \(z S y_0\) is internally disjoint from
\(u^* N^* v^*\).
It is impossible that \(z\) lies on \(w N^* v^*\):
If it was the case, then the witnessing path \(w N^* v^*\)
would intersect both \(a_i S y_0\) and \(a^* S y_0\),
so by Lemma~\ref{lem:three-paths},
the witnessing path \(w N^* v^*\) would intersect \(a_{i+1} S y_0\)
and we would have \(a_{i+1} \le w \le b_{i+1}\) in \(P\).
Hence \(z\) does not lie on \(w N^* v^*\), which implies that
\(w\) is an internal vertex of \(z N^* v^*\) and in particular
\(w < z\) in \(P\).
Since \(a_i \prec_S a^*\) and \(a_i \le w < z\) in \(P\), the path
\(a_i S w\) is disjoint from the path \(v^* S z\).
Moreover, \(w\) is the only vertex of \(a_i S w\) which lies on
\(z N^* v^*\).
As \(a_i\) is right of \(N^*\), 
the vertex \(a_i\) must lie in the region
bounded by the cycle \(C = v^* S z N^* v^*\).
But \(C\) is contained in the union of the witnessing paths
\(v^* N^* y_0\) and \(v^* N^* z S y_0\), so
\(V(C) \subseteq \Up_P(v^*) \subseteq \Up_P(a^*)\).
Hence, \(a_i\) is enclosed by \(a^*\) and we obtain a contradiction
with Lemma~\ref{lem:enclosed}.
\end{subproof}

\begin{nclaim}\label{clm:two-internally-disjoint-paths}
  The paths \(v^* S v\) and \(v^* N^* w W_i' v\) are internally disjoint.
\end{nclaim}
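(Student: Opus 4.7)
I plan to argue by contradiction: suppose some vertex $z$ lies in both $v^* S v$ and $v^* N^* w W_i' v$ but is distinct from the shared endpoints $v^*$ and $v$. The backbone of the argument is the structural observation that the path $v^* S v$ is contained in $v^* S y_0 \cup a_i S y_0$. To prove this, I would consider the lowest common ancestor $\ell$ of $v^*$ and $v$ in the tree $S$ (rooted at $y_0$): if $\ell = v^*$, then $v^* S v \subseteq v S y_0 \subseteq a_i S y_0$; if $\ell = v$, then $v^* S v \subseteq v^* S y_0$; and otherwise, $v^* S v$ splits at $\ell$ into $v^* S \ell \subseteq v^* S y_0$ and $\ell S v \subseteq v S y_0 \subseteq a_i S y_0$.

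With this containment in hand, I split on which piece of $v^* N^* w W_i' v$ carries $z$. Suppose first that $z$ lies on $v^* N^* w$, a subpath of the black portion $u^* W(a^*, b^*) v^*$ of $N^*$. By the defining property of $W(a^*, b^*)$, this black portion is internally disjoint from $a^* S y_0$; by Claim~\ref{clm:a_i'Sy_0}, it is disjoint from $a_i S y_0$; and its other endpoint $u^*$ lies on $x_0 T b^*$, which is disjoint from $a^* S y_0 \cup a_i S y_0$ because the standard example is separated. Combining these facts with the structural containment, the only candidate for $z$ in $(v^* S y_0 \cup a_i S y_0) \cap v^* N^* w$ is $v^*$ itself, which is excluded.

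Suppose instead that $z$ lies on $w W_i' v$, so $z \in W_i'$. Since $v < w$ in $P$ by Claim~\ref{clm:v-lt-w}, the vertex $z$ appears strictly later than $v$ along $W_i'$. The structural containment places $z$ in $v^* S y_0$ or in $a_i S y_0$. The former contradicts Claim~\ref{clm:W_i'-disjoint}, which ensures that $W_i'$ is disjoint from $v^* N^* y_0 = v^* S y_0$. The latter contradicts the choice of $v$ as the greatest vertex of $W_i'$ on $a_i S y_0$.

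The main obstacle I expect is carrying out the tree-structure bookkeeping cleanly, in particular verifying the containment $v^* S v \subseteq v^* S y_0 \cup a_i S y_0$ in all three positions of $\ell$; once that is in place, every remaining case folds immediately into one of Claims~\ref{clm:v-lt-w}, \ref{clm:a_i'Sy_0}, or \ref{clm:W_i'-disjoint}, or into the extremality defining $v$ and $w$ on $W_i'$.
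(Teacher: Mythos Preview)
Your proof is correct and follows essentially the same skeleton as the paper's: both arguments split the composite path into the pieces $v^* N^* w$ and $w W_i' v$, handle the latter via the maximality of $v$ together with Claim~\ref{clm:W_i'-disjoint}, and use (implicitly or explicitly) the containment $v^* S v \subseteq v^* S y_0 \cup v S y_0$.

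The one genuine difference is in how you dispose of an intersection of $v^* N^* w$ with the $a_i$-side of $v^* S v$. The paper argues directly via Lemma~\ref{lem:three-paths}: if $v^* N^* w$ met $v S y_0$, then this witnessing path would hit both $a^* S y_0$ and $a_i S y_0$, hence also $a_{i+1} S y_0$, forcing $a_{i+1} \le w \le b_{i+1}$. You instead invoke the already-proven Claim~\ref{clm:a_i'Sy_0}, which says outright that $a_i S y_0$ is disjoint from $u^* N^* v^*$ and hence from its subpath $v^* N^* w$. Your route is slightly more modular (you reuse a claim rather than rerunning a three-paths argument), while the paper's route avoids the explicit appeal to Claim~\ref{clm:v-lt-w} by working with $v S y_0$ rather than all of $a_i S y_0$. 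Both are equally valid and of comparable length.
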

\begin{subproof}
By definition of \(v\), the path \(w W_i' v\) intersects
\(v S y_0\) only in \(v\), and by Claim~\ref{clm:W_i'-disjoint},
\(w W_i' v\) is disjoint from \(v^* S y_0\).
Hence \(w W_i' v\) intersects \(v^* S v\) only in \(v\).
Since \(v^* S y_0 = v^* N^* y_0\), the path \(v^* N^* w\)
intersects \(v^* S y_0\) only in \(v^*\).
It remains to show that \(v^* N^* w\) is disjoint from
\(v S y_0\). Suppose to the contrary that
\(v S y_0\) intersects \(v^* N^* w\).
In such case, the witnessing path \(v^* N^* w\)
intersects both \(a^* S y_0\) and \(a_{i} S y_0\),
so by Lemma~\ref{lem:three-paths}, \(v^* N^* w\)
intersects \(a_{i+1} S y_0\) as well.
Thus, \(a_{i+1} \le w \le b_{i+1}\) holds in
\(P\), a contradiction.
\end{subproof}

By Claim~\ref{clm:two-internally-disjoint-paths}, the union
of the paths \(v^* S v\) and \(v^* N^* w W_i v\) is a cycle.
Let us denote that cycle by \(C\).
The intersection of \(C\) with \(N^*\) is a path,
and every vertex in the the region bounded by \(C\)
which does not lie on \(N^*\)
is right of \(N^*\). See Figure~\ref{fig:cp}.
\begin{figure}
    \centering
    \includegraphics{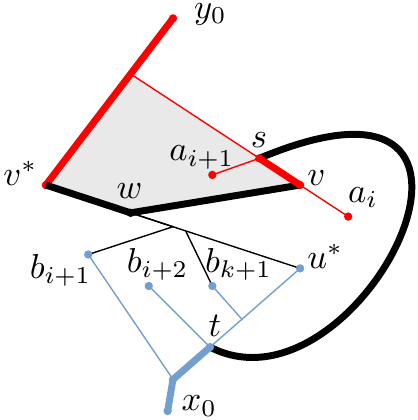}
    \caption{The shaded area above \( w \) is the region bounded by \(C\).
    The bolded \(x_0\)--\(y_0\) path is \(N_0\).}
    \label{fig:cp}
\end{figure}
We note that there does not have to exist \(j\) such that
\(V(C) \subseteq \Up_Q(a_j)\), so the following claim is not a contradiction
with Lemma~\ref{lem:enclosed}.

\begin{nclaim}\label{clm:a_j-inside}
  For each integer \(j\) with \(i + 1 \le j \le 5n\), the vertex
  \(a_j\) lies in the region bounded by \(C\).
\end{nclaim}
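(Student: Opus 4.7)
I would argue by contradiction: suppose that for some $j$ with $i+1 \le j \le 5n$ the vertex $a_j$ lies outside the region bounded by $C$. Every vertex of $V(C)$ that is not on $N^*$ (namely the vertices on $v^* S v$ and on $w W_i' v$) is right of $N^*$, so the interior of the region bounded by $C$ is entirely right of $N^*$. Because $a_j$ is also right of $N^*$ (as $j \le 5n$), the assumption places $a_j$ in the unbounded right-of-$N^*$ component of the complement of $C$.

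I would then track the path $a_j S y_0$. By separation of the standard example, $a_j S y_0$ is disjoint from $x_0 T b^*$, hence from $x_0 N^* u^*$. Adapting the enclosure argument of Claim~\ref{clm:a_i'Sy_0} with $a_j$ in place of $a_i$ shows that $a_j S y_0$ is also disjoint from $u^* N^* v^*$: any maximal intersection $z$ would give a cycle $v^* S z \cup v^* N^* z$ of vertices in $\Up_P(v^*) \subseteq \Up_P(a^*)$ enclosing $a_j$, contradicting Lemma~\ref{lem:enclosed}. Since $a_j S y_0$ ends at $y_0 \in v^* S y_0$, the path $a_j S y_0$ joins $v^* N^* y_0 = v^* S y_0$ at the least common ancestor $L$ of $a_j$ and $v^*$ in the tree $S$.

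The last step is a planarity argument based on the clockwise tree order $\prec_S$. Because $a_i \prec_S a_j \prec_S a^*$, with $v$ an ancestor of $a_i$ and $v^*$ an ancestor of $a^*$ in $S$, the descendant of $L$ along the subpath $a_j S L$ lies in the clockwise sector at $L$ bounded by $v^* S v$ on one side and the arc $v^* N^* w$ on the other, which is precisely the interior of the region bounded by $C$. Since $a_j S L$ is disjoint from $C$ away from $L$ (by the preceding step), following $a_j S L$ from $L$ back to $a_j$ we never leave the interior of the region bounded by $C$, so $a_j$ lies inside that region, contradicting the assumption.

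The main obstacle will be making the planarity argument in the last paragraph fully rigorous, in particular verifying that the clockwise order $\prec_S$ truly forces $a_j S L$ to enter the interior side of $C$ at $L$. This depends on the position of $w$ strictly inside the segment $u^* N^* v^*$ (so that $v^* N^* w$ sits on the $v$-side of $v^*$ in the drawing of $S$) and on the hypothesis $a_i \prec_S a_j$; the latter is precisely why the claim is stated for $j \ge i + 1$ rather than $j \ge i$, since for $j = i$ the vertex $a_i$ lies on the $v$-side of $v^* S v$ and hence outside the region bounded by $C$.
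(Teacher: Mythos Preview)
Your approach diverges from the paper's, and both Step~2 and Step~3 have genuine gaps.

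\textbf{Step 2.} You claim that the enclosure argument of Claim~\ref{clm:a_i'Sy_0} adapts to show that \(a_j S y_0\) is disjoint from \(u^* N^* v^*\). But the proof of Claim~\ref{clm:a_i'Sy_0} has two cases, and the first case (when the intersection point \(z\) lies on \(w N^* v^*\)) relies on Lemma~\ref{lem:three-paths} together with the fact that \(w\) lies on \(W_i'\), yielding \(a_{i+1}\le w\le b_{i+1}\). That step uses the specific index \(i\) and does not carry over to an arbitrary \(j\). Your one-line enclosure sketch only addresses the second case, and even there you assert without proof that \(a_j\) lies in the region bounded by the auxiliary cycle \(v^* S z \cup v^* N^* z\); this is itself a planarity statement of the same flavour as the claim you are trying to prove, and it is not obvious (for instance, one must check on which side of \(N^*\) the arc \(L S z\) of that cycle lies, and then on which side of the cycle \(a_j\) sits).

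\textbf{Step 3.} You let \(L\) be the lowest common ancestor of \(a_j\) and \(v^*\) in \(S\) and argue about a ``clockwise sector at \(L\) bounded by \(v^* S v\) on one side and the arc \(v^* N^* w\) on the other''. But \(v^* N^* w\) meets \(v^* S v\) only at \(v^*\), so this sector description is only meaningful when \(L=v^*\). In general \(L\) lies on the red portion \(v^* N^* y_0\) and need not be on \(C\) at all; you have not verified that \(L\) lies on \(v^* S v\), and without that the argument that \(a_j S L\) enters the interior of \(C\) at \(L\) has no content.

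The paper's proof avoids both issues by working directly with \(C\): assuming \(a_j\) lies outside the region bounded by \(C\), it takes the least vertex \(w'\) of \(a_j S y_0\) on \(C\) and rules out each possible location of \(w'\). If \(w'\in w W_i' v\), Lemma~\ref{lem:three-paths} (applied to \(a_i\prec_S a_{i+1}\preceq_S a_j\)) forces \(a_{i+1}\le w\le b_{i+1}\). If \(w'\) is an internal vertex of \(v^* S v\), the ordering \(a_i\prec_S a_j\prec_S a^*\) is violated. The remaining case \(w'\in w N^* v^*\setminus\{w\}\) is handled not by enclosure but by observing that \(a_j S w'\) (which lies outside the region) would have to cross \(x_0 N^* w\) to reach \(w'\) from the right of \(N^*\), yet it cannot meet \(x_0 N^* u^*\) (separation) nor \(u^* N^* w\) (since \(V(a_j S w')\subseteq\Down_P(w')\) and \(V(u^* N^* w)\subseteq\Up_P(w)\) with \(w'<w\)). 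This last step is the missing idea in your argument.
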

\begin{subproof}
Fix an integer \(j\) with \(i + 1 \le j \le 5n\),
and suppose towards a contradiction that \(a_j\)
does not lie in the region bounded by \(C\).
Let \(w'\) denote the least vertex of \(a_j S y_0\) which lies on
\(C\) so that all vertices of \(a_j S w'\) except \(w'\)
are outside the region bounded by \(C\).
Suppose that \(w'\) lies on \(w W_i' v\).
Then \(w W_i' v\) is a witnessing path intersecting both
\(a_i S y_0\) and \(a_j S y_0\), so
by Lemma~\ref{lem:three-paths},
the path \(w W_i' v\) intersects \(a_{i+1} S y_0\) as well,
and we have \(a_{i+1} \le w \le b_{i+1}\) in \(P\),
which is a contradiction.
Furthermore, since \(a_i \prec_S a_{j} \prec_S a^*\),
it is impossible that \(w'\) is an internal vertex of
\(v^* S v\).
Hence, \(w'\) has to be a vertex of \(w N^* v^*\)
distinct from \(w\) and thus we have \(w' < w\) in \(P\).
Since \(a_j\) is right of \(N^*\),
the path \(a_j S w'\) has to intersect \(x_0 N^* w\).
As the standard example is separated, the path
\(a_j S w'\) does not intersect \(x_0 N^* u^*\),
so it has to intersect \(u^* N^* w\).
But this is also impossible as \(V(a_j S w') \subseteq \Down_P(w')\),
\(V(u^* N^* w) \subseteq \Up_P(w)\) and \(w' < w\) in \(P\).
This contradiction proves the claim.
\end{subproof}

\begin{nclaim}\label{clm:wle}
  We have \(w \le b_{k+1}\) in \(P\).
\end{nclaim}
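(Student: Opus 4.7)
I would aim to show $w \le b_{k+1}$ by exhibiting a vertex on $W_k'$ that lies above $w$ in $P$: any such $w'$ gives $w \le w' \le b_{k+1}$ because $w'$ lies on the witnessing path $W_k'$ ending at $b_{k+1}$. Since $W_k'$ crosses $u^* N^* v^*$, it meets the chain $u^* W v^*$, and every meeting vertex is comparable with $w$ in $P$. It therefore suffices to rule out the scenario in which every vertex of $W_k' \cap u^* W v^*$ lies strictly below $w$ in $P$, equivalently on $w N^* v^* \setminus \{w\} \subseteq C$.

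Under that contradiction hypothesis, my plan is to mirror the construction of $C$ using $W_k'$ in place of $W_i'$. Let $w_k$ denote the least vertex of $W_k'$ on $u^* N^* v^*$ and $v_k$ the greatest vertex of $W_k' \cap a_k S y_0$. The proofs of Claims~\ref{clm:v-lt-w}, \ref{clm:a_i'Sy_0}, and \ref{clm:two-internally-disjoint-paths} use only that $W_i'$ crosses $u^* N^* v^*$, so they transport verbatim to $W_k'$, yielding $v_k < w_k$ in $P$, the disjointness $a_k S y_0 \cap u^* N^* v^* = \emptyset$, and a well-defined cycle $C' := v^* S v_k \cup v_k W_k' w_k \cup w_k N^* v^*$. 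Under the hypothesis, $w_k$ sits strictly between $v^*$ and $w$ on the chain $u^* W v^*$, so the $N^*$-arc $w_k N^* v^*$ of $C'$ is a proper sub-arc of the $N^*$-arc $w N^* v^*$ of $C$, and the two cycles share the segment $w_k N^* v^*$.

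The plan for extracting a contradiction is to trace the element $a_{k+1}$. By Claim~\ref{clm:a_j-inside} applied to $C$ (valid because $i+1 \le k+1 \le 5n$) and its analogue for $C'$ (valid because $k+1 \le 5n$), the element $a_{k+1}$ lies inside both bounded regions, while $y_0$ lies on the exterior face and is therefore outside both. The witnessing path $a_{k+1} S y_0$ must consequently cross the boundaries of both cycles. Applying Lemma~\ref{lem:three-paths}(\ref{itm:three-paths-a}) to the triple $a_i \prec_S a_{k+1} \prec_S a^*$ together with the crossings that land on the $S$-arcs $v^* S v$ or $v^* S v_k$, I expect to force a common vertex of $W_k'$ with $a_{k+1} S y_0$, giving $a_{k+1} \le b_{k+1}$ in $P$ and contradicting the standard example.

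The main obstacle is the planar case analysis. The crossings of $a_{k+1} S y_0$ with $C$ and $C'$ may land on either the $S$-arcs or the witnessing-path arcs of each cycle, and exits on $v W_i' w$ or $v_k W_k' w_k$ are not handled directly by Lemma~\ref{lem:three-paths}. For such exits I would resort to Lemma~\ref{lem:enclosed}: the nested cycles together with the exit vertex should enclose some $a_j$ or $b_j$ of the standard example in a cycle contained in the down-set of some $b_\ell$ (or the up-set of some $a_\ell$), giving the required contradiction.
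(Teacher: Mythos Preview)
Your setup is sound: the claims about $v_k < w_k$, about $a_k S y_0$ being disjoint from $u^*N^*v^*$, and about the cycle $C'$ being well defined do transfer verbatim from $W_i'$ to $W_k'$, and the analogue of Claim~\ref{clm:a_j-inside} does place $a_{k+1}$ inside $C'$. The gap is in the endgame. Knowing that $a_{k+1}$ lies in both bounded regions tells you that $a_{k+1} S y_0$ exits each cycle, but you have not explained why any exit forces a vertex of $W_k'$ onto $a_{k+1}Sy_0$. An exit from $C'$ through the arc $v_k W_k' w_k$ would indeed give $a_{k+1}\le b_{k+1}$, but the exit could just as well occur on $w_k N^* v^*$ (nothing rules out $a_{k+1}Sy_0$ meeting the black portion of $N^*$; the analogue of Claim~\ref{clm:a_i'Sy_0} only covers $a_kSy_0$) or on the $S$-arc $v^*Sv_k$, and neither of these yields the intersection you need. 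Your fallback to Lemma~\ref{lem:enclosed} is not specified: you have not named a cycle contained in some $\Down_P(b_\ell)$ or $\Up_P(a_\ell)$ enclosing an element of the standard example. As it stands, the argument does not close.

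The paper's proof avoids the second cycle entirely. With $w'$ the least vertex of $W_k'\cap u^*N^*v^*$ and assuming $w' < w$, it builds an $x_0$--$y_0$ path $N = x_0\,T\,u\,W_k'\,w'\,N^*\,y_0$, where $u$ is the least vertex of $w' W_k' b_{k+1}$ on $x_0 T b_{k+1}$. One checks (via Lemma~\ref{lem:three-paths}) that $w' W_k' b_{k+1}$ misses $x_0 T b_{i+1}$, so $b_{i+1}\prec_T u$ and $b_{i+1}$ is left of $N$; on the other hand $w$ lies on $w' N^* y_0 \subseteq N$, so $w$ is not left of $N$. Hence $w W_i' b_{i+1}$ must hit $N$. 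A short elimination shows the hit cannot be on $v^* N y_0$ (Claim~\ref{clm:W_i'-disjoint}) nor on $w' N v^*$ (all those vertices are below $w'<w$), so it lands on $x_0 N w' \subseteq \Down_P(u) \subseteq \Down_P(b_{k+1})$, giving $w \le b_{k+1}$ directly. If you want to salvage your route, you would need a comparably explicit planar separation argument in place of the unspecified appeal to Lemma~\ref{lem:enclosed}.
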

\begin{subproof}
Let \(w'\) denote the least element of the intersection
of \(W_k'\) with \(u^* N^* v^*\). By our assumption,
\(W_k'\) crosses \(u^* N^* v^*\), so \(w'\) is well defined,
no vertex on \(x_0 N^* w'\) is right of \(N^*\), and no
vertex on \(w' N^* y_0\) is left of \(N^*\).
Since both \(w\) and \(w'\) lie on the witnessing path
\(u^* N^* v^*\), they are comparable in \(P\).
If we have \(w \le w'\) in \(P\), then \(w \le w' \le b_{k+1}\)
holds in \(P\), so the claim holds.
Let us hence assume that \(w' < w\) in \(P\) and thus
\(w'\) lies on \(w N^* u^*\) and is distinct from \(w\).

The path \(w' W_k' b_{k+1}\) has to be disjoint from \(x_0 T b_{i+1}\)
because otherwise \(w' W_k' b_{k+1}\) would be a witnessing path intersecting
both \(x_0 T b_{i+1}\) and \(x_0 T b_{k+1}\), so by Lemma~\ref{lem:three-paths},
the path \(w' W_k' b_{k+1}\) would intersect \(x_0 T b_k\) as well and we would
have \(a_k \le w' \le b_k\) in \(P\).
Let \(u\) denote the least vertex on \(w' W_k' b_{k+1}\)
which lies on \(x_0 T b_{k+1}\).
Consider the path \(N = x_0 T u W_k' w' N^* y_0\).
Since  \(w' W_k' b_{k+1}\) is disjoint from \(x_0 T b_{i+1}\), we have
\(b_{i+1} \prec_T u\) and the vertex \(b_{i+1}\) must be left of \(N\).
Moreover, since \(w'\) lies on \(w N^* v^*\) and no vertex of \(N\) is right
of \(N^*\), the vertex \(w\) is not left of \(N\). Hence the path
\(w W_i' b_{i+1}\) has to intersect \(N\).
By Claim~\ref{clm:W_i'-disjoint}, it does not intersect \(v^* N y_0\),
and it does not intersect \(v^* N w'\),
since \(V(v^* N w') \subseteq \Down_P(w')\) and \(w' < w\) in \(P\).
Therefore \(w W_i' b_{i+1}\) has to intersect \(x_0 N w'\).
But \(V(x_0 N w') \subseteq \Down_P(u) \subseteq \Down_P(b_{k+1})\),
so we have \(w \le b_{k+1}\) in \(P\), as claimed.
\end{subproof}

By our assumption, the path \(W_{i+1}'\) does not cross the path \(u^* N^* v^*\),
so it has to intersect the path \(x_0 T u^*\).
Let \(t\) denote the least vertex of the intersection of \(W_{i+1}'\) with
\(x_0 T u^*\).

\begin{nclaim}\label{clm:tle}
  We have \(t \le b_{k+1}\) in \(P\).
\end{nclaim}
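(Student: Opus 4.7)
The plan is to mirror the proof of Claim~\ref{clm:wle}, re-using the $x_0$--$y_0$ path $N = x_0 T u W_k' w' N^* y_0$ built there, where $u$ is the least vertex on $w' W_k' b_{k+1}$ lying on $x_0 T b_{k+1}$. We have $V(x_0 T u \cup u W_k' w') \subseteq \Down_P(u) \subseteq \Down_P(b_{k+1})$, and I would additionally record that $V(w' N^* v^*) \subseteq \Down_P(w') \subseteq \Down_P(b_{k+1})$, since on the black portion of $N^*$ the $P$-value decreases from $u^*$ down to $v^*$ and $w' \le u$ in $P$. The goal is to find a vertex $z$ of $W_{i+1}'$ lying on $x_0 T u \cup u W_k' w' \cup w' N^* v^*$ that sits on $W_{i+1}'$ weakly after $t$, as this will force $t \le z \le b_{k+1}$.

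If $t$ already lies on $x_0 T u$, then $t \le u \le b_{k+1}$ and we are done; assume henceforth that $t \notin x_0 T u$. The geometric key is that every vertex of $N$ is on or left of $N^*$. Indeed, $x_0 T u$ either stays on the blue portion of $N^*$ or branches off at the lowest common ancestor of $b_{k+1}$ and $b^*$ in $T$ into $b_{k+1}$'s subtree, which is left of $N^*$; $u W_k' w'$ lies in $w' W_k' b_{k+1}$ and so has no edge right of $N^*$ by the crossing property of $w'$; and $w' N^* y_0$ lies on $N^*$. This pins $t$, which lies on $N^*$ but not on $N$, strictly to the right of $N$. To place $b_{i+2}$ on the opposite side, I would re-run the Lemma~\ref{lem:three-paths} argument from Claim~\ref{clm:wle} to show that $w' W_k' b_{k+1}$ is disjoint from $x_0 T b_{i+2}$ (using $b_{i+2} \preceq_T b_k \prec_T b_{k+1}$, or handling the degenerate case $i+2 = k$ directly), which yields $b_{i+2} \prec_T u$ and hence $b_{i+2}$ is left of $N$.

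With $t$ right of $N$ and $b_{i+2}$ left of $N$, the path $t W_{i+1}' b_{i+2}$ must meet $N$ at some vertex $z$. By Claim~\ref{clm:W_i'-disjoint}, $z$ cannot lie on $v^* N^* y_0$, so $z$ lies on $x_0 T u$, $u W_k' w'$, or $w' N^* v^*$, and in each case $z \le b_{k+1}$ by the containments above. Since $z$ lies on $W_{i+1}'$ at or after $t$, we have $t \le z \le b_{k+1}$. I expect the main technical hurdle to be the geometric verification that $N$ stays on or left of $N^*$, which requires a mild case analysis on whether $u$ lies on the common prefix of $x_0 T b_{k+1}$ and $x_0 T b^*$ in $T$.
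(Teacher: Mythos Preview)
Your approach works, but it is far more elaborate than necessary. The paper dispatches the claim in two lines: the witnessing path \(t\,W_{i+1}'\,b_{i+2}\) meets \(x_0 T b_{i+2}\) (at \(b_{i+2}\)) and \(x_0 T b^*\) (at \(t\), since \(t\in x_0 T u^*\subseteq x_0 T b^*\)); because \(b_{i+2}\prec_T b_{k+1}\prec_T b^*\), Lemma~\ref{lem:three-paths} forces it to meet \(x_0 T b_{k+1}\), and any such intersection point \(z\) satisfies \(t\le z\le b_{k+1}\). No reference to \(N\), \(w'\), or the left/right geometry of Claim~\ref{clm:wle} is needed.

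Your route via the path \(N\) is essentially sound, but note two small wrinkles. First, the path \(N\) is constructed in Claim~\ref{clm:wle} only under the assumption \(w'<w\); you should define \(w'\) and \(u\) afresh rather than ``re-use'' them. Second, you only handle the case \(t\notin x_0 T u\), but \(t\) could lie on \(u W_k' w'\) or (if \(w'=u^*\)) on \(w' N^* v^*\); in those cases \(t\le u\le b_{k+1}\) or \(t\le w'\le b_{k+1}\) directly, so the fix is to replace ``if \(t\in x_0 T u\)'' by ``if \(t\in N\)''. Your topological step---that \(N\) on-or-left of \(N^*\) forces a vertex of \(N^*\setminus N\) to be right of \(N\)---is correct but deserves a sentence of justification. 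Ironically, you still invoke Lemma~\ref{lem:three-paths} to show \(w' W_k' b_{k+1}\) avoids \(x_0 T b_{i+2}\), so the machinery you import buys nothing over applying that lemma directly to \(t\,W_{i+1}'\,b_{i+2}\) as the paper does.
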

\begin{subproof}
  The witnessing path \(t W_{i+1}' b_{i+2}\) intersects
  \(x_0 T b_{i+2}\) and \(x_0 T b^*\), so by Lemma~\ref{lem:three-paths}
  it intersects \(x_0 T b_{k+1}\) as well. Hence we indeed have
  \(t \le b_{k+1}\) in \(P\).
\end{subproof}

By Claim~\ref{clm:a_j-inside}, \(a_{i+1}\) lies in the region
bounded by \(C\).
The vertex \(t\) does not lie in the interior of the
region bounded by \(C\) since it lies on \(N^*\).
Hence the path \(a_{i+1} W_{i+1}' t\) has to intersect the cycle \(C\).
By Claim~\ref{clm:W_i'-disjoint}, it does not intersect \(u^* S y_0\).
Let \(s\) denote the greatest vertex of the intersection of
\(a_{i+1} W_{i+1}' t\) with \(C\).
By Claim~\ref{clm:W_i'-disjoint}, \(s\) does not lie on \(u^* S y_0\),
and it does not lie on \(v^* N^* w W_i' v\), as that would imply
\(a_{i+1} \le s \le w \le b_{i+1}\) in \(P\). Hence,
in \(S\) the vertex \(s\) is ancestor of \(u\) but not of \(u^*\).
Consider the path
\[
  N_0 = x_0 T t W_{i+1}'s S v W_i' w N^* v^* T y_0.
\]
See Figure~\ref{fig:cp}.
By Claim~\ref{clm:a_j-inside}, the vertex \(a_{k+1}\) lies in the region bounded
by \(C\) and hence is not left of \(N_0\).
The vertex \(b_{k+2}\) is left of \(N^*\) and thus is left of \(N_0\).
Hence the path \(W_{k+1}'\) has to intersect \(N_0\).
By Claim~\ref{clm:W_i'-disjoint} is does not intersect \(v^* N_0 y_0\),
so it has to intersect \(x_0 N_0 v^*\).
But we have
\[
  V(x_0 N_0 v^*) = V(x_0 N_0 v) \cup V(v N_0 v^*) \subseteq \Down_P(t) \cup \Down_P(w),
\]
so by Claims~\ref{clm:tle} and~\ref{clm:wle} we have \(V(x_0 N_0 v^*) \subseteq \Down_P(b_{k+1})\).
Hence the fact that \(W_{k+1}'\) intersects \(x_0 N_0 v^*\) implies that
\(a_{k+1} \le b_{k+1}\) in \(P\).
This contradiction completes the proof that the indices \(i \in [5n-2]\) which cross \(u^* N^* v^*\)
form a range of consecutive integers.

Therefore, there exist integers \(i_1\) and \(k_1\) with \(1 \le i_1 \le k_1 \le 5n-1\),
such that for every \(j \in [5n-2]\), the path \(W_j'\) crosses \(u^* N^* v^*\)
if and only if \(i_1 \le j < k_1\) (if no path \(W_j'\) crosses \(u^* N^* v^*\), we can take
\(i_1 = k_1 = 1\)).
In particular, if \(i_1 \le j < k_1\) holds, then \(W_j'\) intersects \(u^* N^* v^*\),
and if \(i_1 \le j < k_1\) does not hold, then \(W_j'\) intersects \(x_0 T u^*\).
Fix such \(i_1\) and \(k_1\).

By a symmetric argument, there exists witnessing paths \(W_1''\), \ldots, \(W''_{5n-1}\)
and integers \(i_2\) and \(k_2\) with \(1 \le i_2 \le k_2 \le 5n-1\)
such that each \(W_j''\) is a witnessing path from \(a_{i+1}\) to \(b_i\),
and for every \(j \in [5n-2]\), if \(i_2 \le j < k_2\) holds, then \(W_j''\) intersects
\(u^* N^* v^*\) and if \(i_2 \le j < k_2\) does not hold, then \(W_j''\) intersects
\(v^* S y_0\). Let us fix such \(W_1''\), \ldots, \(W''_{5n-1}\), \(i_2\) and \(k_2\).

Removing the elements \(i_1\), \(k_1\), \(i_2\), and \(k_2\) from the set \([5n-1]\)
yields a set which is the union of at most \(5\) ranges of consecutive integers.
As the total number of elements in these ranges is at least \(5n-5\), one of these ranges
has size at least \(n - 1\).
Hence
there exists an integer \(j_0\) with \(1 \le j_0 < j_0 + n - 2 \le 5n-1\) such that 
the sequence \(j_0\), \ldots, \(j_0 + n - 2\), does not contain any of
\(i_1\), \(k_1\), \(i_2\), and \(k_2\).
Hence, there exist witnessing paths \(W_L \in \{v^* N^* u^*, v^* N^* y_0\}\) and
\(W_R \in \{v^* N^* u^*, x_0 N^* u^*\}\) such that
for every \(j \in \{j_0, \ldots, j_0 + n - 2\}\),
the path \(W_j'\) intersects \(W_R\) and the path \(W_j''\) intersects \(W_L\).

For each \(j \in [n]\), let \(a_j' = a_{j_0 + j - 1}\) and \(b_j' = b_{j_0 + j - 1}\),
let \(c_j'\) be any element of the intersection of \(W_{j_0 + j -1}'\) with \(W_R\),
and let \(d_j'\) denote any element of the intersection of \(W_{j_0 + j -1}'\) with \(W_L\).
Let \(K'_n\) denote the subposet of \(P\) induced by the elements
\(a_1'\), \ldots, \(a_n'\),  \(b_1'\), \ldots, \(b_n'\), \(c_2'\) \ldots, \(c_{n-2}'\), \(d_2'\) \ldots, \(d_{n-2}'\). 

The elements \(c_1'\), \ldots, \(c_{n-1}'\) are pairwise comparable in \(P\) because they
all lie on \(W_R\). For each \(j \in [n-2]\) we have \(c_j' < c_{j+1}\) in \(P\) as otherwise
we would have \( a_{j+1}' \le c_{j+1}' \le c_j' \le b_{j+1}'\). Hence, we have
\(c_1' < \cdots < c_{n-1}'\) in \(P\) and by a symmetric argument, we have
\(d_{n-1}' < \cdots < d_1'\) in \(P\).
Moreover, for each \(j \in [n-1]\) we have \(a_j' \le c_j' \le b_{j+1}'\) and
\(a_{j+1} \le d_j' \le b_j'\) in \(P\).
Hence, for any pair of elements \(x\) and \(y\) in \(\Kelly{n}\), if \(x \le y\) in \(\Kelly{n}\),
then for the corresponding pair \(x'\) and \(y'\) of \(\Kelly{n}'\) we have \(x' \le y'\) in \(\Kelly{n}'\).

Furthermore, for every pair \((x, y) \in \Inc(\Kelly{n})\), \(x'\) must be incomparable with \(y'\).
Suppose to the contrary that \(y' \le x'\) in \(\Kelly{n}'\).
There exists a pair \((a_j, b_j) \in \Inc(\Min(\Kelly{n}), \Max(\Kelly{n}))\) such that
\(a_j \le y\) and \(a_j \not \le x\) in \(\Kelly{n}\). By definition of \(\Kelly{n}\), we necessarily have
\(x \le b_j\) in \(\Kelly{n}\). Hence we have \(a_j' \le y' \le x' \le b_j'\) in \(\Kelly{n}'\), a contradiction.
We can exclude the possibility that \(x' \le y'\) in \(\Kelly{n}'\) with a symmetrical argument.
This proves that \(\Kelly{n}'\) isomorphic to the Kelly poset \(\Kelly{n}\).

Finally, since the standard example \(\{(a_1', b_1'), \ldots, (a_n', b_n')\}\) is doubly
exposed by \((x_0, y_0)\), it is impossible that \(\Kelly{n}'\) contains \(x_0\) or \(y_0\).

The proof of Lemma~\ref{lem:se-vs-kelly} is complete.
\end{proof}

\section{Large Kelly subposets and outerplanarity of cover graphs}\label{sec:kelly-and-k-outerplanarity}


The proof of Lemma~\ref{lem:outerplanarity-of-kelly} consists of two main parts.
First we show that if a poset \( P \) has a Kelly subposet, then its cover graph contains the cover graph of the Kelly subposet as a minor.
Secondly we argue that having the cover graph of a sufficiently large Kelly poset as a minor prevents a planar
graph from being \( k \)-outerplanar.

Recall that the elements \(a_1\), \ldots, \(a_n\), \(b_1\), \ldots, \(b_n\),
\(c_2\), \ldots, \(c_{n-2}\), \(d_2\), \ldots, \(d_{n-2}\)
of \(\Kelly{n}\) were defined by
\[
a_i = \{i\},\quad b_i = [n]\setminus\{i\},\quad c_i = \{1, \ldots, i\},\quad d_i = \{i+1, \ldots, n\},
\]
and thus \(c_1= a_1\), \(c_{n-1} = b_n\), \(d_1 = b_1\) and
\(d_{n-1} = a_n\).

\begin{lemma}\label{lem:kelly-implies-minor}
  If \(n \ge 2\) and a poset \( P \) contains a subposet isomorphic to the Kelly poset \(\Kelly{n}\),
  then \(\cover(\Kelly{n})\) is a minor of \(\cover(P)\).
\end{lemma}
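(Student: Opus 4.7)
The plan is to start from the elementary observation that any witnessing path in $\cover(P)$ for a cover relation $u \lessdot v$ of the subposet $Q \cong \Kelly{n}$ has all of its internal vertices outside $Q$. Indeed, if $w$ were an internal vertex of such a path and $w \in Q$, then $u < w < v$ would hold in $P$, and since $Q$ is a subposet, also in $Q$, contradicting $u \lessdot v$ in $Q$. For every cover edge $uv$ of $\Kelly{n}$ I therefore pick one witnessing path $\pi_{u,v}$ in $\cover(P)$ and let $H$ denote the union of these paths. In $H$, every vertex of $Q$ appears only as an endpoint of some $\pi_{u,v}$, while all other vertices of $H$ are internal vertices of at least one $\pi_{u,v}$.

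Next I would build a minor model of $\cover(\Kelly{n})$ in $\cover(P)$ with branch sets $\{B_x\}_{x \in V(Q)}$ by initializing $B_x = \{x\}$ and then absorbing the remaining vertices of $H$. Note that the non-$Q$ vertices appearing on the $\pi_{u,v}$'s decompose, in $H - V(Q)$, into connected components $C_1, C_2, \ldots$; each $\pi_{u,v}$ has its interior contained in a single such component. My assignment rule is to attach each $C_j$ to one of its $Q$-neighbours in $\cover(P)$ (if $C_j$ has no $Q$-neighbour it can be discarded). Adding $C_j$ to $B_x$ preserves connectedness of $B_x$ because $x$ is adjacent in $\cover(P)$ to some vertex of $C_j$, and the branch sets remain pairwise disjoint since the components $C_j$ are disjoint.

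The remaining task, and the main obstacle, is to choose the assignment $C_j \mapsto x$ so that for every cover edge $uv$ of $\cover(\Kelly{n})$ there is an edge of $\cover(P)$ between $B_u$ and $B_v$. For a path $\pi_{u,v}$ whose internal vertices lie in a single component $C$, a cross-edge between $B_u$ and $B_v$ is guaranteed as long as $C$ is assigned to $B_u$ or to $B_v$ (the first or last edge of $\pi_{u,v}$ then realises the needed cross-edge), but is lost if $C$ is assigned to some third branch set. So the assignments need to be consistent across cover edges. I would exploit the fact that if two witnessing paths $\pi_{u_1,v_1}$ and $\pi_{u_2,v_2}$ share an internal vertex $w$, then $u_1 \le w \le v_1$ and $u_2 \le w \le v_2$ in $P$, hence $u_1 \le v_2$ and $u_2 \le v_1$ in $Q$; this severely restricts which pairs of cover edges may share. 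For $\Kelly{n}$, whose cover graph consists of the two chains $c_1 \lessdot \cdots \lessdot c_{n-1}$ and $d_{n-1} \lessdot \cdots \lessdot d_1$ together with the pendant attachments $a_i \lessdot c_i$, $a_i \lessdot d_{i-1}$, $c_{i-1} \lessdot b_i$, $d_i \lessdot b_i$, the allowed overlap patterns between witnessing paths are few, and by choosing the $\pi_{u,v}$'s of minimum total length I expect to be able to route each component $C_j$ greedily to a Kelly-neighbour that satisfies all of the cover-edge demands it carries.

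I anticipate the hard part to be the last step, namely showing that a compatible choice of assignment always exists. A clean way to sidestep subtle case analysis would be to refine the witnessing paths so that their interiors are internally vertex-disjoint, which would directly yield a subdivision of $\cover(\Kelly{n})$ and hence a topological minor; if that is not always achievable, then the structural description of which pairs of cover edges can share internal vertices, together with the planar ladder-like shape of $\cover(\Kelly{n})$, should still force a consistent assignment and thereby complete the proof.
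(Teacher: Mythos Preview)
Your proposal correctly sets up the framework and correctly isolates the crux (assigning each component of $H\setminus V(Q)$ to a single branch set so that every cover edge is realised), but it does not resolve that crux, and the obstruction is genuine rather than a missing routine detail. Here is a concrete instance on which your scheme fails. Take $n=4$ and let $P$ be $\Kelly{4}$ together with one new element $w$ satisfying $a_2,a_3 < w < b_1,b_4$ (recall $d_1=b_1$ and $c_3=b_4$ in $\Kelly{4}$); one checks that this adds no new comparabilities among the old elements, so $\Kelly{4}$ is still an induced subposet of $P$. In $P$ the only element strictly between $a_3$ and $c_3$ is $w$, and likewise the only element strictly between $a_2$ and $d_1$ is $w$; hence \emph{every} witnessing path for $a_3\lessdot c_3$ and \emph{every} witnessing path for $a_2\lessdot d_1$ passes through $w$. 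The component $\{w\}$ therefore carries both of these cover edges, yet $a_3c_3$ and $a_2d_1$ share no endpoint in $\cover(\Kelly{4})$, so no assignment of $\{w\}$ to a single branch set realises both. Minimising total path length does not help (there is no choice), and aiming for a subdivision does not help either (the interiors are forced to meet at $w$). The same device works for every $n\ge 4$: insert $w$ with $a_2,a_{n-1}<w<b_1,b_n$.

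The paper avoids the assignment problem by a different construction. It first fixes a single witnessing path $W_c$ running through the entire chain $c_1<\cdots<c_{n-1}$ and a single $W_d$ through $d_{n-1}<\cdots<d_1$, observes that $W_c$ and $W_d$ are vertex-disjoint (any comparability between them would force $a_1\le b_1$ or $a_n\le b_n$), and then partitions $W_c$ and $W_d$ \emph{using the order of $P$} rather than graph components: $\phi(c_i)$ consists of those vertices of $W_c$ lying in $\Up_P(a_i)\setminus\Up_P(a_{i+1})$, and dually for $\phi(d_i)$. The branch sets $\phi(a_i)$ and $\phi(b_i)$ are then the portions, outside $W_c\cup W_d$, of short witnessing paths from $a_i$ to $c_i$ and $d_{i-1}$ (respectively from $c_{i-1}$ and $d_i$ to $b_i$), where those paths are chosen to maximise their overlap with $W_c\cup W_d$. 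The idea your proposal is missing is to let the poset order slice the two long chain paths into the $\phi(c_i)$ and $\phi(d_i)$; the disjointness and adjacency checks then reduce to showing that any failure would force some forbidden relation $a_i\le b_i$.
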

\begin{proof}
  After renaming the elements of \(P\), we assume that
  not only does \(P\) have a subposet isomorphic to
  \(\Kelly{n}\), but actually \(\Kelly{n}\) is a subposet of \(P\).
  We shall construct a \emph{minor model} of \(\cover(\Kelly{n})\), that is a family
  \(\{\phi(x) : x \in \Kelly{n}\}\) of pairwise disjoint connected sets of vertices in
  \(\cover(P)\) such that
  that for every edge \(x y \in \cover(\Kelly{n})\) there
  exists an edge between \(\phi(x)\) and \(\phi(y)\) in \(\cover(P)\).
  The existence of such a model will prove the lemma because
  after contracting each set \(\phi(x)\) to the vertex \(x\)
  in \(\cover(P)\) we obtain a graph containing \(\cover(\Kelly{n})\) as a subgraph.
  
  Let \(W_c\)
  be a witnessing path from \(c_1\) to \(c_{n-1}\) in \(P\)
  which contains all elements
  \(c_1\), \ldots, \(c_{n-1}\), 
  and let
  \(W_d\) be a witnessing path from \(d_{n-1}\) to \(d_{1}\) in \(P\)
  which contains all elements \(d_{n-1}\), \ldots, \(d_{1}\).
  Observe that no vertex of \(W_c\) is comparable with a vertex
  with \(W_d\) in \(P\) as that would imply either
  \(a_1 = c_1 \le d_1 = b_1\) or \(a_n = d_{n-1} \le c_{n-1} = b_n\)
  in \(P\).
  For each pair of elements \(x\) and \(y\) of \(\Kelly{n}\), let
  \(W(x, y)\) denote a witnessing path from \(x\) to \(y\) in \(P\)
  which minimises the number of edges outside \(W_c \cup W_d\).
  As there are no comparabilities between \(V(W_c)\) and \(V(W_d)\),
  if \(W(x, y)\) intersects \(W_c \cup W_d\), then that intersection is
  a subpath of \(W_c\) or \(W_d\).
  We can now define our minor model by
  \begin{align*}
    \phi(a_i) &= (V(W(a_i, c_i)) \setminus V(W_c)) \cup
    (V(W(a_i, d_{i-1}))) \setminus V(W_d)) && \textrm{for }2 \le i \le n-1\\
    \phi(b_i) &= (V(W(c_{i-1}, b_i)) \setminus V(W_c)) \cup
    (V(W(d_i, b_{i}))) \setminus V(W_d)) && \textrm{for }2 \le i \le n-1\\
    \phi(c_i) &= (V(W_c) \cap \Up_P(a_i)) \setminus \Up_P(a_{i+1}) && \textrm{for }1 \le i \le n-1\\
    \phi(d_i) &= (V(W_c) \cap \Down_P(b_i)) \setminus \Down_P(b_{i+1}) && \textrm{for }1 \le i \le n-1.
  \end{align*}
  Note that since \(a_1 = c_1\), \(a_n = d_{n-1}\), \(b_1 = d_{n-1}\), and \(b_n = c_{n-1}\),
  \(\phi(x)\) is defined for all \(x \in \Kelly{n}\).
  It is easy to see that the sets  \(\phi(x)\) are connected in \(\cover(P)\).
  It is a tedious, yet easy task to rigorously verify that the sets \(\phi(x)\) are disjoint and
  form a minor model of \(\cover(\Kelly{n})\) (otherwise we would have \(a_i \le b_i\) in \(P\) for some
  \(i \in [n]\)).
\end{proof}

\begin{lemma}\label{lem:kelly-implies-kouterplanar}
  For every positive integer \(k\), the cover graph of \(\Kelly{4k+3}\)
  is not \(k\)-outerplanar.
\end{lemma}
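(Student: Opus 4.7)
The plan is to prove, by induction on $k$, the stronger statement: if $\cover(\Kelly{n})$ is $k$-outerplanar for some $n \ge 3$, then $n \le 4k+2$. The stated lemma follows immediately by contraposition.

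The base case $k = 0$ is vacuous, since a $0$-outerplanar graph is empty by definition, whereas $\cover(\Kelly{n})$ has $4n - 6 \ge 6$ vertices when $n \ge 3$.

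For the inductive step, suppose the claim holds for $k-1$ and that $G := \cover(\Kelly{n})$ admits a $k$-outerplanar drawing. Let $V_1$ be the set of vertices on the outer face; the induced drawing of $G - V_1$ is $(k-1)$-outerplanar. Since the class of $(k-1)$-outerplanar graphs is minor-closed by Lemma~\ref{lem:k-outerplanarity-minor-closed}, it suffices to exhibit $\cover(\Kelly{n-4})$ as a minor of $G - V_1$: the inductive hypothesis then forces $n - 4 \le 4(k-1) + 2 = 4k - 2$, i.e., $n \le 4k+2$.

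To construct the minor, I would use an explicit ``shift-by-two'' model $\phi$, sending the interior chain vertices via $c_i' \mapsto c_{i+2}$, $d_i' \mapsto d_{i+2}$, the interior pendants via $a_i' \mapsto a_{i+2}$, $b_i' \mapsto b_{i+2}$, and collapsing each of the four boundary identifications of $\Kelly{n-4}$ onto a single vertex of $G$ (for instance $a_1' = c_1' \mapsto c_3$, using that in $G$ the vertex $c_3$ is adjacent to both $c_4 = \phi(c_2')$ and $b_4 = \phi(b_2')$, matching the two neighbours of $c_1'$ in $\cover(\Kelly{n-4})$; analogous choices handle $b_1' = d_1' \mapsto d_3$, $a_{n-4}' = d_{n-5}' \mapsto d_{n-3}$, $b_{n-4}' = c_{n-5}' \mapsto c_{n-3}$). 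A routine edge-by-edge verification confirms that every cover edge of $\Kelly{n-4}$ is realised by an edge of $G$ between the corresponding image sets, so $\phi$ is a valid topological minor model whose entire image lies in $I := \{c_i, d_i : 3 \le i \le n-3\} \cup \{a_i, b_i : 3 \le i \le n-3\}$.

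The hard part will be proving that the image $I$ is disjoint from $V_1$, in every planar drawing of $G$. Because $G$ is not $3$-connected—every $a_i, b_i$ is a degree-$2$ vertex creating a $2$-separator—the embedding is far from unique, so $V_1$ could \emph{a priori} contain interior chain vertices. I plan to handle this via the SPQR decomposition: after suppressing all degree-$2$ vertices, the resulting $3$-connected planar multigraph on $\{c_i, d_i : 2 \le i \le n-2\}$ has a unique embedding up to reflection by Whitney's theorem, and a short case analysis of its face structure (bound by the two chains and the crossing chord-pairs $c_{i-1} d_i$ and $c_i d_{i-1}$ obtained from the $a_i, b_i$ suppressions) shows that, after possibly re-choosing the outer face and reflecting, the outer face of $G$ itself can contain only vertices with index in $\{1, 2\} \cup \{n-2, n-1\}$, leaving $I$ intact. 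The middle sub-case, where the outer face of $G$ would be a ``middle hexagon'' $H_j$, is excluded because peeling such a hexagon would force the radial depth of the drawing to be at least roughly $n/2 > k$, violating the assumption that $G$ is $k$-outerplanar.
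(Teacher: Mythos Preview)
Your inductive strategy is natural, but the step you flag as ``the hard part'' is a genuine gap, and the sketch you give does not close it.

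First, the structural claims about the suppressed graph are shaky. After suppressing the degree-$2$ vertices $a_i,b_i$, the endpoints $c_1,c_{n-1},d_1,d_{n-1}$ still have degree $2$, so you are not yet at a $3$-connected core; further suppressions introduce multi-edges at the ends and the analysis gets delicate. More importantly, even once you do reach a $3$-connected core, Whitney's theorem only pins down the combinatorial embedding; it says nothing about which face is the outer one. Every face of $\cover(\Kelly{n})$ can be chosen as the outer face of some planar drawing, and many of those faces are ``middle hexagons'' whose vertex set meets your set $I$. So the assertion that $V_1$ avoids $I$ is simply false for a general planar drawing.

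You try to rescue this by saying that if the outer face is a middle hexagon then the radial depth exceeds $k$. But that is precisely the statement you are trying to prove: why should peeling from a middle face take many rounds? To justify it you would have to argue that the remaining hexagons on either side are \emph{nested}, so that each peel removes at most one of them. That nesting argument is exactly what the paper does directly, and once you have it, the induction and the minor model are superfluous.

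Compare with the paper's proof: it writes down $2k+1$ pairwise disjoint $6$-cycles $C_i=G[\{a_{2i},c_{2i},c_{2i-1},b_{2i},d_{2i},d_{2i-1}\}]$ and observes that, because the attachments of $C_{i-1}$ and $C_{i+1}$ to $C_i$ interleave along $C_i$, one of them lies strictly inside and the other strictly outside the region bounded by $C_i$. This forces the cycles to be linearly nested in any planar drawing, so the innermost one survives $k$ rounds of peeling. That single paragraph replaces your entire embedding analysis and works uniformly regardless of which face is outer.
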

\begin{proof}
  Let \(G = \cover(\Kelly{4k+3})\) and
  fix a planar drawing of \(G\).
  For every \(i \in [2k+1]\), we denote by \(C_i\) the cycle in \(G\) defined by
  \[
    C_i = G[\{a_{2i}, c_{2i}, c_{2i-1}, b_{2i}, d_{2i}, d_{2i-1}\}].
  \]
  The cycles \(C_1\), \ldots, \(C_{2k+1}\)
  are pairwise disjoint.
  For every \(i \in \{2, \ldots, 2k\}\),
  the cycle \(C_{i-1}\) is adjacent to the vertices \(c_{2i-1}\) and
  \(d_{2i-1}\) of \(C_i\), and the cycle \(C_{i+1}\) is adjacent to the vertices
  \(c_{2i}\) and \(d_{2i}\) of \(C_{i}\).
  Since the vertices \(c_{2i}\), \(c_{2i-1}\), \(d_{2i}\), and \(d_{2i-1}\) lie on \(C_i\) in that
  cyclic order, we conclude that one of the cycles \(C_{i-1}\) and \(C_{i+1}\) has all its vertices
  in the interior of the region bounded by \(C_i\) and the other one has all its
  vertices outside the region bounded by \(C_i\).
  
  \begin{figure}[ht]
    \centering
    \includegraphics{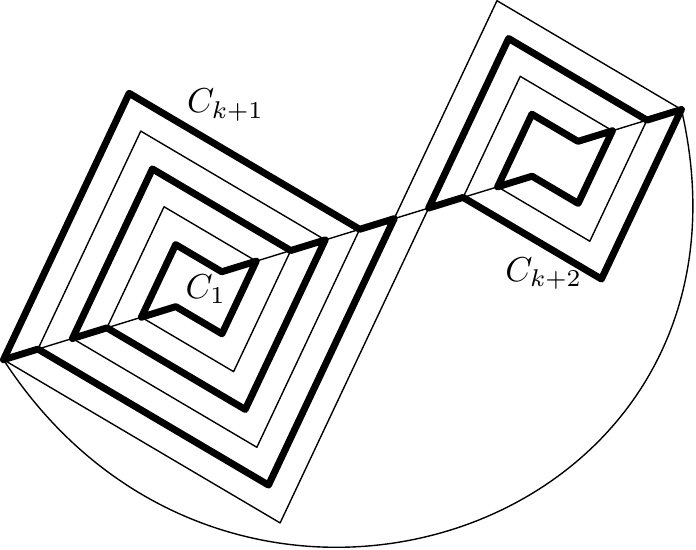}
    \caption{A planar drawing \(\cover(\Kelly{11})\). The bold cycles
    \( C_1 \), \(C_2 \), and \( C_3 \),
    are nested, witnessing that this drawing is not \(2\)-outerplanar.}
    \label{fig:kellycycles}
  \end{figure}
  
  Consider the cycle \(C_{k+1}\). 
  Without loss of generality we assume that
  the \(C_k\) has all its vertices in the region bounded by \(C_{k+1}\).
  A simple induction shows that for every \(i \in [k]\), \(C_i\) lies in the interior of the region bounded by
  \(C_{i+1}\).
  This implies that  the \(k\)-fold removal of vertices from the exterior face leaves
  the vertices of the cycle \(C_1\) intact.
  Since the planar drawing was chosen arbitrarily, we deduce that \(G\) is not \(k\)-outerplanar, as claimed.
\end{proof}

It is now easy to provide a proof of Lemma~\ref{lem:outerplanarity-of-kelly}.
\begin{proof}[Proof of Lemma~\ref{lem:outerplanarity-of-kelly}]
  Let \( G \) be the \( k \)-outerplanar cover graph of the poset \( P \).
  Due to Lemma~\ref{lem:kelly-implies-minor}, this implies that \( \kappa(P) \le 4k + 2 \).
  Otherwise \( \cover(K_{4k + 3}) \) is a minor of \( G \), contradicting Lemma~\ref{lem:kelly-implies-kouterplanar}.
\end{proof}

\bibliographystyle{plain}
\bibliography{main}

\begin{thebibliography}{10}

\bibitem{biro2016posets}
Csaba Bir{\'o}, Mitchel~T Keller, and Stephen~J Young.
\newblock Posets with cover graph of pathwidth two have bounded dimension.
\newblock {\em Order}, 33(2):195--212, 2016.

\bibitem{felsner2015dimension}
Stefan Felsner, William~T Trotter, and Veit Wiechert.
\newblock The dimension of posets with planar cover graphs.
\newblock {\em Graphs and Combinatorics}, 31(4):927--939, 2015.

\bibitem{huynh2020excluding}
Tony Huynh, Gwenaël Joret, Piotr Micek, Michał~T. Seweryn, and Paul Wollan.
\newblock Excluding a ladder, 2020.

\bibitem{joret2017dimension}
Gwena{\"e}l Joret, Piotr Micek, William~T Trotter, Ruidong Wang, and Veit
  Wiechert.
\newblock On the dimension of posets with cover graphs of treewidth 2.
\newblock {\em Order}, 34(2):185--234, 2017.

\bibitem{joret2017planar}
Gwena{\"e}l Joret, Piotr Micek, and Veit Wiechert.
\newblock Planar posets have dimension at most linear in their height.
\newblock {\em SIAM journal on discrete mathematics}, 31(4):2754--2790, 2017.

\bibitem{joret2018sparsity}
Gwena{\"e}l Joret, Piotr Micek, and Veit Wiechert.
\newblock Sparsity and dimension.
\newblock {\em Combinatorica}, 38(5):1129--1148, 2018.

\bibitem{kelly1981dimension}
David Kelly.
\newblock On the dimension of partially ordered sets.
\newblock {\em Discrete Mathematics}, 35(1-3):135--156, 1981.

\bibitem{kozik2019dimension}
Jakub Kozik, Piotr Micek, and William~T Trotter.
\newblock Dimension is polynomial in height for posets with planar cover
  graphs.
\newblock {\em arXiv preprint arXiv:1907.00380}, 2019.

\bibitem{seweryn2020improved}
Micha{\l}~T Seweryn.
\newblock Improved bound for the dimension of posets of treewidth two.
\newblock {\em Discrete Mathematics}, 343(1):111605, 2020.

\bibitem{streib2014dimension}
Noah Streib and William~T Trotter.
\newblock Dimension and height for posets with planar cover graphs.
\newblock {\em European Journal of Combinatorics}, 35:474--489, 2014.

\bibitem{trotter1977dimension}
William~T Trotter~Jr and John~I Moore~Jr.
\newblock The dimension of planar posets.
\newblock {\em Journal of Combinatorial Theory, Series B}, 22(1):54--67, 1977.

\bibitem{wiechert2017}
Veit Wiechert.
\newblock {\em Cover Graphs and Order Dimension}.
\newblock PhD thesis, Technical University of Berlin, 2017.
\newblock
  \url{https://depositonce.tu-berlin.de/bitstream/11303/6248/5/wiechert_veit.pdf}.

\end{thebibliography}

\end{document}